\documentclass[12pt]{amsart}
\usepackage{amsfonts}
\usepackage{mathrsfs}
\usepackage{bbm}%
\usepackage{bm}
\usepackage{bbding}
\usepackage[mathcal]{euscript}
\usepackage{graphicx}
\usepackage{amsthm,amscd}
\usepackage{calc}
\usepackage{mathtools}
\usepackage{mathrsfs,dsfont}
\usepackage{CJK,fancyhdr,amscd}
\usepackage[x11names]{xcolor}
\usepackage{anysize}
\usepackage{amsmath,amssymb,amsfonts}
\usepackage{epsfig,enumerate}
\usepackage{latexsym}
\usepackage{indentfirst, latexsym}
\usepackage{graphics}
\usepackage{tikz-cd}
\usepackage{lineno}
\usepackage[all,poly,knot]{xy}
\usepackage{xpatch}
\usepackage{hyperref}

\usepackage[T1]{fontenc}
\usepackage{textcomp}
\usepackage[utf8]{inputenc}

\hypersetup{
	colorlinks=true,
	linkcolor=Magenta4,
	citecolor=Blue2,
	urlcolor=Chartreuse3
}

\xpatchcmd{\step}{%
	\normalfont\scshape\centering}{%
	\normalfont\scshape}{\typeout{Success}}{\typeout{Failure}}%

\allowdisplaybreaks[2]

\providecommand{\U}[1]{\protect\rule{.1in}{.1in}}

\numberwithin{equation}{section}

\newtheorem{theorem} {Theorem} [section]
\newtheorem{proposition}[theorem]{Proposition}
\newtheorem{corollary}  [theorem]     {Corollary}
\newtheorem{lemma}  [theorem]     {Lemma}
\theoremstyle{definition}
\newtheorem{example}  [theorem]     {Example}
\newtheorem{question}  [theorem]     {Question}

\newtheorem{step}{Step}
\theoremstyle{definition}
%\addtocounter{step}{-1}

\newtheorem{definition}  [theorem]     {Definition}

\newtheorem{Construction}  [theorem]     {Construction}

 \pagestyle{plain}
\theoremstyle{definition}
\newtheorem{remark}  [theorem]     {Remark}

\newcommand{\dz}{d \bar{z}}

\newcommand{\w}{\wedge}
\newcommand{\im}{\mathrm{im}}
\renewcommand{\1}{\mathsf{1}}
\newcommand{\G}{\mathbb{G}}
\newcommand{\db}{\overline{\partial}}
\newcommand{\dbs}{\overline{\partial}^*}

\newcommand{\lc}{\lrcorner}

\newcommand{\lk}{\left(}
\newcommand{\rk}{\right)}

\newcommand{\Om}{\Omega}
\newcommand{\btheorem}{\begin{theorem}}
	\newcommand{\etheorem}{\end{theorem}}
\newcommand{\bproposition}{\begin{proposition}}
	\newcommand{\eproposition}{\end{proposition}}
\newcommand{\bdefinition}{\begin{definition}}
	\newcommand{\edefinition}{\end{definition}}
\newcommand{\bcorollary}{\begin{corollary}}
	\newcommand{\ecorollary}{\end{corollary}}
\newcommand{\bproof}{\begin{proof}}
	\newcommand{\eproof}{\end{proof}}
\newcommand{\beq}{\begin{equation}}
	\newcommand{\eeq}{\end{equation}}
\newcommand{\ee}{\end{eqnarray*}}
\newcommand{\be}{\begin{eqnarray*}}

\newcommand{\elemma}{\end{lemma}}
\newcommand{\blemma}{\begin{lemma}}

\renewcommand{\>}{\rightarrow}
\newcommand{\sL}{{\mathcal L}}
\newcommand{\p}{\partial}
\newcommand{\bp}{\overline\partial}
\renewcommand{\H}{\mathbb H}

\newcommand{\bd}{\begin{enumerate} }
	\newcommand{\ed}{\end{enumerate}}

\setlength{\hoffset}{0in} \setlength{\voffset}{0in}
\setlength{\oddsidemargin}{0in} \setlength{\evensidemargin}{0in}
\setlength{\marginparsep}{0in} \setlength{\topmargin}{0in}
\setlength{\headheight}{0in} \setlength{\headsep}{0in}
\setlength{\footskip}{0.375in} \setlength{\textwidth}{6.268in}
\setlength{\textheight}{9.693in}

\def\p{\partial}

\def\b{\bar}
\def\mb{\mathbb}
\def\mc{\mathcal}

\def\v{\varphi}
\def\w{\wedge}

\def\l{\lrcorner}
\newtheorem*{Conventionno}{Convention}
\theoremstyle{plain} % 确保正文是斜体
\newtheorem{bigthm}{Theorem}

\theoremstyle{plain} % 确保正文是斜体
\newtheorem{bigque}{Question}

% 定理环境 bigcor，编号用大写字母
\theoremstyle{plain} % 确保正文是斜体
\newtheorem{bigcor}{Corollary}

\begin{document}
	\title{On extension of closed complex (basic) differential forms: (basic) Hodge numbers and (transversely) $p$-K\"ahler structures}
	\let\oldmaketitle\maketitle
	\renewcommand\maketitle{{\bfseries\boldmath\oldmaketitle}}	
	
	\begin{abstract}
		Inspired by a recent work of D. Wei--S. Zhu on the extension of  closed complex differential forms and Voisin's usage of the $\p\bp$-lemma, we obtain several  new  theorems of deformation invariance of Hodge numbers and reprove the local stabilities of $p$-K\"ahler structures with the $\p\bp$-property. Our approach is more concerned with the $d$-closed extension by means of the  exponential operator $e^{\iota_\v}$.
		
		 Furthermore, we prove the local stabilities of transversely $p$-K\"ahler structures with mild $\p\bp$-property by adapting the power series method to the foliated case, which strengthens the works of A. El Kacimi Alaoui--B. Gmira and P. Ra\'zny on that of the transversely K\"ahler foliations with homologically orientability. We observe that a transversely K\"ahler foliation, even without homologically orientability, also satisfies the $\p\bp$-property. So even when $p=1$ (transversely K\"ahler), our results are new as we can drop the  assumption in question on the initial foliation. Several theorems  on the deformation invariance of basic Hodge/Bott--Chern numbers with mild $\p\bp$-properties are also presented.
	\end{abstract}
	
	\author{Sheng Rao}
	\address{School of Mathematics and statistics, Wuhan  University,
		Wuhan 430072, China}
	\email{likeanyone@whu.edu.cn}
	\thanks{Both authors are partially supported by NSFC (Grant No. 12271412, W2441003) and Hubei Provincial Innovation Research Group Project (Grant No. 2025AFA044).}
	\dedicatory{In Memory of Jean-Pierre Demailly (1957-2022)}
	
	\author{Runze Zhang}
	\address{School of Mathematics and statistics, Wuhan  University,
		Wuhan 430072, China; Universit\'e C\^ote d'Azur, CNRS, Laboratoire J.-A. Dieudonn\'e, Parc Valrose, F-06108 Nice Cedex 2, France}
	\email{runze.zhang@unice.fr}

	%\linenumbers
	\date{\today}
	
	\subjclass[2010]{32G05, 53C12(primary), 13D10, 14D15, 53C55 (secondary).}
	\keywords{Deformations of complex structures, deformations and infinitesimal methods, formal methods, deformations, Hermitian and
		K\"ahlerian manifolds, foliations.}
	
	\maketitle
	
	\setcounter{tocdepth}{1}
	\tableofcontents
	
	\section{Introduction}\label{Introduction}
	The classical deformation theory of compact complex manifolds, developed by Kodaira--Nirenberg--Spencer and Kuranishi \cite{KNS58,KS60,Ku64,Ko86}, intensively studies the complex structures `close to' a given one. Subsequently, the theory had been  extended to the case of complex pseudogroup structures such as \cite{Ko60,KS61}. In particular, many excellent works concerning the deformations  of transversely holomorphic foliations and holomorphic foliations  appear, see \cite{DK79,DK80,GM81,GHS83,EKA88,EKAN89,GN89,Gb92,EKAG97,BHTF21,Ra21}, etc.  Most of these works essentially dealt with the extension of closed complex (basic) differential forms.
	
	As an important and direct application, one considers the deformation invariance of Hodge numbers. It becomes a useful tool in the study of the deformation limit problems. For instance, let $\pi:\mathcal{X}\rightarrow \Delta$ be a holomorphic family over an open disk in $\mb C$. 
    D. Popovici \cite{Po13} (resp. D. Barlet \cite{Ba15}) proved that if all fibers $X_{t}$ are projective (resp. Moishezon) for $0\neq t\in\Delta$ and $X_{0}$ satisfies the deformation invariance for Hodge number of type $(0,1)$ or admits a strongly Gauduchon metric, then $X_{0}$ is Moishezon. The first author--I-H. Tsai \cite{RT21} prove that if there exist uncountably many Moishezon fibers in the family, then any fiber with either of the above two conditions is still Moishezon.
	
It is well known that each Hodge number
	takes a constant value along the small differentiable deformation $X_t$ of $X_0$ when the central fiber $X_0$ satisfies the (standard) $\p\bp$-property
	or more generally, the Fr\"{o}licher spectral sequence of $X_0$ degenerates at the $E_1$-level, cf. \cite[Section
	5.1]{Gs} or \cite[Proposition 9.20]{Vo02}. Recall that the \textit{(standard)  {$\p\b{\p}$-property}} refers
	to: for every pure-type $d$-closed form on a compact complex manifold, the properties of
	$d$-exactness, $\p$-exactness, $\b{\p}$-exactness and
	$\p\b{\p}$-exactness are equivalent. So it is natural to study this topic with more general conditions on $X_0$, such as some `weak' $\p\bp$-properties  see Subsection \ref{section Variations ofddbar}.  Such results were first given in \cite{ZR15,RZ18}. Significantly,  the weak version of $\p\bp$-property was initially introduced in \cite{FY11} while investigating deformations of balanced manifolds. Recently, W. Xia studied this further in terms of canonical deformations \cite[Theorem 1.3]{Xi21b}, see also a much recent work \cite{WX23} of Wan--Xia.  
    
    Drawing on  Xia's work and a  recent work of Wei--Zhu \cite{WZ20}, we obtain several new  theorems on the deformation invariance of Hodge numbers:
	
	\begin{bigthm}\label{pq def}
		Let $\pi:\mathcal{X} \rightarrow
		B$  be	a differentiable family of compact complex $n$-dimensional manifolds
		over a sufficiently small domain in $\mathbb{R}^d$  as in Definition \ref{Differential family} with the
		central  fiber $X_0:= \pi^{-1}(0)$ and the general fibers $X_t:=\pi^{-1}(t)$.
		Consider the  function
		\begin{equation}\label{hodge number equation}
			B \ni t\longmapsto h^{p,q}_{\db_t}(X_t):=\dim_{\mathbb{C}}H^{p,q}_{\db_t}(X_t),\quad \text{for any non-negative integers $p,q\leq n$}.
		\end{equation}
		If the injectivity of the mapping $\iota_{\mathrm {BC},\p}^{p,q+1}$, the surjectivity of the mapping $\iota_{\mathrm {BC},\bp}^{p,q}$ on the central fiber
		$X_0$ and the deformation invariance of the $(p,q-1)$-Hodge number $h^{p,q-1}_{\db_t}(X_t)$ hold,
		then $h^{p,q}_{\db_t}(X_t)$ are independent of $t$. See the  notations $\iota_{\mathrm {BC},\p}^{p,q+1},$ $\iota_{\mathrm {BC},\bp}^{p,q}$ in Subsection \ref{section Variations ofddbar}.
\end{bigthm}
	
	Examples \ref{example 1}, \ref{example 2}, and \ref{example 3} show  that the deformation invariance of the $(p,q)$-Hodge number fails when one of the three conditions in Theorem \ref{pq def} is not satisfied, while the other two hold, thanks to the  Kuranishi family of the Iwasawa manifold (see \cite[Appendix]{Ag13} and \cite[Section 1.c]{Sc07}).
	
	Following \cite[Notation 3.5]{RZ18}, we say that a compact complex manifold $X$ satisfies $\mathbb{S}^{p,q}$
	(resp. $\mathbb{B}^{p,q}$), if for any $\db$-closed
	$\p g\in A^{p,q}(X)$, the equation
	\begin{equation}\label{555555}
		\db x = \p g
	\end{equation}
	has a solution
	(resp. a $\p$-exact solution) of pure-type complex differential form. Similarly,
	a compact complex manifold $X$ is said to satisfy
	$\mathcal{S}^{p,q}$
	(resp. $\mathcal{B}^{p,q}$), if
	for any $\db$-closed $g\in A^{p-1,q}(X)$, the equation (\ref{555555}) has a solution
	(resp. a $\p$-exact solution) of pure-type complex differential form.
	
	The special cases of the $(p,0)$  and $(0,q)$  types may allow for a weakening of the conditions in Theorem~\ref{pq def}:
	\begin{bigthm}[=Theorems \ref{inv-p0}+\ref{inv-0q}]\label{inv-p0-0q-intro}With the setting of Theorem \ref{pq def},
		\begin{enumerate}[$(1)$]
			\item \label{inv-p0-intro}
			if $X_0$ satisfies $\mathbb{B}^{p,1}$ (i.e., the mapping $\iota_{\mathrm {BC},\p}^{p,1}$ is injective) and $\mathcal{S}^{p+1,0}$, then $h^{p,0}_{\db_t}(X_t)$ are deformation invariant;
			\item \label{inv-0q-intro} if $X_0$ satisfies $\mathcal {B}^{1,q}$ (i.e., the mapping $\iota^{0,q}_{\mathrm {BC},\db}$ is surjective)
			and $h^{0,q-1}_{\db_t}(X_t)$  satisfies the deformation invariance, then
			$h^{0,q}_{\db_t}(X_t)$ are independent of $t$.
		\end{enumerate}
	\end{bigthm}
	
	Note  that Theorem \ref{inv-p0-0q-intro} (\ref{inv-0q-intro}) first appeared as \cite[Theorem 3.7]{RZ18}. So we can also obtain \cite[Remark 3.8, Corollary 3.9]{RZ18}  as a consequence. Furthermore, an example applicable to  Theorem  \ref{inv-p0-0q-intro} (\ref{inv-p0-intro}) in Remark \ref{researchgate} shows that the function of Theorem  \ref{inv-p0-0q-intro} (\ref{inv-p0-intro}) for $(p,0)$-Hodge numbers goes beyond Kodaira--Spencer's squeeze \cite[Theorem 13]{KS60} sometimes.
	
	Before setting out the strategy to prove Theorems \ref{pq def} and \ref{inv-p0-0q-intro}, we first briefly state some knowledge of analytic deformation theory of complex structures to be introduced in detail in Subsection \ref{subsection Deformation theory and extension equ }. Let $\pi:\mathcal{X} \rightarrow B$ be a differentiable family as aforementioned inducing a canonical differentiable family of \textit{integrable Beltrami differentials} on $X_0$, denoted by $\varphi(z,t),$ $\varphi{(t)},$ and $\varphi$ interchangeably.

Now  consider the \textit{exponential operator} of contraction operators
	$$e^{\iota_\v}:=\sum_{k=0}^{\infty}\frac{{\iota^k_{\v(t)}}}{k!},$$
	and one can show that
	$$
	e^{\iota_{\v}}: A^{p, 0}(X_0) \longrightarrow A^{p, 0}(X_{t}).
	$$However, $e^{\iota_{\v}}$ can't preserve $(p,q)$-forms  to $X_t$ when $1\leq q\leq n$.  A useful fact due to  \cite[Theorem 5.1]{FM09} or \cite[Section 3.1]{WZ20} shows that it actually maps from  a filtration on $X_0$ to one on $X_t$, that is
	$$e^{\iota_{\varphi}}: F^{p} A^{k}(X_0)=\mathop\bigoplus_{p \leq  s \leq k} A^{ s, k- s}(X_0) \longrightarrow F^{p} A^{k}(X_{t}).$$
	According to \cite{WZ20,Xi21b}, one can define the \textit{projection operator}
	$$	\mathcal P_{\v}:A^{0,1}(X_0)\longrightarrow A^{0,1}(X_t).$$
	
We then use  the exponential operator
	$e^{\iota_\v}$ and the projection operator $\mathcal P_\v$ to define the \textit{extension operator}
	\begin{equation}\label{filtration}\rho_\varphi: A^{p,q}(X_0)\longrightarrow A^{p,q}(X_t)
	\end{equation}
	as in \eqref{ccccc}.
	To prove the deformation invariance of Hodge numbers,  the first author--Q. Zhao \cite{ZR15,RZ18} introduced an extension map
	\begin{equation}\label{RAozhao}
		e^{\iota_{\varphi}|\iota_{\overline{\varphi}}}:
		A^{p,q}(X_0)\longrightarrow A^{p,q}(X_t),
	\end{equation} which can preserve all $(p,q)$-forms and played an important role in their subsequent papers \cite{RWZ19,RWZ21}. By comparing the explicit local expressions,  one can deduce the relationship between these  two extension maps:
	$$\rho_{\varphi}=e^{\iota_{\varphi}|\iota_{\overline{\varphi}}}\circ{\left(\1-\overline\varphi\varphi\right)}^{-1}\Finv,$$
	where the notation $\Finv$  denotes the
	simultaneous contraction on each component of a complex
	differential form.

	An application of Kuranishi's completeness theorem \cite{Ku64} can reduce our Theorems \ref{pq def} and \ref{inv-p0-0q-intro} to the Kuranishi family, which contains all sufficiently small differentiable deformations of $X_0$ in some sense (see Subsection \ref{subsection-Kuranishi family}). So to prove them, we just need to consider a Kuranishi family of deformations of $X_0$ over $\Delta_{\epsilon}:=\left\{t\in\mb C\,|\,|t|\leq\epsilon\right\}$ with small $\epsilon$, and there exists a family of integrable Beltrami differentials $\left\{\varphi(t)\right\}_{|t|\leq\epsilon} $ depending on $t$ holomorphically and describing the variations of complex structures on $X_0.$
	
	Recently, Wei--Zhu \cite{WZ20} applied the $\p\bp$-Hodge theory to extend a $d$-closed $(p,q)$-form on $X_0$ to a $d$-closed filtrated $(p+q)$-form on $X_t$, whose  $(p,q)$-part on $X_t$ is $\bp_{t}$-closed via the extension operator $\rho_\varphi$ in (\ref{filtration}).  This is surely an interesting and important result. We will reprove their theorem in Subsection \ref{important}, from the perspective of Bott--Chern theory.
	
	\begin{bigthm}[{\cite[Theorem 1.1]{WZ20}}]\label{main thm}
		Let  $X_0$ be a compact complex manifold that  satisfies   $\mathbb B^{p,q+1}$. Given a $d$-closed $\mu_{0} \in A^{p, q}(X_0)$, one can construct $\mu(z,t)\in A^{p,q}(X_0)$ satisfying $d(e^{\iota_\varphi}(\mu(z,t)))=0$ on $X_0$ (or $X_t$) and $\mu(z,0)=\mu_0(z)$, which is  holomorphic in small $t$. Furthermore, the extension $\rho_\v(\mu(z,t)) \in A^{p, q}\left(X_{t}\right)$
		is $\bar{\partial}_{t}$-closed.
	\end{bigthm}
	
	Now we are ready to describe our strategy to consider the deformation invariance of Hodge numbers briefly. The Kodaira--Spencer's upper semi-continuity theorem (\cite[Theorem $4$]{KS60}) tells us that the function \eqref{hodge number equation} is always upper semi-continuous for $t\in \Delta_{\varepsilon}$ and thus, to approach the deformation invariance of $h^{p,q}_{\db_t}(X_t)$, we only need to obtain the lower semi-continuity. 
    
    And, our main strategy is to  look for an injective extension map from $H^{p,q}_{\db}(X_0)$ to $H^{p,q}_{\db_t}(X_t)$ with the help of Theorem \ref{main  thm}. More precisely,  we first need to find a nice uniquely-chosen $d$-closed representative $\mu_0$ of the given  initial Dolbeault cohomology class in $H^{p,q}_{\db}(X_0)$ (see Lemma \ref{lemma1}), and then apply power series method and Bott--Chern theory to construct  $\mu(z,t)\in A^{p,q}(X_0)$, such that it is smooth in $(z,t)$ and holomorphic in small $t$ and $\rho_\v(\mu(z,t))$ is $\bar{\partial}_{t}$-closed in $A^{p, q}\left(X_{t}\right)$ by Theorem \ref{main  thm}. Finally, we try to verify that the extension map  $$H^{p,q}_{\db}(X_0) \longrightarrow H^{p,q}_{\db_t}(X_t):[\mu_0]_{\db} \longmapsto [\rho_\v(\mu(z,t))]_{\db_t}$$ is injective.
	
	Our Theorem \ref{pq def} and the first assertion of Theorem \ref{inv-p0-0q-intro} on the deformation invariance of Hodge numbers are different from the results in \cite{RZ18}, cf.  Remarks \ref{example remark}, \ref{Remark 1}, and \ref{Remark 2}. Our approach focuses more on the $d$-closed extension, while they concentrated on the specific $\bp$-extension from $A^{p,q}(X_0)$ to $A^{p,q}(X_t)$ (see \cite[Proposition 1.2]{RZ18}) by use of their extension map \eqref{RAozhao}.
	
	As another application of the extension of closed complex differential forms in Theorem \ref{main thm}, we study the local stabilities of several special complex structures. Inspired by the proof of \cite[Theorem 9.23]{Vo02}, we can take advantage of Theorem \ref{main thm}  and also the deformation openness of the $\p\bp$-property to prove the local stabilities of \textit{$p$-K\"{a}hler structures} (for this concept, one can refer to Appendix \ref{p} for more details) with the  $\p\bp$-property.

	\begin{bigthm}[{\cite[Theorem 4.9]{RWZ21}}]\label{pkahler introduction}
		For any positive integer $p\leq n-1$, any
		small differentiable deformation $X_t$ of a
		$p$-K\"ahler manifold $X_0$ satisfying the $\p\db$-property is still
		$p$-K\"ahlerian.
	\end{bigthm}

	Notice that \cite{RWZ21} presented a power series proof for Kodaira--Spencer's local stabilities theorem of K\"{a}hler structures via their extension map (\ref{RAozhao}), which is a problem at latest dated back to \cite[Remark 1 on p. 180]{MK71}: 	`A
	good problem would be to find an elementary proof (for example,
	using power series methods). Our proof uses nontrivial results from
	partial differential equations'. In our proof, our primary focus is on the $d$-closed extension, by means of the exponential operator $e^{\iota_\v}$, which is more natural and succinct in some sense. However, we still need to use \cite[Theorem 7]{KS60} to guarantee the positivity of the constructed explicit $p$-K\"ahler form, see Subsection \ref{proof pkahler} for more details. A challenge problem proposed by \cite{WZ20} is how to prove
	a direct corollary of Theorem \ref{pkahler introduction}, that is Corollary \ref{cor-mt} \eqref{stab-Kahler},  without using \cite[Theorem 7] {KS60}.
	
	It is worth mentioning that in \cite{RWZ19}, the first author--X. Wan--Q. Zhao looked deeper into the local stabilities of $p$-K\"{a}hler structure when the central fiber $X_0$ satisfies some `weak' $\p\bp$-properties. More concretely, for any positive integer $p\leq n-1$, any
	small differentiable deformation $X_t$ of  an $n$-dimensional
	$p$-K\"ahler manifold $X_0$ satisfying  the
	$(p,p+1)$-th mild $\p\db$-property is still
	$p$-K\"ahlerian (\cite[Theorem 1.1]{RWZ19}). However, in our approach, it seems that  the standard $\p\bp$-property condition on $X_0$ in Theorem \ref{pkahler introduction} can't be weakened, see Remark \ref{weaken remark}.
	
	As is well known,  the Bott--Chern numbers are  always upper semi-continuous with respect to the ordinary topology in a small differentiable family. Since the ordinary topology is much finer than  the analytic Zariski topology, it's natural to ask:
	\begin{bigque}[{=Question \ref{question}}] \label{question!!!}
		For a holomorphic family $\{X_t\}$,
		is the function
		$$B\ni t\longmapsto h^{p,q}_{\mathrm {BC}}(X_t):=\dim_{\mathbb{C}}H^{p,q}_{\mathrm {BC}}(X_t)$$
		upper semi-continuous with respect to the analytic Zariski topology?
	\end{bigque}
	
	To solve this question,  we wish to use Grauert's upper semi-continuity theorem \ref{Upper semi-continuity}. In other words, one needs to find some holomorphic vector bundle $V$ on $\mathcal X$, such that  $H^{p,q}_{\mathrm {BC}}(X_t)\simeq H^q(X_t,V|_{X_t}).$  This seems hard to achieve due to the results in \cite{Bg69,AN71}, see Subsection  \ref{further} for more details.  Recently,  Xia \cite[Theorem 1.1 and Remark 3.6]{Xi21a} confirmed Question \ref{question!!!} when the type is $(p,0)$ or $(0,q)$. One motivation to affirm  Question \ref{question!!!} is to obtain Theorem \ref{uncontable}, as long as one notices that the $\p\bp$-property in Theorem \ref{pkahler introduction} actually can be replaced by the deformation invariance of $(p,p)$-Bott--Chern numbers as shown in \cite[Remark 4.13]{RWZ21}.

	%\vspace{1em}
	As is widely recognized, the transversely K\"ahler structures hold a central position within the field of foliation theory and   are closely linked to a wealth of geometric structures.  For instance, Vaisman manifolds, {LVM} manifolds (a generalized version of Calabi--Eckmann manifolds), and Sasakian manifolds all possess transversely K\"ahler structures despite not being K\"ahler themselves, see \cite{Me00,BG08,MSY08,No08,FOW09,OV22}, etc. 
    
    Recently, the transversely balanced structures (or, more generally, the transversely Gauduchon structures, see Remark \ref{nvwa}), are also actively studied, as evidenced by  \cite{FZ19,BH22}, etc. In light of this, it naturally becomes a logical progression to extend the concept of compact $p$-K\"ahler manifolds, as originally introduced by L. Alessandrini--M. Andreatta \cite[Definition 1.11]{AA87}, to the transverse context. This extension leads us to the introduction of \textit{transversely $p$-K\"ahler foliations} (Definition \ref{TRAN P-Kahler}). Remarkably, this overarching notion unifies the two aforementioned structures, specifically when $p$ takes values of $1$ and $r-1$ (in the context of a transversely holomorphic foliation of codimension $r$),
	respectively (Remark \ref{nvwa}). 
    
When delving into the intermediate cases ($1<p<{r-1}$), a thought-provoking question naturally arises: Does there exist a non-trivial (specifically, with $1<p<{r-1}$, and non-transversely K\"ahler) transversely $p$-K\"ahler foliation? The answer to this question is affirmative, as demonstrated in Example \ref{nilmanifold}.
	So, it seems  that the transversely $p$-K\"ahler structures represent an interesting subject in foliation theory, especially within the realm of non-transversely K\"ahler geometry.

	In the second part of this paper, we are mainly concerned with  the local stabilities of transversely $p$-K\"ahler structures.
	
	El Kacimi Alaoui proved that small deformations of a compact K\"ahler orbifold as an orbifold are still K\"ahler \cite{EKA88}, which is a generalization of Kodaira--Spencer's result of stability  for compact K\"ahler manifolds. Later, El Kacimi Alaoui--Gmira \cite{EKAG97}  proved a much more general result as follows: let $\mathcal{F}$ be a \textit{homologically orientable} Hermitian foliation on a compact manifold and $\mathcal{F}_{t}$ a deformation of $\mathcal{F}$ by a transversely holomorphic foliation with fixed differentiable type, parametrized by an open neighborhood of 0 in $\mathbb{R}^{d}$. Suppose that a transversely Hermitian metric $\sigma$ of $\mathcal{F}=\mathcal{F}_{0}$ is K\"ahlerian. Then $\mathcal{F}_{t}$ has a transversely K\"ahler metric $\sigma_{t}$ for every $t$ sufficiently close to 0, depending differentiably on $t$ with $\sigma_{0}=\sigma$. See also \cite[Theorem 5.2]{Ra21}.  One can refer to Subsections \ref{subsection transversely }--\ref{subsection foliation defor} for relevant definitions. 
    
    Notice that if the deformation $\mc F_t$ is arbitrary, then the above assertion doesn't hold, due to the example built in \cite{EKAN90},  namely an analytic family $\left\{\mc F_t\right\}_{t\in\mb C}$ of holomorphic foliations on a compact complex nilmanifold such that $\mc F_0$ is transversely K\"ahler, but for any $t\not=0$, $\mathcal F_t$ has no transversely K\"ahler structure. See also the discussion  in \cite[Section 6]{Ra21} based on the examples  from \cite{No08, GNT16}.

	Inspired by \cite{RWZ19},  we  extend the `weak' $\partial\bp$-properties to the  foliated version  (Definition \ref{MILD}), and then prove the following local stabilities theorem by adapting the power series method  to our setting. The main ingredients in this proof are the transversely elliptic operator theory initiated from \cite{EKA90} and the Kuranishi family constructed by \cite{EKAN89}, see
	Subsection \ref{Foliation proofproof} for more details. This is our  \textit{main result} in this paper.
	\begin{bigthm}[{Main Result}]\label{FOLiation Pkahler}
		Let $\left\{\mc F_t\right\}_{t\in U}$ be a smooth family of transversely Hermitian structures on a compact foliated manifold $(M,\mc F)$   with fixed differentiable type ($\mc F$ is of complex codimension $r$), parametrized by an open neighborhood $U$ of $0$ in $\mathbb{R}^{d}$. If   $\mathcal{F}=\mathcal{F}_{0}$ is transversely $p$-K\"ahler and satisfies the $(p,p+1)$-th mild $\p\b{\p}$-property for $1\leq p\leq r-1$, then $\mathcal{F}_{t}$ is also  transversely  $p$-K\"ahler for every $t$ sufficiently close to $0$.
	\end{bigthm}

This result strengthens the works of  \cite{EKAG97,Ra21}  on that of the transversely K\"ahler foliations with homologically orientability, and  can be viewed as a generalization of \cite[Theorem 1.1]{RWZ19} to the foliated case.  In a certain sense, the approach we have adopted has resulted in greater efficiency compared to the methods used in \cite{EKAG97,Ra21}, cf. Remark \ref{Non} (\ref{AAA}).

Notice that even when $p=1$ (transversely K\"ahler) Theorem \ref{FOLiation Pkahler}  is new.

\begin{bigcor}\label{1FOLiation 1kahler}
	With the setting of Theorem \ref{FOLiation Pkahler}, if   $\mathcal{F}=\mathcal{F}_{0}$ is transversely K\"ahler, then $\mathcal{F}_{t}$ is also  transversely  K\"ahler for every $t$ sufficiently close to $0$.
\end{bigcor}

Compared with the previous works \cite{EKAG97,Ra21}, Corollary \ref{1FOLiation 1kahler} can drop the \textit{homologically orientability} assumption. To this end, We observe that a transversely K\"ahler foliation, even without homologically orientability, also satisfies the $\p\bp$-property and, therefore,   satisfies $(1,2)$-th mild $\p\b{\p}$-property, cf. Subsection \ref{nonHO} and Remark \ref{Non} (\ref{BBB}).
	
	Finally, several theorems concerning the deformation invariance of basic Hodge/Bott--Chern numbers with `weak' $\p\bp$-properties are displayed, see Theorem \ref{basic hodge}. In these theorems, the homologically orientability assumption is necessary, as indicated in Subsection \ref{defor}. We also require the foliation to remain unchanged in this context. For further insights, one can refer to Question \ref{Razny question} posed by Ra\'zny (and also the preceding discussions) regarding the invariance of basic Hodge numbers under arbitrary deformations of transversely K\"ahler foliations.
	
	\begin{Conventionno}
		All compact complex (or smooth) manifolds in this paper are assumed to be connected unless mentioned otherwise and $\pi: \mathcal{X} \rightarrow B$ will always denote a differentiable or holomorphic family of $n$-dimensional compact complex manifolds, whose central fiber is $(X_0,z)$ with local holomorphic coordinates $z:=(z^i)_{i=1,\ldots,n}$ and general fiber is $X_t:=\pi^{-1}(t)$.
	\end{Conventionno}

	\noindent
\textbf{Acknowledegments}
Both authors would like to thank Professors A. El Kacimi Alaoui, M. Nicolau and P. Ra\'zny, for explaining us the details on their articles \cite{EKAN89} and \cite{Ra21}, respectively. We also appreciate Professors Wei Xia, Xiangdong Yang, Tao Zheng and Shengmao Zhu for giving many useful suggestions on this paper.  
Additionally, we thank Professors D. Angella and J.-P. Demailly for their two discussions via emails on the upper semi-continuity of Bott--Chern numbers. Last but not least, we would like to express our gratitude to the anonymous reviewers for their numerous helpful and constructive suggestions, particularly on Example \ref{nilmanifold} and Subsection \ref{nonHO}, which have significantly improved this paper.

\section{Variations of $\partial\bar\partial$-property and $\partial\bar\partial$-equation}
In this section, we collect some basics  to be used later. Throughout this section, we will always denote by $X$ a compact complex manifold of complex dimension $n$.
\subsection{Cohomology groups and variations of $\p\bp$-property}\label{section Variations ofddbar}
	We will often use the commutative diagram:
	\begin{equation}\label{diag}
		\xymatrix{      & H^{p,q}_{\p}(X) \ar[dr]^{\iota^{p,q}_{\p,\mathrm{A}}} &            \\
			H^{p,q}_{\mathrm {BC}}(X) \ar[ur]^{\iota^{p,q}_{\mathrm {BC},\p}} \ar[dr]_{\iota^{p,q}_{\mathrm {BC},\db}} \ar[r]^{\iota^{p,q}_{\mathrm {BC},dR}} &  H^{p+q}_{dR}(X)\ar[r]^{\iota^{p,q}_{dR,\mathrm{A}}} &  H^{p,q}_{\mathrm{A}}(X)  \\
			& H^{p,q}_{\db}(X) \ar[ur]_{\iota^{p,q}_{\db,\mathrm{A}}} &         . }
	\end{equation}
	Recall that \textit{Dolbeault cohomology groups} $H^{\bullet,\bullet}_{\db}(X)$ of $X$ are defined by:
	$$H^{\bullet,\bullet}_{\db}(X):=\frac{\ker\db}{\im\ \db},$$
	with $H^{\bullet,\bullet}_{\p}(X)$ likewise defined, while \textit{Bott--Chern} and \textit{Aeppli cohomology groups} are defined as \begin{equation}\label{Bottchern coho}
		H^{\bullet,\bullet}_{\mathrm {BC}}(X):=\frac{\ker \p\cap \ker\db}{\im\ \p\db}\quad
		\text{and}\quad H^{\bullet,\bullet}_{\mathrm{A}}(X):=\frac{\ker \p\db}{\im\ \p+\im\ \db},
	\end{equation}
	respectively. 
    
    The dimensions of $H^{p+q}_{dR}(X)$, $H^{p,q}_{\db}(X)$, $H^{p,q}_{\mathrm {BC}}(X)$, $H^{p,q}_{\mathrm{A}}(X)$ and $H^{p,q}_{\p}(X)$ over $\mathbb{C}$
	are denoted by $b_{p+q}(X)$, $h^{p,q}_{\db}(X)$, $h^{p,q}_{\mathrm {BC}}(X)$, $h^{p,q}_{\mathrm{A}}(X)$ and $h^{p,q}_{\p}(X)$, respectively, and the first four of them
	are usually called the \textit{$(p+q)$-th Betti numbers}, \textit{$(p,q)$-th Hodge numbers}, \textit{Bott--Chern numbers} and \textit{Aeppli numbers} of $X$, respectively. From the very definition of these cohomology groups,
	the following equalities clearly hold
	\begin{equation*} \label{dual Aeppli}
		h^{p,q}_{\mathrm {BC}} = h^{q,p}_{\mathrm {BC}}=h^{n-q,n-p}_{\mathrm{A}}=h^{n-p,n-q}_{\mathrm{A}},  h^{n-p,n-q}_{\db}=h^{p,q}_{\db}=h^{q,p}_{\p}=h^{n-q,n-p}_{\p}.
	\end{equation*}
	
	So the \textit{(standard) $\p\db$-property},  which means for every pure-type $d$-closed form on a compact complex manifold, the properties of
	$d$-exactness, $\p$-exactness, $\b{\p}$-exactness and
	$\p\b{\p}$-exactness are equivalent, is equivalent to the following mappings
	$$\iota^{p,q}_{\mathrm {BC},dR}: H^{p,q}_{\mathrm {BC}}(X)\longrightarrow H^{p+q}_{dR}(X)$$
	are injective for all $p,q$, or to the isomorphisms of all the maps in Diagram \eqref{diag} by \cite[Remark 5.16]{DGMS75}.
	
	Recall that  a compact complex manifold $X$ satisfies $\mathbb{S}^{p,q}$ (resp. $\mathbb{B}^{p,q}$)
	if for any $\db$-closed
	$\p g\in A^{p,q}(X)$, the equation (\ref{555555})
	has a solution
	(resp. a $\p$-exact solution) of pure-type complex differential form. Similarly,
	a compact complex manifold $X$ is said to satisfy
	$\mathcal{S}^{p,q}$
	(resp. $\mathcal{B}^{p,q}$), if
	for any $\db$-closed $g\in A^{p-1,q}(X)$, the equation (\ref{555555}) has a solution
	(resp. a $\p$-exact solution)  of pure-type complex differential form. It is easy to verify the following implications:
	\[ \begin{array}{ccc}
		\mathbb{B}^{p,q} & \Rightarrow & \mathbb{S}^{p,q} \\
		\Downarrow &   & \Downarrow \\
		\mathcal{B}^{p,q} & \Rightarrow & \mathcal{S}^{p,q}. \\
	\end{array} \]
	And it is apparent that a compact complex manifold $X$, where the $\p\db$-property holds, satisfies
	$\mathbb{B}^{p,q}$ for any $(p,q)$.
	
	It is easy to check that the following statements are equivalent:
	\begin{enumerate}\label{equivalent }
		\item\label{equ 1}
		the \textit{injectivity} of $\iota^{p,q}_{\mathrm {BC},\p}$ holds on $X$ $\Leftrightarrow$
		{$X$ satisfies $\mathbb{B}^{p,q}$};
		\item\label{equ2}
		the \textit{injectivity} of $\iota^{p,q}_{\db,\mathrm{A}}$ holds on $X$ $\Leftrightarrow$
		$X$ satisfies $\mathbb{S}^{p,q}$;
		\item\label{equ3}
		{the \textit{surjectivity} of $\iota^{p-1,q}_{\mathrm {BC},\db}$ holds on $X$} $\Leftrightarrow$
		{$X$ satisfies $\mathcal{B}^{p,q}$}.
	\end{enumerate}
	\subsection{Hodge theory on compact complex manifolds: Bott--Chern}
	Let $X$ be a compact complex manifold. The Bott--Chern cohomology group has been introduced in (\ref{Bottchern coho}).
	Also, the \textit{Bott--Chern Laplacian} is given by
	\beq\label{bc-Lap}
	\square_{{\mathrm {BC}}}:=\p\db\db^*\p^*+\db^*\p^*\p\db+\db^*\p\p^*\db+\p^*\db\db^*\p+\db^*\db+\p^*\p,
	\eeq and $\G_{\mathrm {BC}}$ is the associated \textit{Green's operator} of
	this  fourth order Kodaira--Spencer
	operator. 
    
    Then we have the \textit{Hodge decomposition} of $\square_{\mathrm {BC}}$ on $X$:
	$$\label{bc-hod}
	A^{p,q}(X)=\ker\square_{\mathrm {BC}}\oplus \textrm{im}~(\p\db)
	\oplus(\textrm{im}~\p^*+\textrm{im}~\db^*),
	$$
	whose three parts are orthogonal to each other with respect to the
	$L^2$-scalar product defined by a Hermitian metric on $X$, combined with the equality
	$$\label{identity}
	\1=\mathbb{H}_{\mathrm {BC}}+\square_{\mathrm {BC}}\G_{\mathrm {BC}}=\mathbb{H}_{\mathrm {BC}}+\G_{\mathrm {BC}}\square_{\mathrm {BC}},
	$$
	where $\mathbb{H}_{\mathrm {BC}}$ is the \textit{harmonic projection operator}. And it
	should be noted that
	\begin{equation*}\label{bc-ker}
		\ker\square_{\mathrm {BC}}=\ker\p\cap\ker\db\cap\ker\ (\p\db)^*.
	\end{equation*}
	We get  the following two observations: \begin{enumerate}[(1)]\label{commuate}
		\item \label{case-1}
		$\square_{\mathrm {BC}}\p\db(\p\db)^*=\p\db(\p\db)^*\square_{\mathrm {BC}};$
		\item \label{case-2}
		$\G_{\mathrm {BC}}\p\db(\p\db)^*=\p\db(\p\db)^*\G_{\mathrm {BC}}.$
	\end{enumerate}
	
	For the resolution of $\p\db$-equations, we need a crucial lemma.
	\begin{lemma}[{\cite[Theorem 4.1]{Po15}}]\label{ddbar-eq}
	Let $(X,\omega)$ be a compact Hermitian manifold and $\alpha$ a pure-type complex differential form.
	If the $\p\db$-equation	$\p \db x = \alpha$
	admits a pure-type solution, then
	\[
	(\p\db)^*\G_{\mathrm{BC}}\alpha
	\]
	is a solution that uniquely minimizes the $L^2$-norm among all solutions with respect to~$\omega$.   Besides, the equalities hold
		\[\G_{\mathrm {BC}}(\p\db) = (\p\db) \G_{\mathrm{A}}\quad \text{and} \quad (\p\db)^*\G_{\mathrm {BC}} = \G_{\mathrm{A}}(\p\db)^*,\]
		where $\G_{\mathrm {BC}}$ and $\G_{\mathrm{A}}$ are the associated Green's operators of
		$\square_{\mathrm {BC}}$ and $\square_{\mathrm{A}}$, respectively. Here  $\square_{\mathrm {BC}}$ is defined in \eqref{bc-Lap} and
		$\square_{{\mathrm{A}}}$ is the second
		Kodaira--Spencer operator (often also called Aeppli
		Laplacian)
		$$\square_{{\mathrm{A}}}=\p^*\db^*\db\p+\db\p\p^*\db^*+\db\p^*\p\db^*+\p\db^*\db\p^*+\db\db^*+\p\p^*.$$
	\end{lemma}

	\section{Extension of closed forms: Bott--Chern approach}\label{deformation p,q Bottchern}
	Wei--Zhu \cite{WZ20} applied the $\p\bp$-Hodge theory to extend a $d$-closed $(p,q)$-form on $X_0$ to a $d$-closed filtrated $(p+q)$-form on $X_t$, whose  $(p,q)$-part on $X_t$ is $\bp_{t}$-closed by type projection.  We will reprove their theorem in this section by applying Bott--Chern theory.
	\subsection{Beltrami differentials and extension map}\label{subsection Deformation theory and extension equ }
	By a \textit{holomorphic family} $\pi:\mc{X}\rightarrow B$ of compact complex manifolds from a complex manifold to a connected complex manifold, we mean that $\pi$ is a proper and surjective holomorphic submersion, as in \cite[Definition 2.8]{Ko86}; while for differentiable one, we adopt:
	\begin{definition}[{\cite[Definition 4.1]{Ko86}}] \label{Differential family} {Let $\mc{X}$ be a differentiable manifold, $B$ a domain of $\mathbb{R}^d$ and $\pi$ a smooth map of $\mc{X}$ onto $B$.
			By a  \textit{differentiable family of $n$-dimensional compact complex manifolds} we mean the triple $\pi:\mc{X}\rightarrow B$ satisfying the following conditions:
			\begin{enumerate}[$(a)$]
				\item
				The rank of the Jacobian matrix of $\pi$ is equal to $d$ at every point of $\mc{X}$;
				\item
				For each point $t\in B$, $\pi^{-1}(t)$ is a compact connected subset of $\mc{X}$;
				\item
				$\pi^{-1}(t)$ is the underlying differentiable manifold of the $n$-dimensional compact complex manifold $X_t$ associated to each $t\in B$;
				\item
				There is a locally finite open covering $\{\mathcal{U}_j\ |\ j=1,2,\cdots\}$ of $\mc{X}$ and complex-valued smooth functions $\zeta_j^1(p),\cdots,\zeta_j^n(p)$, defined on $\mathcal{U}_j$ such that for each $t$, $$\{p\longmapsto (\zeta_j^1(p),\cdots,\zeta_j^n(p))\ |\ \mathcal{U}_j\cap \pi^{-1}(t)\neq \emptyset\}$$ form a system of local holomorphic coordinates of $X_t$.
		\end{enumerate}}
	\end{definition}
	
	Beltrami differential plays an important role in deformation theory. For a compact complex manifold $X$, we call  an element in $A^{0,1}(X,
	T^{1,0}_X)$ a \textit{Beltrami differential}, where $T^{1,0}_X$ denotes the holomorphic tangent
	bundle of $X$. Then $\iota_\varphi$ or $\varphi\lrcorner$ denotes the
	contraction operator with $\varphi\in A^{0,1}(X,T^{1,0}_X)$
	alternatively if there is no confusion. One similarly follows the
	notation
	\begin{equation*}\label{0e-convention}
		e^{\heartsuit}=\sum_{k=0}^\infty \frac{1}{k!} \heartsuit^{k},
	\end{equation*}
	where $\heartsuit^{k}$ denotes $k$-time action of the operator
	$\heartsuit$.  The summation in
	the above formulation is often finite since the dimension of $X$ is finite.
	
	We will always consider a differentiable family $\pi:\mathcal{X} \rightarrow
	B$ of compact complex $n$-dimensional manifolds
	over a sufficiently small domain in $\mathbb{R}^d$ with the
	reference fiber $X_0:= \pi^{-1}(0)$ and the general fibers $X_t:=
	\pi^{-1}(t).$ For simplicity we set $d=1$. Denote by
	$\zeta:=(\zeta^\alpha_j(z,t))$ the holomorphic coordinates of $X_t$
	induced by the family with the holomorphic coordinates $z:=(z^i)$ of
	$X_0$, under a coordinate covering $\{\mathcal{U}_j\}$ of
	$\mathcal{X}$, when $t$ is assumed to be fixed, as the standard
	notions in deformation theory described at the beginning of
	\cite[Chapter 4]{MK71}. This family induces a canonical differentiable family of
	integrable Beltrami differentials on $X_0$, denoted by $\varphi(z,t)$,
	$\varphi(t)$ and $\varphi$ interchangeably.
	
	In the sequel we will state some explicit  computations as presented in \cite[Chapter 4.1]{MK71} or \cite[Section 2.1]{RZ18}. A Beltrami differential can be written as
	\begin{equation}\label{varphi local expression}
		\varphi(t)=\left(\frac{\partial}{\partial z}\right)^{\rm{T}}\left(\frac{\partial \zeta}{\partial z}\right)^{-1} \bar{\partial} \zeta,
	\end{equation}
	where $\frac{\partial}{\partial z}=\left(\begin{array}{c}\frac{\partial}{\partial z^{1}} \\ \vdots \\ \frac{\partial}{\partial z^{n}}\end{array}\right), \,\overline{\partial} \zeta=\left(\begin{array}{c}\overline\partial \zeta^{1} \\ \vdots \\ \overline{\partial} \zeta^{n}\end{array}\right), \,\frac{\partial \zeta}{\partial z}$ stands for the matrix $\left(\frac{\partial \zeta^{\alpha}}{\partial z^{j}}\right)_{1 \leq \alpha \leq n \atop 1 \leq j \leq n}$ and $\alpha, j$ are the row and column indices. Here $\left(\frac{\partial}{\partial z}\right)^{\rm T}$ is the transpose of $\frac{\partial}{\partial z}$ and $\overline{\partial}$ denotes the Cauchy--Riemann operator with respect to the holomorphic structure on $X_{0}.$
	Locally, $\varphi(t)$ is expressed as ${\varphi_{\overline j}^i}\dz^j\otimes\frac{\p}{\p z^i}\in A^{0,1}(X_0,T_{X_0}^{1,0}),$ so it can be considered as a matrix ${\left(\v^i _{\overline j}\right)}_{1 \leq i \leq n \atop 1 \leq j \leq n}$. By (\ref{varphi local expression}), this matrix   can be explicitly written as
	\begin{equation}\label{6666}
		\varphi= {\left(\v^i _{\overline{ j}}\right)}_{1 \leq i \leq n \atop 1 \leq j \leq n}=\varphi(t)\big( \frac{\p\ }{\p \bar{z}^j}, dz^i \big)
		= \lk \lk \frac{\p \zeta}{\p z} \rk^{\!\!\!-1} \lk \frac{\p \zeta}{\p \bar{z}} \rk
		\rk^i_{\overline{ j}}.
	\end{equation}
    
	A fundamental fact is that the Beltrami differential $\varphi(t)$
	defined as above satisfies the integrability:
	\begin{equation*}\label{int-conditin}
		\bp\varphi(t)=\frac{1}{2}[\varphi(t),\varphi(t)],
	\end{equation*}
	and we then call $\varphi(t)$ an \textit{integrable Beltrami differential}.
	Now  consider the exponential operator of contraction operators
	$$e^{\iota_\v}:=\sum_{k=0}^{\infty}\frac{{\iota^k_{\v(t)}}}{k!}.$$
	A calculation shows  that $$e^{\iota_{\varphi}}\left(d z^{i_{1}} \wedge \cdots \wedge d z^{i_{p}}\right)=\left(d z^{i_{1}}+{\varphi(t)}\l d z^{i_{1}}\right) \wedge \cdots \wedge\left(d z^{i_{p}}+{\varphi(t)}\l d z^{i_{p}}\right).$$
	Then one obtains $$
	e^{\iota_{\v}}: A^{p, 0}(X_0) \longrightarrow A^{p, 0}(X_{t})
	$$
	by
	$$d \zeta^{\beta}=\frac{\partial \zeta^{\beta}}{\partial z^{i}} d z^{i}+\frac{\partial \zeta^{\beta}}{\partial \bar{z}^{i}} d \bar{z}^{i}=\frac{\partial \zeta^{\beta}}{\partial z^{i}}e^{\iota_\varphi} (d z^{i})$$
	according to (\ref{6666}). However, $e^{\iota_{\v}}$ can't preserve $(p,q)$-forms for $1\leq q\leq n.$  
    
    An interesting observation in \cite[Theorem 5.1]{FM09} or \cite[Section 3.1]{WZ20} tells that it in fact maps from a filtration on $X_0$ to one on $X_t$, i.e.,
	$$
	e^{\iota_{\varphi}}: F^{p} A^{k}(X_0)=\mathop\bigoplus_{p \leq s
		\leq k} A^{ s, k- s}(X_0) \longrightarrow F^{p} A^{k}(X_{t}).
	$$
	Actually, for $\sigma \in A^{ s, k- s}(X_0)$  ($p \leq  s \leq k$) with  the local expression
	$$\sigma=\sigma_{i_1\cdots i_s {j}_1\cdots {j}_{k-s}}dz^{i_1}\wedge \cdots \wedge d z^{i_{ s}} \wedge d \bar{z}^{j_{1}} \wedge \cdots \wedge d \bar{z}^{j_{k- s}},$$ one has
	$$\label{opearte}
	\begin{aligned}
		e^{\iota_{\varphi}}(\sigma)
		&=\sigma_{i_1\cdots i_s {j}_1\cdots {j}_{k-s}}e^{\iota_\varphi}\left(d z^{i_{1}} \wedge \cdots \wedge d z^{i_{ s}}\right) \wedge d \bar{z}^{j_{1}} \wedge \cdots \wedge d \bar{z}^{j_{k- s}} \\
		&=\sigma_{i_1\cdots i_s {j}_1\cdots {j}_{k-s}}\left(d z^{i_{1}}+{\varphi(t)}\l  d z^{i_{1}}\right) \wedge \cdots \wedge\left(d z^{i_{ s}}+{\varphi(t)}\l  d z^{i_{ s}}\right)\\&\quad\wedge\left(\frac{\partial \bar{z}^{j_{1}}}{\partial \zeta^ {\beta}} d \zeta^ {\beta}+\frac{\partial \bar{z}^{j_{1}}}{\partial \bar{\zeta}^ {\beta}} d \bar{\zeta}^ {\beta}\right) \wedge \cdots \wedge\left(\frac{\partial \bar{z}^{j_{k- s}}}{\partial \zeta^ {\beta}} d \zeta^ {\beta}+\frac{\partial \bar{z}^{j_{k- s}}}{\partial \bar{\zeta}^ {\beta}} d \bar{\zeta}^ {\beta}\right),
	\end{aligned}
	$$
	which clearly belongs to $F^{p} A^{k}\left(X_{t}\right)$. 
    
    Next, we will state the definitions of the projection operator and the extension operator, originating from \cite{WZ20,Xi21b}. For $\sigma\in A^{0,1}(X_0)$ with the local expression $\sigma=\sigma_j\dz^j$, one defines the projection operator $$\mathcal P_{\v}:A^{0,1}(X_0)\longrightarrow A^{0,1}(X_t)$$by
	\begin{equation}\label{prooooo}
		\begin{aligned}
			\mathcal P_{\v}\left(\sigma\right)&=\mathcal P_{\v}\left(\sigma_j\left(\frac{\partial \bar{z}^{j}}{\partial \zeta^ {\beta}}d \zeta^ {\beta}+\frac{\partial \bar{z}^{j}}{\partial \bar{\zeta}^ {\beta}} d \bar{\zeta}^ {\beta}\right)\right)
			:=\sigma_j \left(\frac{\partial \bar{z}^{j}}{\partial \bar{\zeta}^ {\beta}} d \bar{\zeta}^ {\beta}\right)\\
			&=\sigma_j\left({\left(\1-\overline\varphi\varphi\right)}^{-1}\right)^{\overline j}_{\bar k}\left(d\overline{z}^{k}+\overline{\varphi(t)}\lc
			d\overline{z}^{k}\right),\\
		\end{aligned}
	\end{equation}
	where $\varphi \overline{\varphi} =
	\overline{\varphi} \lc \varphi, \overline{\varphi} \varphi = \varphi
	\lc \overline{\varphi}$ and $\1$ is the identity matrix.
	Since $\varphi(t)$ is a well-defined, global $(1,0)$ vector valued $(0,1)$-form on $X_0$, $\mathcal P_\v$ is globally well-defined thanks to the last equality in (\ref{prooooo}) from \cite[(2.18)]{RZ18}.  
    
    We then use  the exponential operator
	$e^{\iota_\v}$ and the projection operator $\mathcal P_\v$ to define the extension operator $$\rho_\varphi: A^{p,q}(X_0)\longrightarrow A^{p,q}(X_t).$$ Concretely,  for $\sigma\in A^{p,q}(X_0)$ with the local expression $$\sigma=\sigma_{i_1\cdots i_p {j}_1\cdots {j}_{q}}d z^{i_{1}} \wedge \cdots \wedge d z^{i_{p}} \wedge d \bar{z}^{j_{1}} \wedge \cdots \wedge d \bar{z}^{j_{q}},$$ one has
	\begin{equation}\label{ccccc}
		\begin{aligned}
			\rho_\varphi\left(\sigma\right)
			&:=\sigma_{i_1\cdots i_p {j}_1\cdots {j}_{q}}e^{\iota_\varphi}\left(d z^{i_{1}} \wedge \cdots \wedge d z^{i_{p}}\right) \wedge \mathcal P_\v \left(\dz^{j_1}\right)\wedge\cdots\wedge \mathcal P_\v \left(\dz^{j_q}\right)\\
			&\;= \sigma_{i_1\cdots i_p {j}_1\cdots {j}_{q}}\left(d z^{i_{1}}+{\varphi(t)}\l  d z^{i_{1}}\right) \wedge \cdots \wedge\left(d z^{i_{p}}+{\varphi(t)}\l  d z^{i_{p}}\right)\\&\quad\, \wedge 	
			\lk \lk \1- \overline{\varphi} \varphi \rk^{\!-1}
			\rk^{ {j}_{1}}_{\bar k} \left(d\overline{z}^{k}+\overline{\varphi(t)}\lc
			d\overline{z}^{k}\right)\wedge\cdots\wedge\lk \lk \1- \overline{\varphi} \varphi \rk^{\!-1}
			\rk^{ {j}_{q} }_{\bar k} \left(d\overline{z}^{k}+\overline{\varphi(t)}\lc
			d\overline{z}^{k}\right).
		\end{aligned}
	\end{equation}
	Likewise, $\rho_\varphi$ is also globally well-defined.
	
	In \cite{ZR15,RZ18}, Zhao--the first author  introduced an extension map
	$$\label{RZ}
	e^{\iota_{\varphi}|\iota_{\overline{\varphi}}}:
	A^{p,q}(X_0)\longrightarrow A^{p,q}(X_t),
	$$ which can preserve all $(p,q)$-forms and played an important role in their subsequent papers \cite{RWZ19,RWZ21}.  More precisely, with the above notations, for $\sigma\in A^{p,q}(X_0)$, one defines \cite[Definition 2.8]{RZ18}:
	\begin{equation}\label{explicit}
		\begin{aligned}
			e^{\iota_{\varphi}|\iota_{\overline{\varphi}}}(\sigma)
			&=\sigma_{i_1\cdots i_p {j}_1\cdots {j}_{q}} e^{\iota_\varphi}\left(d z^{i_{1}} \wedge \cdots \wedge dz^{i_{p}}\right) \wedge e^{\iota_{\overline{\varphi}}}\left(\dz^{j_{1}} \wedge \cdots \wedge \dz^{j_q}\right)\\
			&=\sigma_{i_1\cdots i_p {j}_1\cdots {j}_{q}}\left(d z^{i_{1}}+{\varphi(t)}\l  d z^{i_{1}}\right) \wedge \cdots \wedge\left(d z^{i_{ p}}+{\varphi(t)}\l d z^{i_{p}}\right)\\&\quad\, \wedge
			\left(\dz^{j_{1}}+{\overline{\varphi(t)}}\l \dz^{j_{1}}\right) \wedge \cdots \wedge\left(\dz^{j_{q}}+{\overline{\varphi(t)}}\l \dz^{j_{q}}\right).\\
		\end{aligned}
	\end{equation}

	In \cite[Section 2.1]{RWZ21}, the first author--X. Wan--Q. Zhao introduced a new notation $\Finv$ to denote the
	simultaneous contraction on each component of a complex
	differential form. \footnote{For example,  for $\sigma\in A^{p,q}(X_0)$ with the above notation,
		$(\1-\b{\varphi}\varphi+\b{\varphi})\Finv\sigma$ means:
		$$
		\begin{aligned}
			\begin{split}
				&(\1-\b{\varphi}\varphi+\b{\varphi})\Finv(\sigma_{i_1\cdots
					i_p {j_1}\cdots {j_q}}dz^{i_1}\wedge\cdots \wedge dz^{p_p}\wedge
				d\b{z}^{j_1}\wedge\cdots \wedge d\b{z}^{j_q})
				\\
				=&\sigma_{i_1\cdots
					i_p {j_1}\cdots {j_q}}(\1-\b{\varphi}\varphi+\b{\varphi})\l
				dz^{i_1} \wedge\cdots\wedge(\1-\b{\varphi}\varphi +\b{\varphi})\l
				dz^{i_p}	\\&\qquad\quad\quad\wedge(\1-\b{\varphi}\varphi+\b{\varphi})\l
				d\b{z}^{j_1}\wedge\cdots\wedge(\1-\b{\varphi}\varphi+\b{\varphi})\l
				d\b{z}^{j_q}.
			\end{split}
		\end{aligned}
		$$}
	By comparing (\ref{ccccc}) and (\ref{explicit}), we get the relationship between the  two extension maps:
	\begin{equation}\label{sssds}
		\rho_{\varphi}=e^{\iota_{\varphi}|\iota_{\overline{\varphi}}}\circ{\left(\1-\overline\varphi\varphi\right)}^{-1}\Finv.
	\end{equation}
	the first author--Zhao obtained the $\bp$-extension obstruction for $(p,q)$-forms of the smooth family via
	the extension map $e^{\iota_{\varphi}|\iota_{\overline{\varphi}}}$ (cf. \cite[Proposition 2.13]{RZ18}), namely
	\begin{equation}\label{aaaaaa}
		\b{\p}_t(e^{\iota_{\varphi}|\iota_{\b{\varphi}}}(\sigma))
		=e^{\iota_{\varphi}|\iota_{\b{\varphi}}}((\1-\b{\varphi}\varphi)^{-1}\Finv([\p,\iota_{\varphi}]+\b{\p})(\1-\b{\varphi}\varphi)\Finv\sigma),
	\end{equation}
	where $\sigma\in A^{p,q}(X_0)$.
	One concludes that (\ref{aaaaaa}) is equivalent to the following assertion due to (\ref{sssds}):
	$${\rho_{\varphi}}^{-1} \b{\p}_t{\rho_{\varphi}}=\bp-\mathcal{L}^{1,0}_{\varphi(t)} \qquad\text{on}\,\, A^{p,q}(X_0),$$
	which is exactly \cite[Theorem 2.9, the case of $E=\Omega^{p}$]{Xi21b}.
	\begin{remark}
		In \cite{Tu21}, J. Tu used the pair deformation $\left\{(X_t,E_t)\right\}$ to give a correspondence between $E_0$-valued $(p,q)$-forms on $X_0$ and $E_t$-valued $(p,q)$-forms on $X_t$ by
		$$P_t: A^{p,q}(X_0, E_0)\longrightarrow A^{p,q}(X_t,E_t).$$ Recall that the  \textit{pair deformation} $\left\{\left(X_{t}, E_{t}\right)\right\}$ is a holomorphic family of pairs $\left\{\left(X_{t}, E_{t}\right)\right\}$ where each $E_{t}$ is a holomorphic vector bundle over a compact complex manifold $X_{t} .$ The holomorphic structures of $E_{t}$ and complex structures of $X_{t}$ vary simultaneously. In the case when each $E_t$ is the trivial line bundle, $P_t$ coincides with  (\ref{sssds}) and, therefore \cite[Theorem 2]{Tu21} is  equivalent to  \cite[Proposition 2.13]{RZ18}.
	\end{remark}
	
	The following proposition plays a key role in this paper:
	
	\begin{proposition}[{\cite[Theorem 3.4]{LRY15}, \cite[Proposition 2.2]{RZ18}}]\label{main1}
	Let $\phi\in A^{0,1}(X,T^{1,0}_X)$ on a complex manifold $X$. Then on $A^{*,*}(X)$,
	\begin{equation}\label{ext-old}
		e^{-\iota_{\phi}} \circ d \circ e^{\iota_{\phi}}
		= d - \mathcal{L}_\phi^{1,0} + \iota_{\bar{\partial}\phi - \frac{1}{2}[\phi,\phi]},
	\end{equation}
	where $\mathcal{L}_\phi^{1,0} := \iota_\phi \partial - \partial \iota_\phi$ is the Lie derivative.
	
	In particular, if $\varphi$ is an integrable Beltrami differential (i.e., $\bar{\partial}\varphi = \frac{1}{2}[\varphi,\varphi]$), then the last term in \eqref{ext-old} vanishes and we get
	\begin{equation}\label{ssss}
		e^{-\iota_{\varphi}} \circ d \circ e^{\iota_{\varphi}} = d - \mathcal{L}_\varphi^{1,0} = d + \partial \iota_\varphi - \iota_\varphi \partial.
	\end{equation}
	\end{proposition}
	
	From the proof of Proposition~\ref{main1}, we see that \eqref{ext-old} naturally generalizes the Tian--Todorov Lemma \cite{Ti87,To89},
	whose variants appear in \cite{Fr91,BK98,LSY09,Cle05} and also \cite{LR12,LRY15} for vector bundle-valued forms.
\begin{lemma}\label{aaaa}
	Let $\phi, \psi\in A^{0,1}(X,T^{1,0}_X)$ and $\alpha\in A^{*,*}(X)$ on an $n$-dimensional complex
	manifold $X$. Then
	\begin{equation}\label{f1}
		[\phi,\psi]\lrcorner\alpha
		= -\p(\psi\lrcorner(\phi\lrcorner\alpha))
		-\psi\lrcorner(\phi \lrcorner\p\alpha)
		+\phi\lrcorner\p(\psi\lrcorner\alpha)
		+\psi\lrcorner\p(\phi\lrcorner\alpha),
	\end{equation}
	where
	\[
	[\phi,\psi]
	:=\sum_{i,j=1}^n\big(\phi^i\wedge\partial_i\psi^j
	+\psi^i\wedge\partial_i{\phi}^j\big)\otimes\partial_j
	\]
	for $\phi=\sum_{i}\phi^i\otimes\partial_i$
	and $\psi=\sum_{i}\psi^i\otimes\partial_i$. Here $\partial_i:=\frac{\partial\ }{\partial z^i}$ and similar for others. 
\end{lemma}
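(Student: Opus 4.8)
The plan is to establish the identity $[\phi,\psi]\lrcorner\alpha=-\p(\psi\lrcorner(\phi\lrcorner\alpha))-\psi\lrcorner(\phi\lrcorner\p\alpha)+\phi\lrcorner\p(\psi\lrcorner\alpha)+\psi\lrcorner\p(\phi\lrcorner\alpha)$ by a direct local computation, since both sides are $\mathbb{C}$-bilinear in $(\phi,\psi)$ and local in nature. First I would reduce to the case where $\phi=f\,d\bar z^a\otimes\partial_i$ and $\psi=g\,d\bar z^b\otimes\partial_j$ are single monomials (with $f,g$ smooth functions), using bilinearity; by symmetry of the claimed formula under $\phi\leftrightarrow\psi$ it suffices to track one such pair carefully. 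The bracket then becomes $[\phi,\psi]=\big(f\,d\bar z^a\wedge\partial_i(g\,d\bar z^b)+g\,d\bar z^b\wedge\partial_j(f\,d\bar z^a)\big)\otimes\partial_{(\cdot)}$, i.e. it contracts with $\partial_i g$ and $\partial_j f$ respectively; the $(0,1)$-form legs $d\bar z^a\wedge d\bar z^b$ appear as the "outer" factor in the contraction.

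The key computational step is the Leibniz-type behaviour of the contraction operator $\iota_\phi$ with respect to $\p$. Writing $L_\phi^{1,0}=\iota_\phi\p-\p\iota_\phi$ for the $(1,0)$-Lie derivative (as in Proposition \ref{main1}), one has the standard commutator relation $[\iota_\phi,L_\psi^{1,0}]=\iota_{[\phi,\psi]}$ on $A^{*,*}(X)$ — this is exactly the graded-Lie-algebra structure underlying the Tian--Todorov machinery, and it is where $\p^2=0$ gets used. Expanding $[\iota_\phi,\iota_\psi\p-\p\iota_\psi]$ and applying it to $\alpha$ gives, after cancelling the terms involving $\iota_\phi\iota_\psi\p$ and $\p\iota_\phi\iota_\psi$ appropriately, precisely the four-term right-hand side; the signs work out because $\iota_\phi$ and $\iota_\psi$ each have odd total degree (they lower form-degree by $1$) so they anticommute with each other and the Koszul sign rule dictates the $\pm$. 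Alternatively, and perhaps more transparently for a self-contained proof, I would just expand both sides on a monomial $\alpha=dz^I\wedge d\bar z^J$: contracting $dz^I$ against $\partial_i,\partial_j$ and keeping careful track of which $dz$-leg is removed, and comparing with $\p$ acting before versus after contraction, the four terms on the right collapse pairwise except for the single term reproducing $\partial_i g$ (minus the analogous $\partial_j f$ term from the $\psi$-first orderings), matching the bracket.

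The main obstacle is bookkeeping of signs and of the fact that $\iota_\phi$ carries a $(0,1)$-form factor that must be wedged in the correct slot: unlike contraction by an ordinary vector field, $\iota_\phi\alpha$ has the $d\bar z^a$ factor inserted, so when a second contraction $\iota_\psi$ or a derivative $\p$ is applied, one must move $d\bar z^a$ past intervening $(1,0)$-covectors, each move costing a sign. Getting this consistent with the convention chosen for the bracket $[\phi,\psi]=\sum(\phi^i\wedge\partial_i\psi^j+\psi^i\wedge\partial_i\phi^j)\otimes\partial_j$ — note the $\wedge$, so the $(0,1)$-legs multiply and there is no extra sign from reordering them since $d\bar z^a\wedge d\bar z^b$ is already antisymmetric — is the delicate point, but it is routine once a fixed ordering convention is adopted. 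I would therefore present the commutator-identity derivation $[\iota_\phi,L_\psi^{1,0}]=\iota_{[\phi,\psi]}$ as the clean conceptual route, noting that it follows formally from $\p^2=0$ together with the derivation property of $\iota_\phi$, and relegate the explicit monomial verification to a remark; the lemma is essentially a restatement of this commutator applied to $\alpha$ and rearranged using $L_\psi^{1,0}\alpha=\iota_\psi\p\alpha-\p\iota_\psi\alpha$.
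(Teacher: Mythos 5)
You should first note that the paper itself gives no proof of this lemma: it is quoted as the (generalized) Tian--Todorov lemma with references to the surrounding literature, so your argument has to stand on its own, and as written its central step is wrong in two places. First, $\iota_\phi$ for $\phi\in A^{0,1}(X,T^{1,0}_X)$ is \emph{not} an odd operator: it shifts bidegree by $(-1,+1)$, hence preserves total degree, is an even derivation, and (since $\partial_i\lrcorner\psi^j=0$ for the $(0,1)$-form coefficients) one has $\iota_\phi\iota_\psi=\iota_\psi\iota_\phi$ --- they commute, they do not anticommute as you claim. Second, with the paper's (symmetric) bracket the correct operator identity is $[\mathcal{L}^{1,0}_\psi,\iota_\phi]=\iota_{[\phi,\psi]}$, i.e.\ $\iota_{[\phi,\psi]}=-[\iota_\phi,\mathcal{L}^{1,0}_\psi]$: testing on the generator $dz^k$ gives $[\iota_\phi,\mathcal{L}^{1,0}_\psi]\,dz^k=-\phi^i\wedge\partial_i\psi^k-\psi^i\wedge\partial_i\phi^k=-[\phi,\psi]\lrcorner dz^k$. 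Consistently, expanding $[\iota_\phi,\iota_\psi\p-\p\iota_\psi]$ and using the true commutation $\iota_\phi\iota_\psi=\iota_\psi\iota_\phi$ produces exactly the \emph{negative} of the four-term right-hand side of the lemma, not the right-hand side itself. So your chain reaches the correct formula only because two sign errors compensate, and the justification you offer for the signs (odd degree, Koszul rule, anticommutation) is the wrong reason; a referee would reject the derivation as it stands.

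The more substantive gap is that the commutator identity is not ``formal from $\p^2=0$ plus the derivation property'': $\p^2=0$ is never used, and asserting the identity is essentially asserting the lemma in operator form. What actually proves it is the local check you postpone to ``a remark'': both $[\mathcal{L}^{1,0}_\psi,\iota_\phi]$ and $\iota_{[\phi,\psi]}$ are antiderivations of bidegree $(-1,2)$ (an even derivation commutated with an odd antiderivation is an odd antiderivation), so it suffices to verify equality on local generators --- on functions and on $d\bar z^k$ both sides vanish, and on $dz^k$ both give $\phi^i\wedge\partial_i\psi^k+\psi^i\wedge\partial_i\phi^k$, which is precisely the bracket's $k$-th component. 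Either carry out this generator check (the cleanest repair of your ``conceptual route'', with the corrected sign), or carry out the direct monomial expansion of the four terms (your fallback, which is the standard proof in the references the paper cites); without one of these, the key identity is asserted rather than proved, and asserted with the wrong sign.
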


\begin{remark} As shown in (the proof of) \cite[Lemma 3.2]{LRY15},
Equation~\eqref{f1} means that the Lie derivative $\mathcal{L}^{1,0}_\phi$ and the contraction $\iota_\psi$ do not commute; their difference is precisely the contraction with the Lie bracket $[\phi,\psi]$, i.e., $[\mathcal{L}^{1,0}_\phi,\iota_\psi]=\iota_{[\phi,\psi]}.$
\end{remark}
	
	\subsection{Kuranishi family}\label{subsection-Kuranishi family}
	We introduce some basics on Kuranishi family of complex structures in this subsection originally from \cite{Ku64}.
	
	By (the proof of) Kuranishi's completeness theorem \cite{Ku64}, for any compact complex manifold $X_0$, there exists a complete holomorphic family
	$\varpi:\mc{K}\rightarrow T$ of complex manifolds at the reference point $0\in T$ in the sense that for any differentiable family $\pi:\mc{X}\rightarrow B$ with $\pi^{-1}(s_0)=\varpi^{-1}(0)=X_0$, there exist a sufficiently small neighborhood $E\subseteq B$ of $s_0$, and smooth maps $\Phi: \mathcal {X}_E\rightarrow \mathcal {K}$,  $\tau: E\rightarrow T$ with $\tau(s_0)=0$ such that the diagram commutes
	$$\xymatrix{\mathcal {X}_E \ar[r]^{\Phi}\ar[d]_\pi& \mathcal {K}\ar[d]^\varpi\\
		(E,s_0)\ar[r]^{\tau}  & (T,0),}$$
	$\Phi$ maps $\pi^{-1}(s)$ biholomorphically onto $\varpi^{-1}(\tau(s))$ for each $s\in E$, and $$\Phi: \pi^{-1}(s_0)=X_0\longrightarrow \varpi^{-1}(0)=X_0$$ is the identity map.
This family is called the \textit{Kuranishi family} and constructed as follows. 
    
    Let $\{\eta_\nu\}_{\nu=1}^m$ be a  base for the harmonic space $\mathbb{H}^{0,1}(X_0,T^{1,0}_{X_0})$, where some suitable Hermitian metric is fixed on $X_0$ and $m\geq 1$; Otherwise the complex manifold $X_0$ would be  \textit{rigid}, i.e., for any differentiable family $\phi:\mc{M}\to P$ with $s_0\in P$ and $\phi^{-1}(s_0)=X_0$, there is a neighborhood $V \subseteq P$ of $s_0$ such that $\phi:\phi^{-1}(V)\to V$ is trivial. Then one can construct a holomorphic family
	$$\label{phi-ps-pp}\varphi(t) = \sum_{|I|=1}^{\infty}\varphi_{I}t^I:=\sum_{j=1}^{\infty}\varphi_j(t),\ I=(i_1,\cdots,i_m),\ t=(t_1,\cdots,t_m)\in \mathbb{C}^m,$$ {for small $t$,} of Beltrami differentials as follows:
	$$\label{phi-ps-0}
	\varphi_1(t)=\sum_{\nu=1}^{m}t_\nu\eta_\nu
	$$
	and for $|I|\geq 2$,
	$$\label{phi-ps}
	\varphi_I=\frac{1}{2}\db^*\G\sum_{J+L=I}[\varphi_J,\varphi_{L}].
	$$
	
Clearly, $\varphi(t)$ satisfies the equation
	$$\varphi(t)=\varphi_1+\frac{1}{2}\db^*\G[\varphi(t),\varphi(t)].$$
	Let
	$$T=\{t\ |\ \mathbb{H}[\varphi(t),\varphi(t)]=0 \}$$
	where $\mathbb H$ is the harmonic projection.
	So for each $t\in T$, $\varphi(t)$ satisfies
	\begin{equation}\label{int}
		\b{\p}\varphi(t)=\frac{1}{2}[\varphi(t),\varphi(t)],
	\end{equation}
	and determines a complex structure $X_t$ on the underlying differentiable manifold of $X_0$. More importantly, $\varphi(t)$ represents the complete holomorphic family $\varpi:\mc{K}\rightarrow T$ of complex manifolds. Roughly speaking, the Kuranishi family $\varpi:\mc{K}\rightarrow T$ contains all small differentiable deformations of $X_0$.
	
	\subsection{$d$-closed extension of $(p,q)$-form: Bott--Chern  approach}\label{important}
	
	Assume that a compact complex manifold $X_0$ satisfies $\mathbb B^{p,q+1}.$ Now, considering a Kuranishi family of deformations of $X_0$ over $\Delta_\epsilon$ for small $\epsilon$, one can find a family of integrable Beltrami differentials $\left\{\varphi(t)\right\}_{|t|\leq\epsilon} $  depending on $t$ holomorphically and describing the variations of complex structure on $X_0$.
	
	In this subsection, we will reprove Theorem \ref{main thm} due to \cite{WZ20}, from the perspective of  Bott--Chern Hodge theory.
	\begin{proof}	
		The proof will be divided into four steps.
		\begin{step}\label{step 1}
			{\rm\textbf{Transfer to the certain differential equations.}}\end{step}
By  equation \eqref{ssss} in Proposition \ref{main1}, for any $\mu \in A^{p,q}(X_0)$, the form $e^{\iota_{\varphi}}(\mu)$ is $d$-closed on $X_0$ (or $X_t$) if and only if
\[
\bigl(d + \partial \iota_{\varphi} - \iota_{\varphi} \partial\bigr) \mu = 0.
\]
		By comparing the types, one knows that the above equation amounts to
		\beq\label{!!}
		\begin{cases}
			\p\mu=0,\\
			\bp\mu=-\p(\varphi\l\mu).\\
		\end{cases}
		\eeq
		\begin{step}\label{step 2}
			{\rm\textbf{Study the integral equation.}}\end{step}
		Given any $d$-closed $(p,q)$-form $\mu_0\in A^{p,q}(X_0)$, one studies the integral equation as follows:
		\beq\label{integral equation}
		\mu+\p{(\p\bp)}^*\G_{\mathrm {BC}}\p({\varphi(t)}\l\mu)=\mu_{0}.
		\eeq
		In the sequel we will prove that the equation (\ref{integral equation}) possesses a unique solution $\mu(z,t)\in A^{p,q}(X_0)$ which is smooth  in $(z,t)$ and holomorphic in  $t\in \Delta_\epsilon$.
		
		Denote the completion of the norm space $\left(A^{p, q}(X_0),\|\cdot\|_{k, \alpha}\right)$ by $\mathscr{E}$, and consider the linear operator $$\mathcal{Q}_{\varphi(t)}:=-\p{(\p\bp)}^*\G_{\mathrm {BC}}\p \iota_{\varphi(t)}$$
		on $\mathscr{E}$. Then (\ref{integral equation}) is equivalent to the following equation
		\begin{equation}\label{new equation}
			(\mathrm{I}-\mathcal Q_{\v (t)})\mu=\mu_0.
		\end{equation}
		One can easily see that $\mathcal Q_{\v (t)}$  satisfies $\|\mathcal Q_{\v (t)}\|<1$ on $\mathscr{E}$ due to  the standard estimates for Green's operator $\G_{\mathrm {BC}}$ as $t\in \Delta_\epsilon$. We then apply \cite[Chapter \uppercase\expandafter{\romannumeral2}.1, Theorem 2]{Yo80} to $T= \mathrm{I}-\mathcal Q_{\v (t)}$ to obtain the unique solution
		\begin{equation}\label{zzz}
			\mu(z,t)={(\mathrm{I}-\mathcal Q_{\v (t)})}^{-1}\mu_{0}=\mu_{0}+\sum^{\infty}_{k=1}\mathcal Q_{\v (t)}^k(\mu_{0})
		\end{equation}
		of (\ref{new equation}) for any $|t|\leq\epsilon.$
		
		The integrable Beltrami differential $\left\{\varphi(t)\right\}_{|t|\leq\epsilon} $  can be written as a convergent power series in $t$ since it depends on $t$ holomorphically.  Thus, $\mu(z,t)$ is also a power series in $t$ by (\ref{zzz}). Moreover, it is easy to verify by the standard elliptic estimates and \eqref{new equation} that the $\|\cdot\|_{k, \alpha}$-norm of $\mu(z,t)$ is finite which implies that $\mu(z,t)$ is convergent for $|t|\leq\epsilon$. So $\mu(z,t)$ is holomorphic in $t$ for $|t|\leq\epsilon.$
		Since we have got the $C^k$-continuity of $\mu(z,t)$ from above, one can use the standard regularity theory for elliptic differential operator such as in \cite[Proposition 2.6 of Chapter 4]{MK71} to obtain that the unique solution $\mu(z,t)$ satisfying (\ref{integral equation}) is a smooth $(p,q)$-form in $(z,t)$ for small $t$.
		\begin{step}\label{step 3}
			{\rm\textbf{Show the solution $\mu(z,t)$ obtained in Step \ref{step 2} satisfies (\ref{!!})}}.\end{step}
		The unique solution $\mu(z,t)$ obtained in Step \ref{step 2} clearly satisfies the first equality of (\ref{!!}) and,  therefore we just need to check the second equality.
		
		We can locally express $\varphi(t)$ and  $\mu(z,t)$ as $\sum_{i\geq 1}\varphi_i t^i$ and $\sum_{i\geq 0}\mu_i t^i$, respectively, since they are both holomorphic in small $t$. Then the equation (\ref{int}) is equivalent to
		\begin{equation}
			\begin{cases}\label{closedness varphi1}
				\bp \v_1=0,\\
				\bar{\partial} \varphi_{k}=\mathop\sum\limits_{i+j=k \atop i, j \geq 1} \frac{1}{2}\left[\varphi_{i}, \varphi_{j}\right], \,\,\, k\geq 2.
			\end{cases}
		\end{equation}
		
		Rewrite equation (\ref{integral equation}) as
\[
\mu_k = -\partial (\partial \overline{\partial})^* \G_{\mathrm{BC}} \partial \Bigl( \sum_{\substack{i+j=k \\ i \geq 1, j \geq 0}} \varphi_i \lrcorner \mu_j \Bigr), \quad k \geq 1.
\]
		We aim to show  that
	\begin{equation}\label{solve}
		\bp \mu_k = -\p \Bigl( \sum_{\substack{i+j=k \\ i \geq 1, j \geq 0}} \varphi_i \lrcorner \mu_j \Bigr), \quad k \geq 1.
	\end{equation}
		When $k=1$, one has by \eqref{closedness varphi1}
		$$\label{aaa}
		\begin{aligned}
			\bp\p(\v_1\l\mu_0)
			&=-\p(\bp\v_1\l\mu_0+\v_1\l\bp\mu_0)=0.\\
		\end{aligned}
		$$
		Since $X$ satisfies $\mb B^{p,q+1}$, there exists some $\beta_1$ such that $\p(\v_1\l\mu_0)=\p\bp \beta_1$, implying $\H_{\mathrm {BC}}\p(\v_1\l\mu_0)=0$ because of (\ref{bc-ker}). We then obtain
		\begin{equation}\label{dbarmu1=0}
			\begin{aligned}
				\bp\mu_1&={(\p\bp)\left(\p\bp\right)}^*\G_{\mathrm {BC}}\p(\v_1\l\mu_{0})\\
				&=\G_{\mathrm {BC}}{(\p\bp)\left(\p\bp\right)}^*\p(\v_1\l\mu_{0})=\G_{\mathrm {BC}}\square_{{\mathrm {BC}}}\p(\v_1\l\mu_{0})\\&=\p(\v_1\l\mu_{0})-\H_{\mathrm {BC}}\p(\v_1\l\mu_0)=\p(\v_1\l\mu_0).\\
			\end{aligned}
		\end{equation}
		Here in (\ref{dbarmu1=0}), the second, third and fourth  equalities follow from the observation (\ref{case-2}),  (\ref{bc-Lap}) together with (\ref{closedness varphi1})  and  (\ref{identity}), respectively.
        
		Now, by induction hypothesis, assume that (\ref{solve}) holds for $k\leq {l-1}$, and then similarly for  $k=l$, one has
	\begin{align*}
		\bp \mu_k
		&= \partial \bp (\partial \overline{\partial})^{*} \G_{\mathrm{BC}} \partial \Bigl( \sum_{\substack{i+j=k \\ i \geq 1, j \geq 0}} \varphi_i \lrcorner \mu_j \Bigr) \\
		&= \partial \Bigl( \sum_{\substack{i+j=k \\ i \geq 1, j \geq 0}} \varphi_i \lrcorner \mu_j \Bigr)
		- \H_{\mathrm{BC}} \partial \Bigl( \sum_{\substack{i+j=k \\ i \geq 1, j \geq 0}} \varphi_i \lrcorner \mu_j \Bigr).
	\end{align*}
		Hence, it suffices to show
		\begin{equation}\label{harmonic-partial}
		\H_{\mathrm{BC}} \partial \Bigl( \sum_{\substack{i+j=k \\ i \geq 1, j \geq 0}} \varphi_i \lrcorner \mu_j \Bigr)=0.
		\end{equation} By explicit computation, we have
	\begin{align*}
		\bp \partial \Bigl( \sum_{\substack{i+j=k \\ i \geq 1, j \geq 0}} \varphi_i \lrcorner \mu_j \Bigr)
		&= -\partial \bp \Bigl( \sum_{\substack{i+j=k \\ i \geq 1, j \geq 0}} \varphi_i \lrcorner \mu_j \Bigr)= -\partial \sum_{\substack{i+j=l \\ i \geq 1, j \geq 0}} \bigl( \bp \varphi_i \lrcorner \mu_j + \varphi_i \lrcorner \bp \mu_j \bigr) \\
		&= -\partial \sum_{\substack{i+j=l \\ i \geq 1, j \geq 0}} \Bigl( \frac{1}{2} \sum_{a+b=i} [\varphi_a, \varphi_b] \lrcorner \mu_j
		- \sum_{a+b=j} \varphi_i \lrcorner \partial (\varphi_a \lrcorner \mu_b) \Bigr) \\
		&= -\partial \sum_{a+b+c=l} \left( - \partial (\varphi_a \lrcorner \varphi_b \lrcorner \mu_c) \right) = 0,
	\end{align*}
		where the last but one equality comes from Lemma \ref{aaaa} plus the $\p$-exactness of $\mu_j$.
		By applying the same method as before, one obtains \eqref{harmonic-partial}.  Thus, the solution $\mu(z,t)$ obtained in Step \ref{step 2} satisfies (\ref{!!}), i.e., it satisfies $d(e^{\iota_\varphi}(\mu(z,t)))=0$ on $X_0$ (or $X_t$).
		\begin{step}\label{step 4}
			{\rm\textbf{Extend $\mu(z,t)$ by $\rho_\v$}}.\end{step}
		Now, as $e^{\iota_\varphi}(\mu(z,t))$ is $d$-closed in $F^{p}A^{p+q}(X_t)$, one writes
		\begin{equation*}\label{decomposition}
			\begin{aligned}
				e^{\iota_\varphi}(\mu(z,t)) &=\alpha_t^{p, q}+\alpha_t^{p+1, q-1}+\cdots+\alpha_t^{p+q, 0} \\
				& \in A^{p, q}\left(X_{t}\right) \oplus A^{p+1, q-1}\left(X_{t}\right) \oplus \cdots \oplus A^{p+q, 0}\left(X_{t}\right),
			\end{aligned}
		\end{equation*}
		and gets $\alpha_t^{p, q}=\rho_\varphi(\mu(z,t)).$ Since $d\left(e^{\iota_\varphi}(\mu(z,t))\right)=0$ and $d=\partial_{t}+\bp_{t}$, we have $\bar{\partial}_{t}\left(\alpha_t^{p, q}\right)=0$ by comparing the types.
		
		The proof of  Theorem   \ref{main thm} is completed.
	\end{proof}

	\begin{remark}
		Notice that in \cite{WZ20},  Wei--Zhu  first obtained Theorem   \ref{main thm}  by use of $\p\bp$-Laplacian $\square_{\p\bp}$. The integral equation what they study  therein is
		\begin{equation}\label{ggg}
			\mu=\mu_0-\p{(\p\bp)}^*\G_{\p\bp}\p(\iota_{\varphi(t)}\mu),
		\end{equation}
		where  $\mu_0\in A^{p,q}(X_0)$ is the given $d$-closed $(p,q)$-form and $\G_{\p\bp}$ is the associated Green's operator of the $4^\textit{th}$ order real elliptic differential operator $\square_{\p\bp}$ defined by
		$$\square_{\p\bp}=\p\bp(\p\bp)^*+(\p\bp)^*(\p\bp).$$ In addition, they used Banach fixed point theorem to get the unique solution of (\ref{ggg}) motivated by \cite{LZ18,LZ20}. Our bounded linear operator $\mathcal{Q}_{\varphi(t)}$ in Step \ref{step 2} coincides with their contraction mapping.
	\end{remark}

	\section{Deformation invariance of Hodge numbers}
	\label{section 4}
	On deformation invariance of Hodge numbers, one only needs to consider a Kuranishi family of deformations
	$\pi:\mathcal{X}\rightarrow \Delta_{\epsilon}$  of $n$-dimensional compact complex manifolds over a small complex disk
	with the general fibers $X_t:=\pi^{-1}(t)$ throughout this section, and always fixes a Hermitian metric $g$ on the central fiber $X_0$.

	\subsection{Some examples}\label{basic strategy}
	
In this subsection,  we will state the following three examples that the deformation invariance of the $(p,q)$-Hodge number fails when one of the three conditions in Theorem \ref{pq def} is not satisfied, while the other two hold, with the help of Hodge,
	Bott--Chern and Aeppli numbers of manifolds in the Kuranishi family of the Iwasawa manifold (see \cite[Appendix]{Ag13} and \cite[Section 1.c]{Sc07}). It implies that the three
	conditions  in Theorem  \ref{pq def}  cannot be dropped for the sake of the validity of a theorem  concerning the deformation invariance
	of all the $(p,q)$-Hodge numbers.

    \vspace{0.4em}
	Let $\mathbb{I}_3$ be the Iwasawa manifold of complex dimension $3$ with $\varphi^1,\varphi^2,\varphi^3$
	denoted by the basis of the holomorphic one form $H^0(\mathbb{I}_3,\Omega^1)$ of $\mathbb{I}_3$, satisfying
	the relation \[ d\varphi^1 =0,\ d \varphi^2=0,\ d\varphi^3 = - \varphi^1 \w \varphi^2. \]
	And the convention $\varphi^{12\bar{1}\bar{2}\bar{3}}:= \varphi^1 \w \varphi^2 \w \overline{\varphi}^1\w \overline{\varphi}^2\w \overline{\varphi}^3$
	will be used for simplicity. In order to reduce the number of cases under consideration, notice that, on a compact complex Hermitian manifold $X$ of complex dimension $n$, for any integer $0\leq{p,q}\leq n$, the Hodge-$*$-operator induces an isomorphism between
	$$
	H_{\bar{\partial}}^{p, q}(X) \stackrel{\simeq}{\longrightarrow} H_{\partial}^{n-q, n-p}(X) \simeq \overline{H_{\bar{\partial}}^{n-p, n-q}(X)}.
	$$
	\begin{example}[$(p,q)=(1,0)$]\label{example 1}
		The injectivity of $\iota^{1,1}_{\mathrm {BC},\p}$ holds on $\mathbb{I}_3$
		with the deformation invariance of $h^{1,-1}_{\db_t}(X_t)$ trivially established but
		$\iota^{1,0}_{\mathrm {BC},\bp}$ is not surjective. In this case, $h^{1,0}_{\db_t}(X_t)$ are deformation variant.
	\end{example}
	\begin{proof}
		\cite[Appendix]{Ag13} presents that $h_{\mathrm {BC}}^{1,1}=4, h_{\p}^{1,1}=6$, and $h_{\mathrm {BC}}^{1,0}=2, h_{\bp}^{1,0}=3$. More precisely, one has by \cite[p.   6]{Sc07}:
		\[ \begin{aligned}
			H^{1,1}_{\mathrm {BC}}(X) &= \langle[\varphi^{1\bar{1}}]_{\mathrm {BC}}, [\varphi^{1\bar{2}}]_{\mathrm {BC}},[\varphi^{2\bar{1}}]_{\mathrm {BC}},[\varphi^{2\bar{2}}]_{\mathrm {BC}} \rangle, \\
			H^{1,1}_{\p}(X)\simeq{\overline{H_{\bp}^{1,1}(X)}} &= \langle[\varphi^{1\bar{1}}]_{\bp}, [\varphi^{1\bar{2}}]_{\bp},[\varphi^{2\bar{1}}]_{\bp},[\varphi^{2\bar{2}}]_{\bp}, [\varphi^{1\bar{3}}]_{\bp},[\varphi^{2\bar{3}}]_{\bp}\rangle, \\
		\end{aligned}\]
		which gives the injectivity of $\iota^{1,1}_{\mathrm {BC},\p}$. However, according to \cite[Last para. in Section 1.c]{Sc07}, $\iota^{1,0}_{\mathrm {BC},\bp}$ is not surjective. The deformation variance of $h^{1,0}_{\db_t}(X_t)$ follows from \cite[Appendix]{Ag13}.
	\end{proof}
	\begin{example}[$(p,q)=(2,0)$]\label{example 2}
		The surjectivity of $\iota^{2,0}_{\mathrm {BC},\bp}$ holds on $\mathbb{I}_3$
		with the deformation invariance of $h^{2,-1}_{\db_t}(X_t)$ trivially established but
		$\iota^{2,1}_{\mathrm {BC},\p}$ is not injective. In this case, $h^{2,0}_{\db_t}(X_t)$ are deformation variant.
	\end{example}
	\begin{proof}
		By \cite[p. 6]{Sc07}, we get
		\[ \begin{aligned}
			H^{2,0}_{\mathrm {BC}}(X) &= \langle[\varphi^{12}]_{\mathrm {BC}}, [\varphi^{13}]_{\mathrm {BC}},[\varphi^{23}]_{\mathrm {BC}} \rangle, \quad
			H^{2,0}_{\bp}(X)= \langle[\varphi^{12}]_{\bp}, [\varphi^{13}]_{\bp},[\varphi^{23}]_{\bp}\rangle, \\
		\end{aligned}\]
		which implies the surjectivity of $\iota^{2,0}_{\mathrm {BC},\bp}$. However, the following fact shows the non-injectivity of 	$\iota^{2,1}_{\mathrm {BC},\p}$:
		\[ \begin{aligned}
			H^{2,1}_{\mathrm {BC}}(X) &= \langle[\varphi^{12\bar{1}}]_{\mathrm {BC}}, [\varphi^{12\bar{2}}]_{\mathrm {BC}},[\varphi^{13\bar{1}}]_{\mathrm {BC}},[\varphi^{13\bar{2}}]_{\mathrm {BC}},[\varphi^{23\bar{1}}]_{\mathrm {BC}},[\varphi^{23\bar{2}}]_{\mathrm {BC}}\rangle, \\
			H^{2,1}_{\p}(X)\simeq{\overline{H_{\bp}^{1,2}(X)}} &= \langle[\varphi^{13\bar{1}}]_{\bp}, [\varphi^{13\bar{2}}]_{\bp},[\varphi^{13\bar{3}}]_{\bp},[\varphi^{23\bar{1}}]_{\bp}, [\varphi^{23\bar{2}}]_{\bp},[\varphi^{23\bar{3}}]_{\bp}\rangle. \\
		\end{aligned}\]
		Moreover, \cite[Appendix]{Ag13} gives the fact of the deformation variance of $h^{2,0}_{\db_t}(X_t).$
	\end{proof}
	\begin{example}[$(p,q)=(2,3)$]\label{example 3}
		The mapping $\iota^{2,3}_{\mathrm {BC},\bp}$ on $\mathbb{I}_3$ is surjective and  the injectivity of $\iota^{2,4}_{\mathrm {BC},\p}$  trivially holds,  but
		$h^{2,2}_{\db_t}(X_t)$ are deformation variant. In this case, $h^{2,3}_{\db_t}(X_t)$ are deformation variant.
	\end{example}
	\begin{proof}
		It is clear that $\iota^{2,3}_{\mathrm {BC},\bp}$ is surjective since by \cite[p. 6]{Sc07},
		\[ \begin{aligned}
			H^{2,3}_{\mathrm {BC}}(X) &= \langle[\varphi^{12\bar{1}\bar{2}\bar{3}}]_{\mathrm {BC}}, [\varphi^{13\bar{1}\bar{2}\bar{3}}]_{\mathrm {BC}},[\varphi^{23\bar{1}\bar{2}\bar{3}}]_{\mathrm {BC}} \rangle, \\
			H^{2,3}_{\bp}(X) &= \langle[\varphi^{12\bar{1}\bar{2}\bar{3}}]_{\bp}, [\varphi^{13\bar{1}\bar{2}\bar{3}}]_{\bp},[\varphi^{23\bar{1}\bar{2}\bar{3}}]_{\bp}\rangle. \\
		\end{aligned}\]
		The deformation variance of $h^{2,2}_{\db_t}(X_t)$ and $h^{2,3}_{\db_t}(X_t)$ can also be got from \cite[Appendix]{Ag13}.
	\end{proof}
	
	\begin{remark}\label{example remark}
		The three examples above partially coincide with those listed in \cite[Examples 3.2, 3.3, and 3.4]{RZ18}, where  the first author--Zhao  verified the indispensability of the three conditions therein to state a theorem  for the deformation invariance
		of all the $(p,q)$-Hodge numbers. More precisely, their three conditions are the injectivity of the mappings $\iota^{p+1,q}_{\mathrm {BC},\p}$, $\iota^{p,q+1}_{\db,A}$ and
		the deformation invariance of the $(p,q-1)$-Hodge number (see also our Remarks \ref{Remark 1}, \ref{Remark 2}).
	\end{remark}
	Certain special types, such as $(p,0)$ and $(0,q)$, may allow for a weakening of the conditions in Theorem~\ref{pq def}. 
	Hence, another two theorems follow, whose proofs will be given in Subsection \ref{p00q-section}. Note  that Theorem \ref{inv-0q} first appeared as \cite[Theorem 3.7]{RZ18}. So we can as well obtain \cite[Remark 3.8, Corollary 3.9]{RZ18}  as a consequence.
	\begin{theorem}\label{inv-p0}
		If $X_0$ satisfies $\mathbb{B}^{p,1}$ (i.e., the mapping $\iota_{\mathrm {BC},\p}^{p,1}$ is injective) and $\mathcal{S}^{p+1,0}$, then $h^{p,0}_{\db_t}(X_t)$ are deformation invariant.
	\end{theorem}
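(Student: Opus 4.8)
\textbf{Proof strategy for Theorem \ref{inv-p0}.}
The plan is to produce an injective linear map $H^{p,0}_{\db}(X_0)\to H^{p,0}_{\db_t}(X_t)$ for small $t$; together with the Kodaira--Spencer upper semi-continuity of $t\mapsto h^{p,0}_{\db_t}(X_t)$, this forces $h^{p,0}$ to be locally constant. First I would fix a $\db$-harmonic representative $\mu_0\in\mathcal H^{p,0}_{\db}(X_0)$ of a given class in $H^{p,0}_{\db}(X_0)$; since $\mu_0$ is of type $(p,0)$ and $\db$-closed, it is automatically $d$-closed (there is no $(p,-1)$-component), so $\mu_0$ is already a $d$-closed $(p,0)$-form. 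The hypothesis $\mathbb B^{p,1}$ is exactly the condition needed to invoke Theorem \ref{main p,q deformation thm} with $q=0$: it produces $\mu(z,t)\in A^{p,0}(X_0)$, smooth in $(z,t)$ and holomorphic in $t$, with $\mu(z,0)=\mu_0$, such that $d(e^{\iota_\varphi}(\mu(z,t)))=0$ and hence $\rho_\varphi(\mu(z,t))\in A^{p,0}(X_t)$ is $\db_t$-closed. This gives a candidate extension map $[\mu_0]_{\db}\mapsto[\rho_\varphi(\mu(z,t))]_{\db_t}$, and the content of the theorem is to prove it is well-defined and injective, using the extra hypothesis $\mathcal S^{p+1,0}$.

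For well-definedness, I would check that if $[\mu_0]_{\db}=0$, i.e. $\mu_0=\db\eta$ for some $\eta\in A^{p,-1}(X_0)$ — impossible unless $\mu_0=0$ — so in degree $q=0$ well-definedness is trivial provided the construction of $\mu(z,t)$ depends linearly on $\mu_0$, which it does by the integral equation \eqref{integral equation} and \eqref{zzz}. The real work is injectivity: suppose $[\mu_0]_{\db}\neq 0$ in $H^{p,0}_{\db}(X_0)$ but $\rho_\varphi(\mu(z,t_0))=\db_{t_0}\psi$ for some $\psi\in A^{p,-1}(X_{t_0})$; again this space vanishes, so $\rho_\varphi(\mu(z,t_0))=0$, which upon inverting the invertible operators in \eqref{ccccc} forces $\mu(z,t_0)=0$, hence $\mu_0=\mu(z,0)=0$ by continuity/holomorphicity in $t$, a contradiction. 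Wait — that argument proves too much and does not use $\mathcal S^{p+1,0}$ at all; the subtlety I am glossing over is that the target we must land in is $H^{p,0}_{\db_t}(X_t)=\ker(\db_t:A^{p,0}(X_t)\to A^{p,1}(X_t))$, which is the space of $\db_t$-\emph{closed} $(p,0)$-forms (these are holomorphic $p$-forms), and the map $[\mu_0]\mapsto[\rho_\varphi\mu(z,t)]$ lands there but we must also ensure the image has dimension $\geq h^{p,0}_{\db}(X_0)$. So the correct formulation of injectivity is: the linear map $\mathcal H^{p,0}_{\db}(X_0)\to H^0(X_t,\Omega^p_{X_t})$, $\mu_0\mapsto\rho_\varphi(\mu(z,t))$, has trivial kernel. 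Here is where $\mathcal S^{p+1,0}$ enters: it is needed to guarantee that the power-series solution $\mu(z,t)$ can actually be built and remains in $A^{p,0}$ (equivalently, that the obstruction terms $\p(\varphi\lrcorner\mu_j)$, which a priori live in $A^{p+1,0}$, are killed); more precisely $\mathbb B^{p,1}$ handles the $\db$-equation while $\mathcal S^{p+1,0}$ (i.e. solvability of $\db x=\p g$ for $\db$-closed $g\in A^{p,0}$) is what is needed to close the induction in the style of Step \ref{step 3}, ensuring $\mu(z,t)$ gives a genuine holomorphic $p$-form on $X_t$ after applying $\rho_\varphi$.

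The main obstacle, then, is the injectivity step: I must show that if $\rho_\varphi(\mu(z,t))=0$ identically for all small $t$ (or for $t$ in a sequence $\to 0$), then $\mu_0=0$. Because $\rho_\varphi$ is built from the invertible operators $e^{\iota_\varphi}$ and $(\1-\overline\varphi\varphi)^{-1}\Finv$ composed on the $(p,0)$-part (and on $(p,0)$-forms $\rho_\varphi$ reduces to $e^{\iota_\varphi}$ up to these invertible factors), $\rho_\varphi$ is injective pointwise, so $\rho_\varphi(\mu(z,t))=0\Rightarrow\mu(z,t)=0\Rightarrow\mu_0=0$. The genuinely delicate point is ensuring the extension is $\db_t$-\emph{closed} (so that it represents a class in $H^{p,0}_{\db_t}(X_t)$) \emph{and} that this closedness is what $\mathcal S^{p+1,0}$ buys us in the inductive construction — I would carry this out by re-running the four-step scheme of the proof of Theorem \ref{main p,q deformation thm} specialized to $q=0$, noting that the harmonic-projection vanishing \eqref{harmonic-partial} now needs only the weaker hypothesis $\mathcal S^{p+1,0}$ (solvability, not $\p$-exact solvability) together with $\mathbb B^{p,1}$, rather than the full $\mathbb B^{p,1}$ at both ends. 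Finally I would combine the resulting injection with Kodaira--Spencer upper semi-continuity to conclude $h^{p,0}_{\db_t}(X_t)\equiv h^{p,0}_{\db}(X_0)$ for $|t|$ small.
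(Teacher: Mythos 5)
Your overall architecture (produce a $d$-closed representative, extend it via Theorem \ref{main p,q deformation thm} with $q=0$, observe that injectivity is automatic in bidegree $(p,0)$ because $A^{p,-1}=0$, and conclude by upper semi-continuity) is the right one, and your injectivity discussion is correct and matches the paper. The genuine gap is at the very first step: it is \emph{false} that a $\db$-closed $(p,0)$-form is automatically $d$-closed. For such a $\mu_0$ one has $d\mu_0=\p\mu_0\in A^{p+1,0}(X_0)$, which need not vanish; on the Iwasawa manifold the holomorphic $1$-form $\varphi^3$ satisfies $d\varphi^3=-\varphi^1\wedge\varphi^2\neq0$, and there $\mathbb{B}^{1,1}$ holds while $h^{1,0}_{\db_t}(X_t)$ jumps (Example \ref{example 1}), so your first-pass argument, which uses only $\mathbb{B}^{p,1}$, cannot be repaired without $\mathcal{S}^{p+1,0}$ — exactly the "proves too much" symptom you noticed. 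The hypothesis $\mathcal{S}^{p+1,0}$ is what fills this hole, and in a very cheap way: a pure-type solution $x$ of $\db x=\p g$ with $\db$-closed $g\in A^{p,0}(X_0)$ would have to lie in $A^{p+1,-1}(X_0)=0$, so solvability forces $\p g=0$; hence under $\mathcal{S}^{p+1,0}$ every holomorphic $p$-form is already $d$-closed and can be fed directly into Theorem \ref{main p,q deformation thm} (this is the $(p,0)$-substitute for the canonical $d$-closed representative of Lemma \ref{lemma1}, which in the general case rests on $\mathcal{B}^{p+1,q}$).

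Your subsequent re-attribution of $\mathcal{S}^{p+1,0}$ to the power-series induction is also off target. The obstruction terms $\p(\varphi_i\lrcorner\mu_j)$ are of type $(p,1)$, not $(p+1,0)$, since $\varphi_i\lrcorner\mu_j\in A^{p-1,1}(X_0)$; the harmonic-projection vanishing \eqref{harmonic-partial} in Step \ref{step 3} is obtained from $\mathbb{B}^{p,1}$ alone. What the construction does require of $\mu_0$ is precisely $\p\mu_0=0$ (for the first equation of \eqref{!!}, since the integral equation gives $\p\mu=\p\mu_0$, and in the base-case computation), together with $\db\mu_0=0$; this is where $\mathcal{S}^{p+1,0}$ enters and nowhere else. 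Once this is corrected, your argument collapses to the paper's proof: $\mathcal{S}^{p+1,0}$ gives $d$-closedness of holomorphic $p$-forms, $\mathbb{B}^{p,1}$ runs Theorem \ref{main p,q deformation thm}, and the injectivity step (Proposition \ref{pq-pq-1}) is trivial in this bidegree.
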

	
	\begin{theorem}\label{inv-0q}
		If $X_0$ satisfies $\mathcal {B}^{1,q}$ (i.e., the mapping $\iota^{0,q}_{\mathrm {BC},\db}$ is surjective)
		and $h^{0,q-1}_{\db_t}(X_t)$  satisfies the deformation invariance, then
		$h^{0,q}_{\db_t}(X_t)$ are independent of $t$.
	\end{theorem}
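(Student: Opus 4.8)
The plan is to follow the general strategy already outlined in the introduction: since Kodaira--Spencer's upper semi-continuity theorem gives that $t \mapsto h^{0,q}_{\db_t}(X_t)$ is upper semi-continuous on $\Delta_\epsilon$, it suffices to establish lower semi-continuity, and for that I would exhibit an injective linear map $H^{0,q}_{\db}(X_0) \longrightarrow H^{0,q}_{\db_t}(X_t)$ for all sufficiently small $t$. First I would note that the hypothesis $\mathcal B^{1,q}$ (equivalently, the surjectivity of $\iota^{0,q}_{BC,\db}$, by the equivalence (3) in Subsection \ref{section Variations ofddbar}) guarantees in particular $\mathbb B^{0,q+1}$ vacuously is not needed; rather, since $p=0$, the relevant solvability condition for Theorem \ref{main p,q deformation thm} is automatic: any $\db$-closed $(0,q)$-form $\mu_0$ is already $d$-closed because there is no $(1,q)$-component, so $\p\mu_0 = 0$ trivially. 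Hence for a $\db$-harmonic representative $\mu_0 \in \mathbb H^{0,q}(X_0)$ of a class in $H^{0,q}_{\db}(X_0)$, Theorem \ref{main p,q deformation thm} (with $p=0$) produces $\mu(z,t) \in A^{0,q}(X_0)$, smooth in $(z,t)$ and holomorphic in small $t$, with $\mu(z,0) = \mu_0$ and $\rho_\varphi(\mu(z,t))$ being $\db_t$-closed on $X_t$. This defines the candidate extension map $[\mu_0]_{\db} \longmapsto [\rho_\varphi(\mu(z,t))]_{\db_t}$.

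Next I would check well-definedness and injectivity of this map. For well-definedness, I must show that if $\mu_0 = \db_0 \eta$ for some $\eta \in A^{0,q-1}(X_0)$, then $\rho_\varphi(\mu(z,t))$ is $\db_t$-exact; this is where the deformation invariance of $h^{0,q-1}_{\db_t}(X_t)$ enters, exactly as in the proof of \cite[Theorem 3.7]{RZ18}. The idea is that deformation invariance of $h^{0,q-1}$ together with upper semi-continuity of $h^{0,q}$ gives enough control to compare $\dim \im(\db_t : A^{0,q-1}(X_t) \to A^{0,q}(X_t))$ across fibers, so that the extension descends to cohomology; alternatively one argues directly that the extension of a $\db_0$-exact form lies in $\im \db_t$ by extending a primitive and using a dimension count. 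For injectivity, suppose $\rho_\varphi(\mu(z,t)) = \db_t \psi_t$ for some $\psi_t$; I would use the relation $\rho_\varphi^{-1}\db_t\rho_\varphi = \bp - \mathcal L^{1,0}_{\varphi(t)}$ (the displayed identity following \eqref{aaaaaa}, which for $p=0$ simplifies because the $(1,0)$-Lie derivative $\mathcal L^{1,0}_{\varphi(t)}$ lowers the holomorphic degree and so interacts with $(0,\bullet)$-forms in a controlled way) to pull the equation back to $X_0$, and then let $t \to 0$. Continuity in $t$ of all the operators involved and a rank/semicontinuity argument on the finite-dimensional cohomology — again leveraging the invariance of $h^{0,q-1}$ — forces the limiting class $[\mu_0]_{\db}$ to vanish.

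The main obstacle I expect is the injectivity step: the naive limit argument requires knowing that the primitives $\psi_t$ can be chosen to vary controllably (e.g.\ bounded in a suitable Sobolev or $C^{k,\alpha}$-norm) as $t \to 0$, which is not immediate and is precisely the point where the deformation invariance hypothesis on $h^{0,q-1}_{\db_t}(X_t)$ must be combined with elliptic estimates for the family of $\db_t$-Laplacians. I would handle this by taking $\psi_t$ to be the minimal-norm ($\db_t^*$-exact, or $G_{\db_t}$-produced) solution, invoking the smooth dependence of $G_{\db_t}$ on $t$ away from jumps of $h^{0,q}$ and $h^{0,q-1}$, and using the assumed constancy of $h^{0,q-1}$ to rule out the relevant jump; the remaining bookkeeping is then routine and parallels \cite[Theorem 3.7]{RZ18} and the $(p,q)$-case treated in Subsection \ref{sbs-pq}. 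Since this theorem is explicitly stated to first appear as \cite[Theorem 3.7]{RZ18}, I would also remark that the present proof merely re-derives it within the $d$-closed extension framework via $\rho_\varphi$ and Theorem \ref{main p,q deformation thm}, rather than via the $\bp$-extension map $e^{\iota_\varphi|\iota_{\overline\varphi}}$ used there.
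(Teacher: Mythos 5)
There is a genuine gap at the very first step of your extension construction. You assert that, because $p=0$, any $\db$-closed $(0,q)$-form $\mu_0$ is automatically $d$-closed (``$\p\mu_0=0$ trivially, since there is no $(1,q)$-component''). This is false: for a $(0,q)$-form, $\p\mu_0$ is a $(1,q)$-form, and $(1,q)$-forms certainly exist; $\db$-closedness gives no information about $\p\mu_0$. What \emph{is} automatic at $p=0$ is the condition $\mathbb B^{0,q+1}$, which holds vacuously because there are no $(-1,q+1)$-forms $g$ with $\p g\in A^{0,q+1}$ --- and that is the only simplification the paper records in its proof, which reduces Theorem \ref{inv-0q} to the $p=0$ case of Theorem \ref{inv-pq} (via Proposition \ref{thmpq} and Proposition \ref{pq-pq-1}). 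Since Theorem \ref{main p,q deformation thm} takes a $d$-closed form as input, your plan of feeding it the $\db$-harmonic representative breaks down here, and this is exactly the point where the hypothesis $\mathcal B^{1,q}$ must enter and where your proposal never uses it substantively: by Lemma \ref{lemma1} (with $p=0$, whose hypothesis is $\mathcal B^{1,q}$), one replaces the harmonic representative $\sigma$ by the canonical $d$-closed representative $\gamma_\sigma=\sigma+\db\beta_\sigma$ in the same Dolbeault class, with $\beta_\sigma=-(\p\db)^*\G_{BC}\p\sigma$ solving $\p\db\beta_\sigma=-\p\sigma$. As written, your argument would prove the theorem with no use of $\mathcal B^{1,q}$ at all, which cannot be correct.

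Once this is repaired, the remainder of your outline coincides with the paper's machinery: the extension $\mu(z,t)$ with $\rho_\v(\mu(z,t))$ being $\db_t$-closed comes from Theorem \ref{main p,q deformation thm}, and your injectivity step --- taking the minimal-norm primitive produced by $\G_t$, invoking \cite[Theorem 7]{KS60} together with the assumed invariance of $h^{0,q-1}_{\db_t}(X_t)$ to get differentiable dependence of $\G_t$ on $t$, and letting $t\to 0$ --- is precisely Proposition \ref{pq-pq-1}. Note also that your separate ``well-definedness'' discussion is not needed: the map $H^{0,q}_{\db}(X_0)\to H^{0,q}_{\db_t}(X_t)$ is defined on classes through the canonical, linear choice of representative (harmonic projection plus the Bott--Chern correction), so linearity and injectivity alone give $h^{0,q}_{\db}(X_0)\leq h^{0,q}_{\db_t}(X_t)$, which combined with upper semi-continuity finishes the proof; there is no need to verify that exact forms extend to exact forms, and the appeal to the identity ${\rho_{\varphi}}^{-1}\db_t\rho_\varphi=\bp-\mathcal L^{1,0}_{\varphi(t)}$ is likewise unnecessary.
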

	
	\begin{remark}\label{researchgate}
		Drawing on the work of S. Console--A. Fino--Y.-S. Poon \cite{CFP16}, the first author--Zhao constructed an example, namely a holomorphic family of nilmanifolds of complex dimension $5$,
		whose central fiber is endowed with an abelian complex structure.
		This family admits the deformation invariance of the $(p,0)$-Hodge numbers for $1 \leq p \leq 5$,
		but not the $(1,1)$-Hodge number or $(1,1)$-Bott--Chern number. By verification, this example is also applicable to our Theorem \ref{inv-p0}, which shows the function of Theorem \ref{inv-p0} possibly beyond Kodaira--Spencer's squeeze \cite[Theorem 13]{KS60} in this case. One can see  Example 3.11  in the update version on the first author's researchgate \cite{RZ} for more details.
	\end{remark}

	\subsection{Proof of Theorem \ref{pq def}}\label{sbs-pq}
	We aim to prove Theorem \ref{pq def} in this subsection, which can be restated as the following theorem thanks to the equivalent statements \eqref{equ 1} and \eqref{equ3} displayed  in Subsection \ref{section Variations ofddbar}.
	
	\begin{theorem}\label{inv pq'''''''}
		If the central fiber $X_0$ satisfies both
		$\mathbb{B}^{p,q+1}$ and $\mathcal{B}^{p+1,q}$ with
		the deformation invariance of $h^{p,q-1}_{\db_t}(X_t)$ established,
		then $h^{p,q}_{\db_t}(X_t)$ are independent of $t$.
	\end{theorem}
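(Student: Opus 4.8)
The plan is to combine the Kodaira--Spencer semicontinuity theorem with an injective extension map built out of Theorem \ref{main p,q deformation thm}. By Kuranishi's completeness theorem \cite{Ku64} (see Subsection \ref{subsection-Kuranishi family}) we may assume $\pi\colon\mathcal X\to\Delta_\epsilon$ is the Kuranishi family of $X_0$, carrying the holomorphic family $\{\varphi(t)\}_{|t|\le\epsilon}$ of integrable Beltrami differentials, and we fix a smooth family of Hermitian metrics $g_t$ on $X_t$ with $g_0=g$. Since $t\mapsto h^{p,q}_{\db_t}(X_t)$ is always upper semi-continuous \cite[Theorem 4]{KS60}, it suffices to exhibit, for every sufficiently small $t$, an injective $\mathbb C$-linear map $\Psi_t\colon H^{p,q}_{\db}(X_0)\hookrightarrow H^{p,q}_{\db_t}(X_t)$. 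I would construct it as follows. Because $X_0$ satisfies $\mathcal B^{p+1,q}$, i.e.\ $\iota^{p,q}_{BC,\db}$ is surjective, Lemma \ref{lemma1} supplies a canonical, linearly-varying $d$-closed representative $\mu_0=L([\alpha]_{\db})$ of any class $[\alpha]_{\db}\in H^{p,q}_{\db}(X_0)$. Because $X_0$ satisfies $\mathbb B^{p,q+1}$, Theorem \ref{main p,q deformation thm} then produces $\mu(z,t)=(\mathrm I-\mathcal Q_{\varphi(t)})^{-1}\mu_0\in A^{p,q}(X_0)$, smooth in $(z,t)$ and holomorphic in $t$, with $\mu(z,0)=\mu_0$ and $\rho_{\varphi(t)}(\mu(z,t))\in A^{p,q}(X_t)$ being $\db_t$-closed. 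Setting $\Psi_t([\alpha]_{\db}):=[\rho_{\varphi(t)}(\mu(z,t))]_{\db_t}$ yields a well-defined $\mathbb C$-linear map, being the composite of the linear maps $L$, $(\mathrm I-\mathcal Q_{\varphi(t)})^{-1}$, $\rho_{\varphi(t)}$ and the Dolbeault projection.

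So everything comes down to the injectivity of $\Psi_t$ for small $t$, which I would establish by a compactness argument. Suppose it fails; then there are $t_k\to 0$ and unit vectors $v_k\in\ker\Psi_{t_k}\subseteq H^{p,q}_{\db}(X_0)$, and after passing to a subsequence $v_k\to v$ with $\|v\|=1$. Put $\mu_0^{(k)}:=L(v_k)$, $\mu_0:=L(v)$, and let $\mu^{(k)}(z,t)=(\mathrm I-\mathcal Q_{\varphi(t)})^{-1}\mu_0^{(k)}$ be the corresponding extensions. Since $L$, $\mathcal Q_{\varphi(t)}$ and $\rho_{\varphi(t)}$ depend continuously on their arguments and $\varphi(0)=0$, $\mathcal Q_{0}=0$, $\rho_{\varphi(0)}=\mathrm{id}$, the regularity and continuous-dependence statements in the proof of Theorem \ref{main p,q deformation thm} give $\rho_{\varphi(t_k)}(\mu^{(k)}(z,t_k))\to\mu_0$ in $C^\infty$ on the common underlying smooth manifold. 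By the assumption $v_k\in\ker\Psi_{t_k}$ the form $\rho_{\varphi(t_k)}(\mu^{(k)}(z,t_k))$ is $\db_{t_k}$-exact, and it is also $\db_{t_k}$-closed, so its $\square_{\db_{t_k}}$-harmonic part vanishes and the minimal-norm primitive
\[
\beta_k:=\G^{(p,q-1)}_{\db_{t_k}}\,\db_{t_k}^{*}\!\left(\rho_{\varphi(t_k)}(\mu^{(k)}(z,t_k))\right)\in A^{p,q-1}(X_{t_k})
\]
satisfies $\db_{t_k}\beta_k=\rho_{\varphi(t_k)}(\mu^{(k)}(z,t_k))$; here $\G^{(p,q-1)}_{\db_{t_k}}$ is the $\db_{t_k}$-Green operator on $(p,q-1)$-forms and we used the commutation $\db_{t_k}^{*}\G^{(p,q)}_{\db_{t_k}}=\G^{(p,q-1)}_{\db_{t_k}}\db_{t_k}^{*}$.

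At this point the hypothesis that $h^{p,q-1}_{\db_t}(X_t)$ is deformation invariant enters decisively: by Kodaira--Spencer theory \cite{KS60,Ko86} the $\db_t$-harmonic projection and Green operator on $(p,q-1)$-forms then vary $C^\infty$-ly in $t$ near $0$, so $\G^{(p,q-1)}_{\db_{t_k}}\to\G^{(p,q-1)}_{\db_0}$, while $\db_{t_k}^{*}\to\db_0^{*}$ (fixed smooth metrics) and $\db_{t_k}\to\db_0$ as operators on the common smooth manifold. Hence $\beta_k\to\beta_0:=\G^{(p,q-1)}_{\db_0}\db_0^{*}\mu_0$ in $C^\infty$, and letting $k\to\infty$ in $\db_{t_k}\beta_k=\rho_{\varphi(t_k)}(\mu^{(k)}(z,t_k))$ gives $\db_0\beta_0=\mu_0$. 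But, by Hodge theory on $(X_0,g)$ together with $\db_0\mu_0=0$,
\[
\db_0\beta_0=\db_0\G^{(p,q-1)}_{\db_0}\db_0^{*}\mu_0=\G^{(p,q)}_{\db_0}\db_0\db_0^{*}\mu_0=\G^{(p,q)}_{\db_0}\square_{\db_0}\mu_0=\mu_0-\mathbb H_{\db_0}\mu_0 ,
\]
so $\mathbb H_{\db_0}\mu_0=0$, i.e.\ $\mu_0$ is $\db_0$-exact. Since $\mu_0=L(v)$ represents $v$, this forces $v=0$, contradicting $\|v\|=1$. Thus $\Psi_t$ is injective for small $t$, whence $h^{p,q}_{\db_t}(X_t)\ge h^{p,q}_{\db}(X_0)$, and with upper semicontinuity we get equality; the proof concludes.

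The main obstacle is precisely the last two paragraphs: the bounding forms $\beta_k$ live on the varying complex manifolds $X_{t_k}$ and a priori carry no uniform control, and it is exactly the deformation invariance of $h^{p,q-1}_{\db_t}$ — equivalently, the continuity at $t=0$ of the $(p,q-1)$-Green operator — that permits a coherent choice and the passage to the limit; everything else is bookkeeping and the inputs of Lemma \ref{lemma1} and Theorem \ref{main p,q deformation thm}. One may alternatively transport $\db_t\beta_t=\rho_{\varphi(t)}(\mu(z,t))$ back to $X_0$ via the conjugation $\rho_{\varphi(t)}^{-1}\db_t\rho_{\varphi(t)}=\db-\mathcal L^{1,0}_{\varphi(t)}$ (cf.\ \cite{Xi21b}) before taking limits, but the analytic heart — controlling the primitive as $t$ varies — is unchanged.
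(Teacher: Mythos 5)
Your proposal is correct and follows essentially the same route as the paper's: the canonical $d$-closed representative from Lemma \ref{lemma1} (using $\mathcal{B}^{p+1,q}$), the $\db_t$-closed extension $\rho_\varphi(\mu(z,t))$ from Theorem \ref{main p,q deformation thm} (using $\mathbb{B}^{p,q+1}$), and injectivity of the extension map deduced from the differentiable dependence in $t$ of the Green's operator on $(p,q-1)$-forms, which is exactly what the deformation invariance of $h^{p,q-1}_{\db_t}(X_t)$ plus \cite[Theorem 7]{KS60} provides. Your sequential-compactness formulation of the injectivity step (normalized kernel classes $v_k$ at $t_k\to 0$) is only a minor, slightly more explicit repackaging of the limit argument the paper gives in Proposition \ref{pq-pq-1}.
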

	
	We have described our strategy in the Introduction \ref{Introduction}, so the next lemma, motivated by \cite[Lemma 3.1]{Po19}  is crucial, which can help us to achieve the first step. For the convenience of the readers, we recall the proof.
	\begin{lemma}[{\cite[Lemma 3.13]{RZ18}}]\label{lemma1}
		Assume that the compact complex manifold $X$ satisfies $\mathcal{B}^{p+1,q}$. Then each Dolbeault class $[\sigma]_{\db}$ of the type $(p,q)$ can be canonically
		represented by a uniquely-chosen $d$-closed $(p,q)$-form $\gamma_{\sigma}$.
	\end{lemma}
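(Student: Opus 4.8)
The plan is to produce, inside each Dolbeault class $[\sigma]_{\db}$, a representative that is not merely $\bp$-closed but $d$-closed, and then to show this representative is forced to be unique. Starting from an arbitrary representative $\sigma \in A^{p,q}(X)$ of the given class, I would first note that $\bp\sigma = 0$ but a priori $\p\sigma \neq 0$; the idea is to correct $\sigma$ by a $\bp$-exact term so as to kill $\p\sigma$. The natural candidate correction is $\gamma_\sigma := \sigma - \db\tau$ for a suitable $\tau \in A^{p,q-1}(X)$, chosen so that $\p\gamma_\sigma = 0$, i.e. $\p\db\tau = \p\sigma$.

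The key observation is that $\p\sigma \in A^{p+1,q}(X)$ is of the form $\p g$ (here $g = \sigma$) and is $\db$-closed, since $\db\p\sigma = -\p\db\sigma = 0$. This is exactly the shape of data handled by the hypothesis $\mathcal{B}^{p+1,q}$: for any $\db$-closed $g \in A^{p,q}(X)$ (so that $p-1$ in the definition is our $p$), the equation $\db x = \p g$ has a $\p$-exact solution of pure type. Applying this with $g = \sigma$ gives $x \in A^{p+1,q-1}(X)$ with $\db x = \p\sigma$ and $x = \p\eta$ for some $\eta$; equivalently the $\p\db$-equation $\p\db\eta = \p\sigma$ (up to a sign) is solvable. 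Thus there exists $\tau$ with $\p\db\tau = \p\sigma$, and setting $\gamma_\sigma := \sigma - \db\tau$ we get $\bp\gamma_\sigma = \bp\sigma = 0$ and $\p\gamma_\sigma = \p\sigma - \p\db\tau = 0$ (using $\db\db\tau=0$ and the sign bookkeeping $\p\db = -\db\p$), so $\gamma_\sigma$ is $d$-closed and represents $[\sigma]_{\db}$.

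For canonicity and uniqueness, I would fix a Hermitian metric and take the \emph{distinguished} solution rather than an arbitrary one: by Lemma \ref{ddbar-eq}, whenever the $\p\db$-equation $\p\db x = \alpha$ is solvable, it has the canonical solution $(\p\db)^*\G_{BC}\alpha$, which minimizes the $L^2$-norm. So I would define $\gamma_\sigma := \sigma - \db\big((\p\db)^*\G_{BC}\,\p\sigma\big)$ after first checking that $\p\db$ applied to $(\p\db)^*\G_{BC}\p\sigma$ indeed returns $\p\sigma$, which follows from solvability plus the identity $\1 = \mathbb{H}_{BC} + \square_{BC}\G_{BC}$ together with $\mathbb{H}_{BC}\,\p\sigma = 0$ (the latter because $\p\sigma$ is $\p$-exact, hence orthogonal to $\ker\square_{BC} = \ker\p \cap \ker\db \cap \ker(\p\db)^*$). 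To see $\gamma_\sigma$ depends only on the class $[\sigma]_{\db}$ and not on the chosen representative: if $\sigma' = \sigma + \db\lambda$, then $\p\sigma' = \p\sigma + \p\db\lambda$, and I must check $(\p\db)^*\G_{BC}\p\sigma' - (\p\db)^*\G_{BC}\p\sigma = (\p\db)^*\G_{BC}\p\db\lambda$ differs from $\lambda$ by something killed under $\db(\cdot)$ up to the ambiguity; concretely $\gamma_{\sigma'} = \sigma' - \db(\p\db)^*\G_{BC}\p\sigma' = \sigma + \db\lambda - \db(\p\db)^*\G_{BC}\p\db\lambda - \db(\p\db)^*\G_{BC}\p\sigma$, and using $(\p\db)^*\G_{BC}\p\db\lambda = (\p\db)^*\G_{BC}\square_{BC}\cdots$ type identities (via $\G_{BC}\p\db = \p\db\G_A$ and $(\p\db)^*\G_{BC} = \G_A(\p\db)^*$ from Lemma \ref{ddbar-eq}) one finds $\db\lambda - \db(\p\db)^*\G_{BC}\p\db\lambda = 0$, giving $\gamma_{\sigma'} = \gamma_\sigma$.

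The main obstacle I expect is the bookkeeping in this last well-definedness check: one must verify that the correction term $\db(\p\db)^*\G_{BC}\p\db\lambda$ exactly cancels $\db\lambda$, which requires the commutation identities $\G_{BC}\p\db = \p\db\G_A$, $(\p\db)^*\G_{BC} = \G_A(\p\db)^*$ from Lemma \ref{ddbar-eq} and a careful analysis of which pieces of $\lambda$ lie in $\ker\square_A$. The $d$-closedness of $\gamma_\sigma$ itself is comparatively painless once $\mathcal{B}^{p+1,q}$ (equivalently the surjectivity of $\iota^{p+1,q}_{BC,\db}$ by the equivalence \eqref{equ3}) is in hand; the real content is packaging the canonical Bott--Chern solution so that the assignment $[\sigma]_{\db} \mapsto \gamma_\sigma$ is genuinely well defined and linear.
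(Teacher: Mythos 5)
Your existence step is essentially the paper's own argument: $\mathcal{B}^{p+1,q}$ applied to $g=\sigma$ makes the $\p\db$-equation $\p\db\tau=\p\sigma$ solvable, Lemma \ref{ddbar-eq} supplies the canonical minimal-norm solution $(\p\db)^{*}\G_{BC}\p\sigma$, and then $\gamma_\sigma=\sigma-\db(\p\db)^{*}\G_{BC}\p\sigma$ is $d$-closed and $\db$-cohomologous to $\sigma$. (One small slip there: $\mathbb{H}_{BC}\p\sigma=0$ follows from solvability, i.e.\ $\p\sigma\in\im\,\p\db$, which is orthogonal to $\ker\square_{BC}$; $\p$-exactness alone does not give this, since a Bott--Chern harmonic form need not be $\p^{*}$-closed on a non-K\"ahler manifold.)

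The genuine gap is in your canonicity argument. You start from an \emph{arbitrary} representative and claim that for $\sigma'=\sigma+\db\lambda$ one has $\db\lambda-\db(\p\db)^{*}\G_{BC}\p\db\lambda=0$. This identity is false in general: $(\p\db)^{*}\G_{BC}\p\db\lambda$ is only a solution of $\p\db x=\p\db\lambda$, so it agrees with $\lambda$ modulo $\ker(\p\db)$, which is much larger than $\ker\db$; the commutation identities $\G_{BC}\p\db=\p\db\G_{A}$ and $(\p\db)^{*}\G_{BC}=\G_{A}(\p\db)^{*}$ cannot close this gap. Concretely, on a complex $2$-torus (which satisfies the $\p\db$-lemma, hence all the hypotheses) take $\lambda=f\,dz^{1}$ with $f$ a nonconstant function of $z^{1},\bar z^{1}$ alone; then $\p\db\lambda=0$, so your correction term vanishes, while $\db\lambda=\bp f\wedge dz^{1}\neq 0$. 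Thus $\gamma_{\sigma+\db\lambda}=\gamma_{\sigma}+\db\lambda\neq\gamma_\sigma$, and the assignment $[\sigma]_{\db}\mapsto\gamma_\sigma$ as you set it up is not well defined. The paper avoids this entirely by first fixing the unique $\square_{\bp}$-harmonic representative of $[\sigma]_{\db}$ (for the chosen Hermitian metric) and only then applying the canonical correction $-\db(\p\db)^{*}\G_{BC}\p\sigma$; the ``canonical, uniquely-chosen'' $\gamma_\sigma$ of the lemma is the output of this two-step choice (harmonic representative plus minimal-$L^{2}$ solution), not of a representative-independent formula. Your proof is repaired by inserting that first step and deleting the false cancellation claim.
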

	\begin{proof}
		We first choose the unique harmonic representative $\sigma$ of $[\sigma]_{\bar{\partial}}$. The $d$-closed representative $\gamma_{\sigma} \in A^{p,q}(X)$ then satisfies
		\[
		\gamma_{\sigma} = \sigma + \bar{\partial} \beta_{\sigma}
		\]
		for some $\beta_{\sigma} \in A^{p,q-1}(X)$ solving
		\[
		\partial \bar{\partial} \beta_{\sigma} = - \partial \sigma.
		\]
		Existence follows from the assumption on $X$, and uniqueness with minimal $L^2$-norm is given by \cite[Lemma 3.12]{RZ18}, allowing us to take
		\[
		\beta_{\sigma} = - (\partial \bar{\partial})^{*} \mathbb{G}_{\mathrm{BC}} \partial \sigma.
		\]
	\end{proof}
	By Theorem   \ref{main thm}, one has:
	\begin{proposition}\label{thmpq}
		Assume that the compact complex manifold $X_0$ satisfies $\mathbb{B}^{p,q+1}$ and $\mathcal{B}^{p+1,q}$. Then for each Dolbeault class in $H^{p,q}_{\db}(X_0)$ with the unique canonical $d$-closed representative $\mu_0$
		given as Lemma \ref{lemma1}, we can get $\mu(z,t)\in A^{p,q}(X_0)$ with $\mu(z,0)=\mu_0(z)$, which is smooth in $(z,t)$ and holomorphic in $t$  for small $t$,  such that  $\rho_\v(\mu(z,t))$
		is $\bar{\partial}_{t}$-closed in $A^{p, q}\left(X_{t}\right).$
	\end{proposition}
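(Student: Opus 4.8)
The plan is to obtain this proposition as a direct consequence of Lemma~\ref{lemma1} and Theorem~\ref{main p,q deformation thm}, with no genuinely new argument required. First I would apply Lemma~\ref{lemma1}: since $X_0$ satisfies $\mathcal{B}^{p+1,q}$, every Dolbeault class $[\sigma]_{\db}\in H^{p,q}_{\db}(X_0)$ possesses a uniquely-chosen $d$-closed representative $\mu_0=\gamma_\sigma\in A^{p,q}(X_0)$, explicitly $\mu_0=\sigma+\db\beta_\sigma$ where $\sigma$ is the $\db$-harmonic representative of the class and $\beta_\sigma=-(\p\db)^*\G_{BC}\p\sigma$ is the $L^2$-minimal solution of $\p\db\beta_\sigma=-\p\sigma$. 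The hypothesis $\mathcal{B}^{p+1,q}$ is exactly what makes this $\p\db$-equation solvable by a pure-type form, so that Lemma~\ref{ddbar-eq} applies and pins down the canonical $\beta_\sigma$.

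Next I would feed the $d$-closed $(p,q)$-form $\mu_0$ into Theorem~\ref{main p,q deformation thm}, which is applicable precisely because $X_0$ satisfies the remaining hypothesis $\mathbb{B}^{p,q+1}$. That theorem produces $\mu(z,t)\in A^{p,q}(X_0)$ with $\mu(z,0)=\mu_0(z)$, smooth in $(z,t)$ and holomorphic in small $t$, satisfying $d\bigl(e^{\iota_\varphi}(\mu(z,t))\bigr)=0$, and moreover asserts that the extension $\rho_\v(\mu(z,t))\in A^{p,q}(X_t)$ is $\bar{\partial}_{t}$-closed. Composing these two invocations yields exactly the $\mu(z,t)$ claimed in the proposition, including the initial condition $\mu(z,0)=\mu_0$ and the regularity in $(z,t)$, so the proof is complete at that point.

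The two hypotheses play complementary and separable roles: $\mathcal{B}^{p+1,q}$ upgrades the given $\db$-cohomology class to a canonical \emph{$d$-closed} representative, which is the input that the $d$-closed extension machinery of Section~\ref{deformation p,q Bottchern} needs in order to start, while $\mathbb{B}^{p,q+1}$ is what guarantees solvability of the integral equation~\eqref{integral equation} in Step~\ref{step 2} of Theorem~\ref{main p,q deformation thm} and hence forces $\rho_\v(\mu(z,t))$ into $\ker\bar{\partial}_{t}$. Consequently there is essentially no obstacle in this proposition itself; whatever difficulty is involved has already been absorbed into Theorem~\ref{main p,q deformation thm} --- the estimate $\|\mathcal{Q}_{\varphi(t)}\|<1$ on the completion $\mathscr{E}$, the convergence of the power series $\mu(z,t)$ in $t$, and the inductive verification of~\eqref{solve} via Lemma~\ref{aaaa} --- and into Lemma~\ref{lemma1}. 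The only point demanding a line of care is to observe that $\mu(z,0)=\mu_0$ is itself part of the conclusion of Theorem~\ref{main p,q deformation thm}, so the initial condition is automatic rather than something to be imposed by hand.
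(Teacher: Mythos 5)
Your proposal is correct and follows exactly the paper's route: the paper derives this proposition directly from Lemma \ref{lemma1} (using $\mathcal{B}^{p+1,q}$ to produce the canonical $d$-closed representative $\mu_0$) together with Theorem \ref{main p,q deformation thm} (using $\mathbb{B}^{p,q+1}$ to extend $\mu_0$ so that $\rho_\v(\mu(z,t))$ is $\db_t$-closed), with no additional argument. Your remarks on the separable roles of the two hypotheses match the paper's structure.
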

	
	According to the strategy as we represented after  the statement of  Theorem \ref{main thm}, Theorem \ref{inv pq'''''''} follows immediately from the following result:
	\begin{proposition}\label{pq-pq-1}
		If the $\db$-extension of $H^{p,q}_{\db}(X_0)$
		as in Proposition \ref{thmpq} holds for a complex manifold $X_0$,
		then the deformation invariance of $h^{p,q-1}_{\db_t}(X_t)$
		assures that the extension map
		$$H^{p,q}_{\db}(X_0) \longrightarrow H^{p,q}_{\db_t}(X_t):[\mu_0]_{\db} \longmapsto [\rho_\v(\mu(z,t))]_{\db_t}$$
		is injective.
	\end{proposition}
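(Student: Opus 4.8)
The plan is to show that the $\db$-extension map
$$H^{p,q}_{\db}(X_0) \longrightarrow H^{p,q}_{\db_t}(X_t):[\mu_0]_{\db} \longmapsto [\rho_\v(\mu(z,t))]_{\db_t}$$
is well-defined and injective, using the rigidity afforded by the deformation invariance of $h^{p,q-1}_{\db_t}(X_t)$. First I would check that the map is well-defined, i.e., independent of the chosen representative: by Lemma \ref{lemma1}, each class in $H^{p,q}_{\db}(X_0)$ has a \emph{unique} canonical $d$-closed representative $\mu_0$ (the harmonic representative corrected by $-(\p\db)^*\G_{BC}\p\sigma$), and the solution $\mu(z,t)$ of the integral equation \eqref{integral equation} is uniquely determined by $\mu_0$ via \eqref{zzz}; so the assignment $[\mu_0]_{\db}\mapsto[\rho_\v(\mu(z,t))]_{\db_t}$ depends only on the class. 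It is also linear in $\mu_0$ by linearity of \eqref{integral equation} in the data and of $\rho_\v$.

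For injectivity, suppose $[\mu_0]_{\db}\neq 0$ in $H^{p,q}_{\db}(X_0)$ but $[\rho_\v(\mu(z,t))]_{\db_t}=0$ in $H^{p,q}_{\db_t}(X_t)$ for small $t$; I want to derive a contradiction. The key idea is a dimension count: consider the finite-dimensional families of cohomology classes and use upper semicontinuity together with the invariance hypothesis at level $(p,q-1)$ to force the extension map to be injective. Concretely, the plan is to argue that the extension map $H^{p,q}_{\db}(X_0)\to H^{p,q}_{\db_t}(X_t)$ is injective for $t$ small provided every nonzero class maps to a nonzero class; so one reduces to showing that if $\rho_\v(\mu(z,t))=\db_t\eta_t$ for some $\eta_t\in A^{p,q-1}(X_t)$, then already $\mu_0$ was $\db$-exact on $X_0$. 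Here is where the invariance of $h^{p,q-1}$ enters: pulling $\eta_t$ back to $X_0$ (via $\rho_\v^{-1}$ or the inverse of the relevant extension operator, using the identity ${\rho_{\varphi}}^{-1} \b{\p}_t{\rho_{\varphi}}=\bp-\mathcal{L}^{1,0}_{\varphi(t)}$ recorded after \eqref{aaaaaa}) converts the equation $\rho_\v(\mu(z,t))=\db_t\eta_t$ into an equation on $X_0$ of the form $\mu(z,t)=(\bp-\mathcal L^{1,0}_{\varphi(t)})\tilde\eta_t$; expanding in powers of $t$ and matching the lowest-order term against the invariance of $H^{p,q-1}_{\db_t}(X_t)$ (which guarantees that the $\db_t$-harmonic forms of type $(p,q-1)$ vary smoothly and form a bundle, so that $\eta_t$ can be chosen smoothly in $t$ and specialized to $t=0$) yields $\mu_0=\db\eta_0$, contradicting $[\mu_0]_{\db}\neq 0$.

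The main obstacle I anticipate is the bookkeeping in this last step: one must ensure that the potential $\eta_t$ solving $\rho_\v(\mu(z,t))=\db_t\eta_t$ can be chosen to depend smoothly (or at least continuously) on $t$ down to $t=0$, which is exactly what the deformation invariance of $h^{p,q-1}_{\db_t}(X_t)$ buys via Kodaira--Spencer's theory of harmonic forms varying in a family (the Green's operators $\G_{\db_t}$ depend smoothly on $t$ once the relevant Hodge number is constant); then one takes $\eta_0:=\lim_{t\to 0}(\text{pullback of }\eta_t)$ and checks $\db\eta_0=\mu_0$ by continuity. A secondary subtlety is that $\rho_\v$ is only an isomorphism of $(p,q)$-forms (not of the $\db$/$\db_t$ complexes), so one must carefully use the conjugation formula relating $\db_t$ to $\bp-\mathcal L^{1,0}_{\varphi(t)}$ rather than expecting $\rho_\v$ to intertwine the two differentials directly. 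Once the smooth family of potentials is in hand, the contradiction is immediate and the injectivity — hence, combined with Kodaira--Spencer upper semicontinuity, the deformation invariance of $h^{p,q}_{\db_t}(X_t)$ — follows.
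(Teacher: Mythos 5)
Your overall strategy is the same as the paper's: assume $\rho_\v(\mu(z,t))=\db_t\eta_t$ for small $t\neq 0$, use the deformation invariance of $h^{p,q-1}_{\db_t}(X_t)$ to get, via \cite[Theorem 7]{KS60}, the differentiable dependence on $t$ of the Green's operator $\G_t$ acting on $A^{p,q-1}(X_t)$, and then let $t\to 0$ to conclude that $\mu_0$ is $\db$-exact, contradicting $[\mu_0]_{\db}\neq 0$. However, there is a genuine gap at the decisive step: you propose to set $\eta_0:=\lim_{t\to 0}(\text{pullback of }\eta_t)$ and conclude $\db\eta_0=\mu_0$ "by continuity", but an \emph{arbitrary} family of potentials $\eta_t$ (which is only assumed to exist for $t\neq 0$) has no reason whatsoever to converge, or even to be measurable in $t$; the smoothness of $\G_t$ does not by itself rescue a limit of $\eta_t$. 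What the hypothesis actually buys is the ability to \emph{replace} $\eta_t$ by the canonical Hodge-theoretic potential: from $\rho_\v(\mu(z,t))=\db_t\eta_t$ and the Hodge decomposition on $A^{p,q-1}(X_t)$ one gets the identity
\[
\rho_\v(\mu(z,t))=\db_t\G_t\big(\dbs_t\,\rho_\v(\mu(z,t))\big),
\]
whose right-hand side involves only $\rho_\v(\mu(z,t))$ (smooth in $t$, including $t=0$) and $\G_t$ on $(p,q-1)$-forms (differentiable in $t$ by the invariance hypothesis). Passing to the limit in \emph{this} identity gives $\mu_0=\db\big(\G_0\dbs\mu_0\big)$, hence $\db$-exactness of $\mu_0$. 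Without this substitution your limiting step does not go through, so you should make it explicit.

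A secondary remark: the detour through the conjugation formula ${\rho_{\varphi}}^{-1}\db_t\rho_{\varphi}=\bp-\mathcal{L}^{1,0}_{\varphi(t)}$ and a power-series matching on $X_0$ is unnecessary; the paper's argument (and the fixed version of yours) runs entirely on $X_t$ and only uses continuity as $t\to 0$, so no order-by-order bookkeeping is needed. Your remarks on well-definedness via the unique canonical representative of Lemma \ref{lemma1} and the uniqueness of the solution of \eqref{integral equation} are fine and consistent with the paper's setup.
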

	\begin{proof}
		Fix a smooth family of Hermitian metrics
		$\{h_t\}_{t \in \Delta_{\epsilon}}$ for the infinitesimal
		deformation $\pi: \mathcal{X} \rightarrow \Delta_{\epsilon}$ of
		$X_0$. So, if the Hodge numbers $h^{p,q-1}_{\db_t}(X_t)$ are deformation
		invariant, the Green's operator $\G_t$, acting on the
		$A^{p,q-1}(X_t)$, depends differentiably with respect to $t$ from
		\cite[Theorem 7]{KS60}. Applying this, one guarantees
		that this extension map can not send a non-zero class in
		$H^{p,q}_{\db}(X_0)$ to a zero class in $H^{p,q}_{\db_t}(X_t)$.
		
		If one supposes that
		$$\rho_\v(\mu(z,t))=\db_t \eta_t$$
		for some $\eta_t\in A^{p,q-1}(X_t)$ when $0\neq t \in {\Delta_{\epsilon}}$, the Hodge decomposition of $\db_t$ and the
		commutativity of $\G_t$ with $\dbs_t$ and $\db_t$ imply that
		\begin{align*}
			\rho_\v(\mu(z,t))
			&=\db_t \eta_t= \db_t\big(\mathbb{H}_t(\eta_t) + \square_t \mathbb{G}_t \eta_t\big)\\
			&=\db_t\big(\dbs_t\db_t\G_t \eta_t\big)=\db_t\G_t\big(\dbs_t\db_t \eta_t\big)\\
			&=\db_t\G_t\big(\dbs_t\rho_\v(\mu(z,t))\big),
		\end{align*}
		where $\mathbb{H}_t$ and $\square_t$ are the harmonic projectors and
		the Laplace operators with respect to $(X_t,\omega_t)$, respectively.
		Let $t$ converge to $0$ on both sides of the equality
		\[ \rho_\v(\mu(z,t))
		=
		\db_t\G_t\big(\dbs_t\rho_\v(\mu(z,t))\big),\]
		which implies that $\mu_0$ is $\db$-exact on the central fiber
		$X_0$. Here the Green's operator $\mathbb{G}_t$ depends
		differentiably with respect to $t$.
	\end{proof}
	\begin{remark}\label{Remark 1}
	In \cite[Theorem~1.3]{RZ18}, the first author--Zhao also proved an invariance theorem for the Hodge numbers $h^{p,q}_{\bp_t}(X_t)$. In both results, the deformation invariance of $h^{p,q-1}_{\db_t}(X_t)$ is required. Our assumption that $X_0$ satisfies $\mathbb{B}^{p,q+1}$ (resp.\ $\mathcal{B}^{p+1,q}$) is stronger (resp.\ weaker) than their assumption that $X_0$ satisfies $\mathbb{S}^{p,q+1}$ (resp.\ $\mathbb{B}^{p+1,q}$).
	\end{remark}

	\subsection{Proof of Theorem \ref{inv-p0-0q-intro}}\label{p00q-section}
	\begin{proof}[Proof of Theorem \ref{inv-p0}]
		Since any holomorphic $(p,0)$-form on $X_0$ satisfying $\mc S^{p+1,0}$ is actually $d$-closed and Proposition \ref{pq-pq-1} holds automatically in the $(p,0)$-case, one can complete the proof only by Proposition \ref{thmpq}.
	\end{proof}
	\begin{proof}[Proof of Theorem \ref{inv-0q}]
		Compared with Theorem \ref{inv pq'''''''}, this one drops the condition $\mathbb{B}^{0,q+1}$ since it automatically holds.
	\end{proof}
	
	\begin{remark}\label{Remark 2}
		In \cite[Theorem 3.18]{RZ18}, the first author--Zhao considered the invariance of
		Hodge numbers $h^{p,0}_{\bp_{t}}(X_t)$, where their condition `$\mb S^{p+1,0}$ (resp. $\mb S^{p,1}$) on $X_0$' is stronger (resp. weaker) than ours.
	\end{remark}

	\section{Local stabilities of $p$-K\"{a}hler structures}
	Local stabilities of complex structures are significant  in deformation theory of complex structures.  Alessandrini--Bassanelli \cite{AB90} proved that the $(n-1)$-K\"ahlerian property is not preserved under the small deformations for balanced manifolds, nor  for more general \textit{$p$-K\"ahler manifolds} (the basics of this concept are introduced in Appendix \ref{p}), where $p > 1$, while, motivated by the proof of \cite[Theorem 9.23]{Vo02}, we plan to reprove Theorem \ref{pkahler introduction} due to \cite{RWZ19} in this section. This will be based on the extension of closed complex differential forms discussed in Subsection \ref{important}, as well as  the deformation openness of the $\p\bp$-property.

	\subsection{Several useful results and classical proofs}\label{sus-introductionpkahler}

The $\partial\bar{\partial}$-property is a powerful tool in the study of K\"ahler manifolds, with applications including the solvability of the $\bar{\partial}$-equation $\bar{\partial}x = \partial\alpha$ for $\partial\alpha$ closed, and the degeneration of the Fr\"olicher spectral sequence.  As we know, there are many interesting alternative characterizations of the $\partial \bar{\partial}$-property on compact complex manifolds, among which are P. Deligne--P. Griffiths--J. Morgan--D. Sullivan's  one  \cite{DGMS75} by the decomposition type of Dolbeault complex  and D. Angella--A. Tomassini's one \cite{AT13} by Fr\"olicher-type equality in terms of Betti, Bott--Chern and Aeppli numbers.

Recall that  a complex manifold is \textit{$p$-K\"{a}hlerian} if it admits a  \textit{$p$-K\"{a}hler form}, i.e., a $d$-closed transverse $(p,p)$-form as in Definition \ref{pkf}. One notices that each $n$-dimensional complex manifold is $n$-K\"ahlerian and there are two basic properties of $p$-K\"ahlerian structures:
	\begin{lemma}[{\cite[Proposition 1.15]{AA87} or  \cite[Corollary 4.6]{RWZ21}}]\label{pkb}
		A complex manifold $M$ is $1$-K\"ahler if and only if $M$ is
		K\"ahler;  an $n$-dimensional complex manifold $M$ is $(n-1)$-K\"ahler if and only if $M$ is  {balanced}, i.e., it admits a real positive $(1,1)$-form $\omega$, such that $w^{n-1}$ is $d$-closed.
	\end{lemma}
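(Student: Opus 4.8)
The plan is to treat the two equivalences separately; both are local in nature and come down to linear algebra on the $n$-dimensional complex vector space $W:=T^{1,0}_xM$, where an auxiliary Hermitian inner product (with its associated volume form) is fixed. The equivalence ``$1$-K\"ahler $\Leftrightarrow$ K\"ahler'' needs nothing beyond unwinding Definition \ref{pkf}: a transverse $(1,1)$-form is, at every point, a strictly positive $(1,1)$-form, i.e.\ the form $i\sum g_{j\bar k}\,dz^{j}\wedge d\bar z^{k}$ attached to a positive-definite Hermitian matrix $(g_{j\bar k})$, and conversely every Hermitian metric form is of this type; hence a $d$-closed transverse $(1,1)$-form is exactly a K\"ahler form.

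For ``$(n-1)$-K\"ahler $\Leftrightarrow$ balanced'' the key input is Michelsohn's pointwise lemma: the assignment $\omega\mapsto\omega^{n-1}$ is a real-analytic bijection from the open cone of strictly positive $(1,1)$-forms on $W$ onto the open cone of transverse $(n-1,n-1)$-forms on $W$. Granting this, the global statement is immediate. If $M$ is balanced with balanced metric $\omega$, then $\Omega:=\omega^{n-1}$ is a transverse $(n-1,n-1)$-form with $d\Omega=d(\omega^{n-1})=0$, so $M$ is $(n-1)$-K\"ahler. Conversely, given a $d$-closed transverse form $\Omega\in A^{n-1,n-1}(M)$, I would invert pointwise to obtain the unique strictly positive $(1,1)$-form $\omega$ with $\omega^{n-1}=\Omega$; since the inverse of $\omega\mapsto\omega^{n-1}$ is real-analytic in the coefficients and $\Omega$ is smooth, $\omega$ is a smooth Hermitian metric form, and $d(\omega^{n-1})=d\Omega=0$ shows $\omega$ is balanced.

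To prove the algebraic lemma I would diagonalize against the background metric. A strictly positive $(1,1)$-form may be written $\omega=i\sum_{j=1}^{n}\lambda_{j}\,\zeta^{j}\wedge\bar\zeta^{j}$ in a unitary coframe $\zeta^{1},\dots,\zeta^{n}$ with $\lambda_{j}>0$, so that $\omega^{n-1}=(n-1)!\sum_{j}\bigl(\prod_{k\neq j}\lambda_{k}\bigr)\,\Xi^{j}$ where $\Xi^{j}:=\bigwedge_{k\neq j}i\,\zeta^{k}\wedge\bar\zeta^{k}$. Dually, a $(n-1,n-1)$-form is transverse precisely when its pairing against every nonzero weakly positive $(1,1)$-form is positive, and a spectral argument in the same coframe writes any such form as $\Omega=(n-1)!\sum_{j}\mu_{j}\,\Xi^{j}$ with all $\mu_{j}>0$. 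The equation $\omega^{n-1}=\Omega$ then becomes the system $\prod_{k\neq j}\lambda_{k}=\mu_{j}$ for $1\le j\le n$; multiplying the $n$ equations gives $\bigl(\prod_{k}\lambda_{k}\bigr)^{n-1}=\prod_{j}\mu_{j}$, hence $\prod_{k}\lambda_{k}=P:=\bigl(\prod_{j}\mu_{j}\bigr)^{1/(n-1)}$ and $\lambda_{j}=P/\mu_{j}>0$, the unique solution, visibly real-analytic in the $\mu_{j}$. This gives bijectivity together with analyticity of the inverse.

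The main obstacle is precisely this pointwise lemma — specifically, the diagonalizability of a transverse $(n-1,n-1)$-form (equivalently, identifying such forms with the Hodge-$\ast$ images of positive $(1,1)$-forms) and then checking that the resulting pointwise $\omega$ glues into a globally defined smooth form, which needs both its uniqueness (to preclude monodromy) and its analytic dependence on $\Omega$; once Michelsohn's lemma is available in this sharp form, both are automatic. For $p=1$ there is essentially nothing to do beyond the definitions. Of course, one may also simply invoke \cite[Proposition 1.15]{AA87} or \cite[Corollary 4.6]{RWZ21}.
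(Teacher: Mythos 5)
Your proposal is correct, but note that the paper itself offers no argument for this lemma: it is stated with the citations \cite{AA87} and \cite{RWZ21} and used as a black box. What you supply is essentially the classical proof underlying those references. The $p=1$ case is, as you say, pure definition-unwinding, since in bidegree $(1,1)$ all the positivity notions of the paper coincide and a transverse $(1,1)$-form is exactly the fundamental form of a Hermitian metric. For $p=n-1$ your reduction to the pointwise statement that $\omega\mapsto\omega^{n-1}$ is a bijection between the cone of strictly positive $(1,1)$-forms and the cone of transverse $(n-1,n-1)$-forms, together with uniqueness (no gluing issue) and real-analytic dependence (smoothness of the resulting metric), is exactly Michelsohn's linear-algebra lemma \cite{Mich}, which is also the engine behind \cite[Proposition 1.15]{AA87} and \cite[Corollary 4.6]{RWZ21}. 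The one step you state rather than prove is the simultaneous diagonalization of a transverse $(n-1,n-1)$-form in a single unitary coframe, i.e.\ $\Omega=(n-1)!\sum_j\mu_j\,\Xi^j$ with $\mu_j>0$; this is legitimate, and follows from the contraction isomorphism $\bigwedge^{n-1,0}V^*\cong V\otimes\bigwedge^{n,0}V^*$, under which $\Omega$ becomes a positive definite Hermitian form on $V$ whose unitary eigenbasis produces the desired coframe (equivalently, every element of $\bigwedge^{n-1}$ of an $n$-dimensional space is decomposable). With that spelled out, your eigenvalue computation $\lambda_j=P/\mu_j$, $P=(\prod_j\mu_j)^{1/(n-1)}$, gives existence, uniqueness and analyticity at once, so your route is a complete, self-contained substitute for the citation.
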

	
	Thus, as a direct corollary of Theorem \ref{pkahler introduction}, one has:
	\begin{corollary}\label{cor-mt}
		Let $\pi: \mathcal{X} \rightarrow B$ be a differentiable family of compact complex manifolds.
		\begin{enumerate}[$(i)$]
			\item {{ {\textbf{$($\cite[Theorem 15]{KS60}$)$}}}\label{stab-Kahler}
				If the fiber $X_0:= \pi^{-1}(t_0)$ admits a K\"ahler metric, then for a sufficiently small neighborhood $U$ of $t_0$ on $B$, the fiber $X_t:=\pi^{-1}(t)$ over any point $t\in U$ still admits a K\"ahler metric, which coincides for $t=t_0$ with the given K\"ahler metric on $X_0$.}
			\item{$($\cite[Theorem 5.13]{Wu06}$)$} \label{balanced-Kahler}
			Let the fiber $X_0$ be a balanced manifold and satisfy the  $\p\db$-property. Then $X_t$ also admits a balanced metric for small $t$.
		\end{enumerate}
	\end{corollary}
	
	The first assertion of Corollary \ref{cor-mt} is the fundamental Kodaira--Spencer's local stability theorem of K\"ahler structures, and motivates the second one of Corollary \ref{cor-mt} and many other related works on local stabilities of
	complex structures in \cite{FY11,Vo02,Wu06,AU16,AU17,RWZ19}.

	Let us sketch Kodaira--Spencer's proof of local stability theorem
	\cite{KS60}. Let $\mathbb H_{\mathrm {BC},t}$ be the orthogonal projection to the kernel
	$\mathds{F}_{\mathrm {BC},t}$ of the Bott--Chern Laplacian
	\begin{equation*}
		\square_{{\mathrm {BC}},t}=\p_t\db_t\db_t^*\p_t^*+\db_t^*\p_t^*\p_t\db_t+\db_t^*\p_t\p_t^*\db_t+\p_t^*\db_t\db_t^*\p_t+\db_t^*\db_t+\p_t^*\p_t
	\end{equation*}
	and $\G_{\mathrm {BC},t}$ the corresponding Green's operator with respect to $\alpha_t$ on $X_t$.
	Here
	$$\alpha_t=\sqrt{-1}g_{i\overline{ j}}(\zeta,t)d \zeta^i \w d \overline{\zeta}^j$$
	is a Hermitian metric on $X_t$ depending differentiably on $t$ with $\alpha_0$ being a K\"ahler metric on $X_0$,
	and $\db_t^*$ (resp.  $\p_t^*$) is the dual of $\db_t$ (resp. $\p_t$) with respect to $\alpha_t$. Using a cohomological argument together with  the upper semi-continuity theorem, they prove that
	$\mathbb H_{\mathrm {BC},t}$ and $\G_{\mathrm {BC},t}$ depend differentiably on $t$. Then they can
	construct the desired K\"ahler metric on $X_t$ as
	$$\widetilde{\alpha_t}=\frac{1}{2}(\mb H_{\mathrm {BC},t}\alpha_t+\overline{\mb H_{\mathrm {BC},t}\alpha_t}).$$

	We next roughly state  Voisin's proof of local stability theorem \cite[Theorem 9.23]{Vo02}. Let $\pi: \mathcal{X} \rightarrow B$ ($0 \in B$) be a family of complex manifolds with K\"ahlerian $X=X_0$. One  puts a Hermitian metric on $\mathcal{X}$ to get  an induced Hermitian metric on each $X_{b}:=\pi^{-1}(b)$. We can identify $H^{1}\left(X_{b}, \Omega_{X_{b}}\right)$  with the forms of type $(0,1)$ with values in $\Omega_{X_{b}}$ which are harmonic for the Laplacian associated to the operator $\bp$.  By virtue of the deformation invariance of Hodge numbers \cite[Proposition 9.20]{Vo02} together with the deformation theory from \cite{MK71}, she proved the existence of a $\mathcal{C}^{\infty}$ section $\left(\omega_{b}\right)_{b \in B}, \omega_{0}=\omega$ ($d$-closed K\"ahler form) of the bundle $\Omega_{\mathcal{X} / B} \otimes \Omega_{\mathcal{X} / B}^{0,1}$ such that $\omega_{b}$ is $\bar{\partial}_b$-closed for every $b$ sufficiently near 0, where
	$$ \Omega_{\mathcal{X} / B}:=\Omega_{\mathcal{X}} / \pi^{*} \Omega_{B},$$
	and
	$$\Omega_{\mc X/B} ^{0,q}:=\bigwedge ^q\Omega_{\mathcal{X} / B}^{0,1},\quad \Omega_{\mathcal{X} / B}^{0,1}:=\Omega_\mc X^{0,1}/\pi^*\Omega_B^{0,1}.$$
	Finally, she got the desired K\"ahler form $\Re \omega_{b}^{\prime}$ from a $d$-closed $(1,1)$-form $\omega_{b}^{\prime}$ constructed on $X_b$ due to her (separate) usage of deformation openness of $\p\bp$-property \cite[Propositions 9.20, 9.21]{Vo02} and some uniform convergence arguments. 
    
    One should notice that the former  property was first showed by Voisin, while one realizes that the $\p\bp$-property on $X$ is equivalent to the Fr\"{o}licher (or Hodge to de Rham) spectral sequence degenerates at $E_1$ plus the property that the canonical filtration on $H^l (X,\mb C)$ gives a Hodge structure of weight $l$, for every $l\geq 0$. See \cite[Theorem 5.12]{Wu06} and \cite[Corollary 3.7]{AT13} for alternative proofs.

	It's worth noting that when using Voisin's approach to deal with the local stabilities of $p$-K\"{a}hler structures ($p>1$), one might encounter some obstacles.  Unlike the case of $1$-K\"ahler forms, a $p$-K\"ahler form $\omega_{0}$ on $X_0$ is not necessarily $\bp$-harmonic.  Hence,  given an initial $d$-closed  $\omega_{0}\in A^{p,p}(X_0)$, one first needs to find a $\bp$-harmonic representative which is much possibly different form $\omega_0$, and then imitates the subsequent actions. However, it appears that the transversality is likely to be lost in this process.
	
	Motivated by Voisin's proof,  we now briefly describe our method to prove local stabilities of $p$-K\"{a}hler structures, which is quite different from Kodaira--Spencer's. By means of the Kuranishi's completeness theorem  introduced in Subsection \ref{subsection-Kuranishi family}, we can reduce Theorem \ref{pkahler introduction}  to the  Kuranishi family  $\varpi:\mc{K}\to T$ by shrinking $E$ if necessary, that is, we will explicitly construct a  $p$-K\"{a}hler extension
	$\widetilde{\omega_t}$ of the $p$-K\"{a}hler form $\omega_0$ on $X_0$, such that $\widetilde{\omega_t}$ is a
	$p$-K\"{a}hler form on the general fiber $\varpi^{-1}(t)=X_t$. %The details of the extension  will be given in Section \ref{proof pkahler}.
	
	In this method, the following observation plays a prominent role.
	\begin{proposition}[{\cite[Proposition 4.12]{RWZ21}}]\label{ext-trans}
		Let $\pi: \mathcal{X} \> B$ be a  differentiable family of compact complex $n$-dimensional manifolds and
		$\Omega_t$  a family of real $(p,p)$-forms with $p< n$, depending smoothly on $t$. Assume that
		$\Omega_0$ is a transverse $(p,p)$-form on $X_0$. Then $\Omega_t$ is also transverse on $X_t$ for small $t$.
	\end{proposition}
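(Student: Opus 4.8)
The plan is to treat transversality as an open condition in the $C^{0}$-topology that is, moreover, stable under small perturbations of the complex structure, and then to supply the needed uniformity by a compactness argument over the fixed, compact underlying manifold. After shrinking $B$, use the family structure (Definition \ref{Differential family}) to identify every fiber $X_{t}$ with the compact differentiable manifold $M$ underlying $X_{0}$, so that $X_{t}=(M,J_{t})$ with $J_{t}$ depending smoothly on $t$ and $J_{0}$ the given structure; fix once and for all a background Hermitian metric on $M$ with volume form $dV$. Recall the pointwise content behind Definition \ref{pkf}: a real $(p,p)$-form $\Omega$ on $(M,J)$ is transverse iff for every $x\in M$ and every nonzero $J$-decomposable $(n-p,0)$-covector $\eta=\eta_{1}\wedge\cdots\wedge\eta_{n-p}$ at $x$ one has $\Omega_{x}\wedge\sqrt{-1}^{\,(n-p)^{2}}\eta\wedge\bar\eta>0$. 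The key point is that this is a strict inequality ranging over a compact family of test data.

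First I would record uniform positivity on the central fiber. Let $\mathcal{D}_{0}\to M$ be the bundle whose fiber over $x$ consists of the $J_{0}$-decomposable $(n-p,0)$-covectors of unit norm; since $M$ and these fibers are compact, $\mathcal{D}_{0}$ is compact. The function $(x,\eta)\mapsto\big(\Omega_{0}\wedge\sqrt{-1}^{\,(n-p)^{2}}\eta\wedge\bar\eta\big)/dV$ is continuous on $\mathcal{D}_{0}$ and, by hypothesis, strictly positive, hence bounded below by some $\varepsilon_{0}>0$.

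Next I would transport the test data to nearby parameters. Because $J_{t}\to J_{0}$ smoothly, the $J_{t}$-decomposable $(n-p,0)$-covectors form a small perturbation of $\mathcal{D}_{0}$: for $|t|\leq\delta_{0}$ there is a continuous family of fiber bundle maps $\Psi_{t}\colon\mathcal{D}_{0}\to\mathcal{D}_{t}$ (the $J_{t}$-analogue of $\mathcal{D}_{0}$), with $\Psi_{0}=\mathrm{id}$, obtained by sending a $J_{0}$-decomposable covector $\eta_{1}\wedge\cdots\wedge\eta_{n-p}$ to the renormalization of the wedge of the $(1,0)_{J_{t}}$-parts of its factors — which is again decomposable with respect to $J_{t}$, is nonzero, and (after renormalization) sweeps out all of $\mathcal{D}_{t}$ once $|t|$ is small enough. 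Then
\[
 F(x,\eta,t):=\frac{\Omega_{t}\wedge\sqrt{-1}^{\,(n-p)^{2}}\,\Psi_{t}(\eta)\wedge\overline{\Psi_{t}(\eta)}}{dV}
\]
is continuous on $\mathcal{D}_{0}\times\{\,|t|\leq\delta_{0}\,\}$, since $\Omega_{t}$ depends smoothly on $t$ by hypothesis, $\Psi_{t}$ continuously on $t$, and $dV$ is fixed. As $F(\cdot,\cdot,0)\geq\varepsilon_{0}$ on the compact set $\mathcal{D}_{0}$, continuity gives $\delta\in(0,\delta_{0}]$ with $F(x,\eta,t)\geq\varepsilon_{0}/2$ for all $(x,\eta)\in\mathcal{D}_{0}$ and $|t|\leq\delta$; since $\Psi_{t}$ exhausts the unit $J_{t}$-decomposable $(n-p,0)$-covectors, this is exactly transversality of $\Omega_{t}$ on $X_{t}$ for $|t|\leq\delta$, uniformly in $x\in M$.

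The main obstacle — everything else being the routine observation that a strict inequality on a compact set survives a small $C^{0}$-perturbation — is the middle step: checking, with a single $\delta>0$ valid over all of $M$, that the cone of transverse $(p,p)$-forms, equivalently its dual test set of decomposable $(n-p,0)$-covectors, deforms continuously with $J_{t}$, and in particular that $\Psi_{t}$ is an honest identification of compact cross-sections rather than merely a map. This is also where $p<n$ is used, so that transversality is a nonvacuous positivity condition. One could alternatively run the same argument on the relative Grassmannian bundle of $J_{t}$-complex $p$-planes $V\subset T_{x}X_{t}$, testing $\Omega_{t}|_{V}>0$, with identical bookkeeping.
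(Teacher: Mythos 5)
Your proposal is correct and follows essentially the same route as the paper's argument (given here for the foliated analogue in Step (II) of Subsection \ref{Foliation proofproof}, following \cite[Propositions 4.11--4.12]{RWZ21}): strict positivity of the pairing against the compact Pl\"ucker locus of decomposable $(n-p,0)$-covectors on the central fiber, transport of these test covectors to nearby fibers by taking $(1,0)_{J_t}$-parts (your $\Psi_t$ is exactly the role played by $e^{\iota_\varphi}$ on decomposable forms), and then continuity in $t$ plus compactness of $M$ to extract a uniform $\delta$. The only difference is organizational: you run the compactness argument once on the global bundle $\mathcal{D}_0$, whereas the paper bounds the function $f_x(t,[\tau])$ fiberwise and finishes with a finite subcover of $M$.
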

	
	This proposition demonstrates that any smooth real extension of a transverse $(p,p)$-form remains transverse. Therefore, the only obstruction to extending a $d$-closed transverse $(p,p)$-form on a compact complex manifold lies in its $d$-closed property. We will provide the proof of Theorem \ref{pkahler introduction}  in the next subsection.

	\subsection{Proof of Theorem \ref{pkahler introduction}}\label{proof pkahler}
	%\begin{proof}
	Consider a Kuranishi family of deformations
	$\pi:\mathcal{X}\rightarrow \Delta_{\epsilon}$  (with small $\epsilon$) of $n$-dimensional complex manifolds over a small complex disk
	with the general fibers $X_t:=\pi^{-1}(t)$, and	fix a smooth family of  Hermitian metrics
	$\{h_t\}_{t \in \Delta_{\epsilon}}.$ 
    
Now given the $p$-K\"{a}hler form $\omega_0$ on $X_0$, namely a $d$-closed transverse $(p,p)$-form, based on Theorem   \ref{main thm}, one can find a unique solution $\omega(t)\in A^{p,p}(X_0)$ satisfying the equation (\ref{integral equation}) which is smooth in $(z,t)$ and holomorphic in small $t$. Thus, we apply (\ref{decomposition}) to our situation to see
	$$
	\begin{aligned}
		e^{\iota_{\varphi}}(\omega(t)) &=\omega_t^{p, p}+\omega_t^{p+1, p-1}+\cdots+\omega_t^{2p, 0} \\
		& \in A^{p, p}\left(X_{t}\right) \oplus A^{p+1, p-1}\left(X_{t}\right) \oplus \cdots \oplus A^{2p, 0}\left(X_{t}\right)
	\end{aligned}
	$$
	is $d$-closed in $F^{p}A^{2p}(X_t).$ Then one has the observations due to  type reason:
	\begin{equation}\label{ddbar openness}
		\begin{cases}
			\bp_{t}{\omega_t^{p, p}}=0,\\
			\p_t{\omega_t^{p, p}}+	\bp_{t}{\omega_t^{p+1, p-1}}=0.
		\end{cases}
	\end{equation}
	Since $X_0$ satisfies the $\p\db$-lemma, so does the general fiber
	$X_t$. Thus, there exists $\beta_t\in A^{p,p-1}(X_t)$ satisfying  $$\p_t{\omega_t^{p, p}}=\p_t\bp_{t}\beta_t$$ by virtue of the second equality of (\ref{ddbar openness}). Set
	$\widehat{\omega_t}:=\omega_t^{p, p}-\bp_{t}\beta_t$. Then,  $$\widetilde{\omega_t}:=\frac{1}{2}\left(\widehat{\omega_t}+\overline{\widehat{\omega_t}}\right)$$  is real and $d$-closed. Thanks to Proposition \ref{ext-trans}, to prove that $\widetilde{\omega}_t$ is the $p$-K\"{a}hler extension of the $p$-K\"{a}hler form $\omega_0$ on $X_0$, it suffices to show that
	$\bp_{t}\beta_t$ converges uniformly to zero as $t$ tends to $0$.
	
	Following the notations introduced in Section \ref{sus-introductionpkahler}, one knows that $\mb H_{\mathrm {BC},t}$ and
	$\G_{\mathrm {BC},t}$ are $C^{\infty}$ differentiable in $t$ since  $\dim
	\mathds{F}_{\mathrm {BC},t}$ is deformation invariant, because of \cite[Theorem 7]{KS60} and \cite[Theorem
	5.12]{Wu06}.
	Choose the explicit solution
	\begin{equation*}
		\beta_t=(\p_t\bp_{t})^*\G_{\mathrm {BC},t}\p_t\omega_t^{p,p},
	\end{equation*}
	as shown in Lemma \ref{ddbar-eq}. One thus obtains that $\bp_{t}\beta_t$ converges uniformly to zero with $t$, since
	\begin{equation*}
		{\beta_t}|_{t=0}=(\p_0\bp_0)^*\G_{\mathrm {BC},0}\p_0\omega_0=0.
	\end{equation*}
	
	Theorem \ref{pkahler introduction}  is, thus, proved.
	%\end{proof}
	
	\begin{remark}\label{weaken remark}
		It is natural to ask if we can weaken the standard $\p\bp$-property  that appeared in Theorem \ref{pkahler introduction}  in our approach as in \cite[Theorem 1.1]{RWZ19}.
		
		\begin{enumerate}[(a)]
			\item By \cite[Definition 3.1]{RWZ19},  we say that a complex manifold $X$ satisfies the  \textit{$(p,q)$-th mild $\p\b{\p}$-property} if for any complex differential $(p-1,q)$-form $\xi$ with $\p\b{\p}\xi=0$ on $X$, there exists a $(p-1,q-1)$-form $\theta$ such that $\p\b{\p}\theta=\p\xi$. Then obviously,  this condition is equivalent to the condition $\mathbb B^{p,q}$ as introduced in \cite[Notation 3.5]{RZ18}.
			
			In our proof of Theorem \ref{pkahler introduction} , the existence
			of $\beta_t$ relies on the deformation openness of $\p\bar{\partial}$-property. However, the mild one doesn't possess this property (see \cite[Example 3.7]{UV15}).
			\item 	There is another weak version of the $\p\bp$-property, namely  the \textit {$(p,q)$-th strong $\p\db$-property} on $X$, first proposed by
			Angella--Ugarte \cite{AU17}  in the case $(p,q)=(n-1,n)$, stated that the induced mapping
			$\iota^{p,q}_{\mathrm {BC},\mathrm A}: H^{p,q}_{\mathrm {BC}}(X) \rightarrow
			H^{p,q}_{\mathrm A}(X)$ by the identity map is injective (see Diagram \eqref{diag}), which
			is equivalent to that for any $d$-closed $(p,q)$-form $\Gamma$ of
			the type $\Gamma=\p\xi+\db \psi$, there exists a $(p-1,q-1)$-form
			$\theta$ such that
		$\p \db \theta = \Gamma.$
			It is a well-known fact that the $(p,q)$-th strong $\p\db$-property can imply the $(p,q)$-th mild $\p\b{\p}$-property.
			
			The most noteworthy is that
			Angella--Ugarte \cite[Proposition 4.8]{AU17} showed the deformation
			openness of the $(n-1,n)$-th strong $\p\db$-property. So, it's spontaneous to ask if one can weaken our second part of Corollary \ref{cor-mt} to $(n-1,n)$-th strong $\p\db$-property version by using our method.
			One can see that in the proof of Theorem \ref{pkahler introduction} , it is crucial to ensure the $C^{\infty}$  differentiability of $\G_{\mathrm {BC},t}$  in $t$ when $t$ converges  to 0, which is benefited from the  deformation invariance of Bott--Chern number.  However, \cite[Example 4.10]{AU17} shows that a small
			deformation of a completely-solvable Nakamura threefold, which is
			balanced and satisfies the $(2,3)$-th strong $\p\db$-property, is also
			balanced. But the $(2,2)$-th Bott--Chern number varies along this
			deformation.
		\end{enumerate}
		
		Hence, among all the (weak) versions of the $\p\bp$-properties as far as we know, it seems that the standard $\p\bp$-property on $X_0$ is the most general condition to prove the local stabilities of $p$-K\"{a}hler structures in this approach.
	\end{remark}
	
	\subsection{Upper semi-continuity of Bott--Chern numbers}\label{further}
	In several complex variables, one has the remarkable Grauert's semi-continuity theorem:
	\begin{theorem}[{\cite[Theorem  4.12.(\romannumeral 1) of Chapter \uppercase\expandafter{\romannumeral3}]{BS76} or \cite{Gr60}}] \label{uppersemi}
		Let $f: X \rightarrow Y$ be a proper morphism of complex spaces and $\mathcal{S}$ a coherent analytic sheaf on $X$ which is flat with respect to $Y$ (or $f$), which means that the $\mathcal{O}_{f(x)}$-modules $\mathcal{S}_{x}$ are flat for all $x \in X .$ Set $\mathcal{S}(y)$ as the analytic inverse image with respect to the embedding $X_{y}$ in $X$. Then for any integers $i, d \geq 0$, the set
		$$
		\left\{y \in Y \,\, |\,\, \dim_{\mathbb{C}} H^{i}\left(X_{y}, \mathcal{S}(y)\right) \geq d\right\}
		$$
		is an analytic subset of $Y$.
	\end{theorem}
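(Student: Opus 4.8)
The plan is to reduce the theorem to a statement about ranks of matrices of holomorphic functions, by replacing $\mathbf{R}f_{*}\mathcal{S}$, locally over $Y$, with a bounded complex of finite free $\mathcal{O}_{Y}$-modules whose formation is compatible with passing to the fibres. Since "analytic subset of $Y$" is a local notion on $Y$ and the set in question is intrinsic, I may shrink $Y$ to a small Stein open set. Because $f$ is proper, I can cover $X$ by finitely many Stein open sets that are privileged over $Y$ in the sense of Cartan--Serre, and then compute $R^{\bullet}f_{*}\mathcal{S}$ by the associated complex $C^{\bullet}$ of alternating cochains, a complex of $\mathcal{O}_{Y}$-modules (a priori with infinite-dimensional terms).

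The main obstacle is the finiteness step. After shrinking $Y$ further one must show that $C^{\bullet}$ is quasi-isomorphic --- compatibly with $-\otimes_{\mathcal{O}_{Y}}\mathcal{M}$ for every coherent $\mathcal{O}_{Y}$-module $\mathcal{M}$ --- to a bounded complex
\[
L^{\bullet}\colon\quad 0\longrightarrow L^{0}\xrightarrow{\ d^{0}\ }L^{1}\xrightarrow{\ d^{1}\ }\cdots\xrightarrow{\ d^{m-1}\ }L^{m}\longrightarrow 0
\]
of finite \emph{free} $\mathcal{O}_{Y}$-modules. This is the analytic incarnation of "cohomology and base change": it rests on Grauert's direct image theorem together with the privileged-neighbourhood machinery (or the Forster--Knorr and Kiehl--Verdier refinements), and the flatness of $\mathcal{S}$ over $Y$ enters exactly here, to guarantee that $L^{\bullet}\otimes_{\mathcal{O}_{Y}}\mathcal{M}$ computes $R^{\bullet}f_{*}(\mathcal{S}\otimes f^{*}\mathcal{M})$ for all coherent $\mathcal{M}$. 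Specializing $\mathcal{M}=k(y):=\mathcal{O}_{Y}/\mathfrak{m}_{y}$ and invoking proper base change yields canonical isomorphisms
\[
H^{i}\!\left(X_{y},\mathcal{S}(y)\right)\;\cong\;H^{i}\!\left(L^{\bullet}(y)\right)\;=\;\frac{\ker\!\left(d^{i}(y)\right)}{\operatorname{im}\!\left(d^{i-1}(y)\right)},
\]
where $d^{i}(y)\colon L^{i}(y)\to L^{i+1}(y)$ is the linear map on fibres, represented by a matrix of holomorphic functions on $Y$ once bases of the $L^{i}$ are fixed.

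Granting this, the remainder is routine semicontinuity. Writing $r_{i}:=\operatorname{rank}_{\mathcal{O}_{Y}}L^{i}$ and using the convention $d^{-1}=0$, rank-nullity gives
\[
\dim_{\mathbb{C}}H^{i}\!\left(X_{y},\mathcal{S}(y)\right)\;=\;r_{i}-\operatorname{rk}d^{i}(y)-\operatorname{rk}d^{i-1}(y),
\]
so $\dim_{\mathbb{C}}H^{i}(X_{y},\mathcal{S}(y))\geq d$ precisely when $\operatorname{rk}d^{i}(y)+\operatorname{rk}d^{i-1}(y)\leq r_{i}-d$. For a matrix $A(y)$ of holomorphic functions, the locus $\{y:\operatorname{rk}A(y)\leq a\}$ is the common zero set of all $(a+1)\times(a+1)$ minors of $A$, hence an analytic subset of $Y$; therefore
\[
\left\{y\in Y\;:\;\dim_{\mathbb{C}}H^{i}(X_{y},\mathcal{S}(y))\geq d\right\}=\bigcup_{\substack{a,b\geq 0\\ a+b\leq r_{i}-d}}\Big(\{\operatorname{rk}d^{i}(y)\leq a\}\cap\{\operatorname{rk}d^{i-1}(y)\leq b\}\Big)
\]
is a finite union of intersections of analytic subsets of $Y$, hence analytic (it is empty, and so trivially analytic, when $r_{i}<d$). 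Since this description holds on each member of an open cover of $Y$ and the left-hand set is intrinsic, independence of the choices made yields that it is analytic on all of $Y$, which proves the theorem.
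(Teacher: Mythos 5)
Your proposal is correct, and it is essentially the same argument as in the sources the paper relies on: the paper states this theorem purely by citation to \cite{BS76} (Ch.~III) and \cite{Gr60}, and the proof given there is exactly your reduction — locally over a Stein base, replace $\mathbf{R}f_{*}\mathcal{S}$ by a bounded complex $L^{\bullet}$ of finite free $\mathcal{O}_{Y}$-modules whose formation commutes with $-\otimes_{\mathcal{O}_{Y}}\mathcal{M}$ (this is where properness, Grauert's direct image theorem and the flatness of $\mathcal{S}$ enter), then express $\dim_{\mathbb{C}}H^{i}(X_{y},\mathcal{S}(y))\geq d$ through ranks of the matrices $d^{i}(y)$, $d^{i-1}(y)$ and describe the rank loci by vanishing of minors. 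The one step you treat as a black box (the universal finite free complex, via Forster--Knorr/Kiehl--Verdier) is precisely the deep input of the cited proof, so deferring it is legitimate; the remaining semicontinuity and gluing argument you give is complete and correct.
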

	The topology on $Y$ defined by closed sets as analytic sets is referred to as the \textit{analytic Zariski topology}. The statement of Theorem \ref{uppersemi} implies the upper semi-continuity of $h^{i}\left(X_{y}, \mathcal{S}(y)\right)$ concerning this analytic Zariski topology.
	
	The following result, a straightforward application of Theorem \ref{uppersemi}, plays a crucial role in deformation theory.
	
	\begin{theorem}[{\cite[$\S 10.5.4$]{GR84}}]\label{Upper semi-continuity}
		Let $f:X\rightarrow Y$ be a holomorphic family of compact complex manifolds with connected complex manifolds $X,Y$ and $V$ a holomorphic vector bundle on $X$. Then for any integers $i,d\geq 0$, the set $$\{y\in Y \,\, |\,\,   \dim_{\mathbb{C}} H^i(X_y,V|_{X_y})\geq d\}$$ is an analytic subset of $Y$.
	\end{theorem}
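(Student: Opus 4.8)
The statement to prove is Theorem~\ref{Upper semi-continuity}, which derives the analytic-Zariski upper semi-continuity of $y\mapsto \dim_{\mathbb C}H^i(X_y,V|_{X_y})$ for a holomorphic family $f:X\to Y$ of compact complex manifolds together with a holomorphic vector bundle $V$ on $X$, from Grauert's general coherent-sheaf semi-continuity theorem (Theorem~\ref{uppersemi}).

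\medskip

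\noindent\textbf{Plan of proof.}
The plan is to exhibit the statement as a special case of Theorem~\ref{uppersemi} applied to a suitably chosen coherent analytic sheaf on $X$. First I would take $\mathcal S:=\mathcal O_X(V)$, the locally free $\mathcal O_X$-module of sections of the holomorphic vector bundle $V$; being locally free, it is in particular coherent, and it is flat over $Y$ (equivalently, over $f$) because a locally free sheaf is flat over the base of any morphism --- concretely, for each $x\in X$ the stalk $\mathcal S_x$ is a free $\mathcal O_{X,x}$-module, hence free and a fortiori flat as a module over $\mathcal O_{Y,f(x)}$ via the induced ring map $f^\#\colon \mathcal O_{Y,f(x)}\to \mathcal O_{X,x}$. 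The morphism $f$ is proper by the definition of a holomorphic family of compact complex manifolds (proper surjective holomorphic submersion), so all hypotheses of Theorem~\ref{uppersemi} are met.

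\medskip

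\noindent\textbf{Key steps.}
The second step is to identify, for each $y\in Y$, the analytic inverse image $\mathcal S(y)$ appearing in Theorem~\ref{uppersemi} with $V|_{X_y}$ as a sheaf on the fiber $X_y=f^{-1}(y)$. By definition $\mathcal S(y)=\iota_y^{*}\mathcal S=\mathcal S\otimes_{\mathcal O_X}\mathcal O_{X_y}$ where $\iota_y\colon X_y\hookrightarrow X$ is the closed embedding; since $\mathcal S=\mathcal O_X(V)$ is locally free, its restriction to the submanifold $X_y$ is exactly the sheaf of sections of the restricted vector bundle $V|_{X_y}$, with no higher $\mathrm{Tor}$ contributions to worry about. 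Consequently $H^i\bigl(X_y,\mathcal S(y)\bigr)\cong H^i\bigl(X_y, V|_{X_y}\bigr)$ for every $i\ge 0$. The third step is then purely bookkeeping: Theorem~\ref{uppersemi} asserts that for all integers $i,d\ge 0$ the set $\{y\in Y\mid \dim_{\mathbb C}H^i(X_y,\mathcal S(y))\ge d\}$ is an analytic subset of $Y$, and by the identification just made this set equals $\{y\in Y\mid \dim_{\mathbb C}H^i(X_y,V|_{X_y})\ge d\}$, which is precisely the assertion of Theorem~\ref{Upper semi-continuity}.

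\medskip

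\noindent\textbf{Main obstacle.}
There is no serious analytic obstacle here; the one point that needs genuine care is the verification that $\mathcal O_X(V)$ is $Y$-flat and that its fiberwise inverse image is honestly $V|_{X_y}$ rather than some derived object --- i.e.\ that forming $\mathcal S(y)$ and forming $H^i$ commute with the reductions one wants. Both are immediate for a locally free sheaf, so the ``hard part'' is really just citing Theorem~\ref{uppersemi} with the correct sheaf and matching up notation; the content of the result lies entirely in Grauert's theorem, which we are permitted to assume.
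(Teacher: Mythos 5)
Your proposal is correct and follows exactly the route the paper intends: the paper presents this statement as a straightforward application of Grauert's semi-continuity theorem (Theorem \ref{uppersemi}) to the locally free, hence coherent, sheaf of sections $\mathcal O_X(V)$, with $\mathcal S(y)\cong V|_{X_y}$ since the sheaf is locally free. One small correction: flatness over $Y$ does not hold for locally free sheaves under \emph{any} morphism (your parenthetical justification is off, since freeness over $\mathcal O_{X,x}$ does not give freeness over $\mathcal O_{Y,f(x)}$); here it follows because $f$ is a holomorphic submersion, so $\mathcal O_{X,x}$ is flat over $\mathcal O_{Y,f(x)}$, and a free $\mathcal O_{X,x}$-module is then flat over $\mathcal O_{Y,f(x)}$ by transitivity.
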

	
	It is noteworthy that Grauert's semi-continuity theorem is considerably stronger than the corresponding result of Kodaira--Spencer when dealing with a holomorphic family, since the ordinary topology is obviously much finer.  For example, consider a Kuranishi family of deformations $\pi:\mathcal{X}\rightarrow \Delta_{\epsilon}$ of $n$-dimensional compact complex manifolds with small $\epsilon$,  Kodaira--Spencer's result tells us that the
	function $ \Delta_{\varepsilon}\ni t\mapsto h^{p,q}_{\db_t}(X_t)$
	is always upper semi-continuous (with respect to the original topology), but Grauert's theorem gives the additional information that  this function  is actually  constant outside  a complex analytic set of $\Delta_\epsilon$. In other words, in this Kuranishi family if jumping of Hodge numbers occurs it must do so only on a complex analytic set. This additional information could not be got just from the original Kodaira--Spencer's theorem. Here we apply Theorem \ref{Upper semi-continuity} to the holomorphic vector bundle of relative differential forms $ \Omega_{\mathcal X/\Delta_\epsilon}^p:=\bigwedge^{p} \Omega_{\mathcal{X} /\Delta_\epsilon },$  noting that $\Omega_{\mathcal{X} / \Delta_\epsilon}^{p}|_{ X_{t}} \cong \Omega_{X_{t}}^{p}$. One can also refer to \cite[Proposition 5.8]{Xi21b} for a new proof concerning some cases of Theorem \ref{Upper semi-continuity}.
	
	Now, we turn to study the Bott--Chern numbers under small deformation. With the  setting as stated in the last paragraph, the function is always upper semi-continuous with respect to the classical topology for $t\in \Delta_{\varepsilon},$ thanks to \cite[Theorem $4$]{KS60}. So it's natural to ask if the function  $t\mapsto h^{p,q}_{\mathrm {BC}}(X_t)$ is also upper semi-continuous with respect to the analytic Zariski topology. One possible approach is to use Grauert's upper semi-continuity Theorem \ref{Upper semi-continuity}.  In other words, one needs to find some holomorphic vector bundle $V$ on $\mathcal X$, such that  $H^{p,q}_{\mathrm {BC}}(X_t)\simeq H^q(X_t,V|_{X_t} ).$  
    
    Note  that  Aeppli cohomology is the dual of Bott--Chern cohomology (see (\ref{dual Aeppli})). Hence, in order to associate it with the cohomology group of some sheaf, we next will list a few results concerning the resolutions of the sheaf $\mathscr{H}$ of germs of pluriharmonic functions  originating from \cite{Bg69,AN71}.
	
	Denote by  $\mathscr A^{p,q}$ the sheaf of germs of $C^\infty$ complex differential forms of type $(p,q)$ on an $n$-dimensional complex manifold $X$ and set $A^{p,q}(X)=\Gamma(X,\mathscr A^{p,q})$. One also calls $\Omega^p$ (resp. $\overline{\Omega^{p}}$) the sheaf of germs of holomorphic (resp. antiholomorphic) $p$-forms on $X$.
	
	First of all, we have the exact sequence
	$$
	0 \longrightarrow \mathbb{C} \stackrel{\alpha}{\longrightarrow} \mathcal O \oplus \overline{\mathcal O} \stackrel{\beta}{\longrightarrow} \mathscr{H} \longrightarrow 0
	$$
	where $\alpha(c)=c\oplus(-c)$  and $\beta(f\oplus g)=f+g.$
	
	Secondly we have the following resolutions of types $(1,1)$ and $(2,3)$:
	
	\begin{tikzcd}[column sep=huge]
		& &  \overline{\Omega^{1}}\arrow[dr, "i"] & & \\
		& & \oplus  &\mathscr A^{0,1} \ar[ dr, "\p"]& \\
		0\arrow[r]&\mathscr{H} \arrow[uur, "\bp"]\arrow[r, "i"]\arrow[ddr, "\p"] &\mathscr A^{0,0}  \ar[ur, "\bp"]\ar[ dr, "\p"] & \oplus  &\mathscr A^{1,1} \ar[r, "\p\bp"] & \mathscr A^{2,2}\\
		& & \oplus  &\mathscr A^{1,0} \ar[ ur, "\bp"]& \\
		& &  {\Omega^{1}}\arrow[ur, "i"] & &
	\end{tikzcd}
	
	\begin{tikzcd}[column sep=2.1em]
		& & & & \overline{\Omega^{3}}\arrow[dr, "i"] & & & &  \\
		& & &  \overline{\Omega^{2}} \arrow[ur, "\bp"] \arrow[dr, "i"] & \oplus &\mathscr A^{0,3} \ar[ dr, "\p"]& & & \\
		& & \overline{\Omega^{1} \arrow[ur, "\bp"] }\arrow[dr, "i"] & \oplus &\mathscr A^{0,2} \ar[ur, "\bp"]\ar[ dr, "\p"] &\oplus &\mathscr A^{1,3} \ar[ dr, "\p"]& &\\
		& & \oplus  &\mathscr A^{0,1} \ar[ur, "\bp"]\ar[ dr, "\p"] & \oplus  &\mathscr A^{1,2} \ar[ur, "\bp"]\ar[ dr, "\p"] & \oplus
		&\mathscr A^{2,3}\ar[r, "\p\bp"] & \mathscr A^{3,4}\\
		0\arrow[r]&\mathscr{H} \arrow[uur, "\bp"]\arrow[r, "i"]\arrow[ddr, "\p"] &\mathscr A^{0,0}  \ar[ur, "\bp"]\ar[ dr, "\p"] & \oplus  &\mathscr A^{1,1} \ar[ur, "\bp"]\ar[ dr, "\p"] & \oplus & \mathscr A^{2,2}\ar[ur, "\bp"]\\
		& & \oplus  &\mathscr A^{1,0} \ar[ur, "\bp"]\ar[ dr, "\p"] & \oplus  &\mathscr A^{2,1} \ar[ur, "\bp"] \\
		& & \Omega^{1} \arrow[ur, "i"]\arrow[dr, "\p"] &\oplus &\mathscr A^{2,0} \ar[ur, "\bp"] \\
		& & & \Omega^{2} \arrow[ur, "i"]\\
	\end{tikzcd}
	and in general a resolution of type  $(p,q)$:
	\footnote{Here by (\ref{dual Aeppli}) we may assume  $1\leq p\leq q\leq n$. One should aware that the resolutions are invalid for types $(p,0)$ and $(0,q)$ when trying to relate the Aeppli cohomology groups to some sheaves as shown later.}
	~\\
	
	\begin{tikzcd}[column sep=2.6em,row sep=0.8em]
		0\arrow[r]&\mathscr{H}\ar[r] &\mathscr A^0\arrow[r,"h_0"] &\mathscr A^1\arrow[r, "h_1"]&\cdots\arrow[r]&\mathscr A^{q-1}\arrow[r,"h_{q-1}"]&\,\\
		&\mathscr{B}^q\arrow[r, "h_q"]&\mathscr{B}^{q+1}\arrow[r,"h_{q+1}"]&\cdots\ar[r]&\mathscr{B}^{p+q-1}\arrow[r]&\mathscr{A}^{p,q}\arrow[r,"\p\bp"]&\mathscr{A}^{p+1,q+1},
	\end{tikzcd}
	~\\
	where
	~\\
	$$\mathscr{B}^i=\left\{\begin{array}{ll}
		\mathop\oplus\limits_{s=0}^i \mathscr{A}^{s,i-s}, & \text{for }0\leq i\leq p-1;\\ [0.8em]
		\mathop\oplus\limits_{s=0}^p \mathscr{A}^{s,i-s}, & \text{for }p\leq i\leq q-1;\\ [0.8em]
		\mathop\oplus\limits_{s=i-q}^p \mathscr{A}^{s,i-s}, & \text{for }q\leq i\leq q+p-1,\\
	\end{array} \right.$$
	and
	$$\mathscr{A}^i=\left\{\begin{array}{ll}
		\overline{\Omega^{i+1}}\oplus\mathscr{B}^i\oplus\Omega^{i+1},& \text{for }0\leq i\leq p-1;\\[0.6em]
		\overline{\Omega^{i+1}}\oplus\mathscr{B}^i,& \text{for }p\leq i\leq q-1,\\
	\end{array} \right.$$
	and where the maps  are induced by $\p$, $\bp$ and injection due to the  bidegree.
	
	To use the above resolution, we set
	$$ \mathscr{L}^j=\ker h_j,$$
	and then get the exact sequence of sheaves:
	
	\begin{tikzcd}[column sep=1.8em]\label{ssssssssssssss}
		0\arrow[r]&\mathscr L^q\arrow[r]&\mathscr{B}^q\arrow[r, "h_q"]&\mathscr{B}^{q+1}\arrow[r]&\cdots\arrow[r]&\mathscr B^{p+q-1}\arrow[r]&\mathscr{A}^{p,q}\arrow[r,"\p\bp"]&\mathscr{A}^{p+1,q+1},
	\end{tikzcd}
	
	\noindent and

	\begin{tikzcd}[row sep=tiny]
		&&&	0 \arrow[r]&\mathscr{L}^{q-1}\arrow[r]&\mathscr{A}^{q-1}\arrow[r]&\mathscr{L}^q\arrow[r]&0,	\\
		&&&	0 \arrow[r]&\mathscr{L}^{q-2}\arrow[r]&\mathscr{A}^{q-2}\arrow[r]&\mathscr{L}^{q-1}\arrow[r]&0,	\\[-0.9em]
		&&&\vdots &&\vdots&&\vdots\\[-0.6em]
		&&&	0 \arrow[r]&\mathscr{H}\arrow[r]&\mathscr{A}^{0}\arrow[r]&\mathscr{L}^{1}\arrow[r]&0.	\\
	\end{tikzcd}
	
	Since the sheaves $\mathscr{B}^s$ ($q\leq s\leq p+q-1 $) are fine  sheaves, one obtains
	\begin{equation}\label{roundabout}
		H^{n-p,n-q}_{\mathrm {BC}}(X)\simeq H^{n-q,n-p}_{\mathrm {BC}}(X)\simeq H^{p,q}_\mathrm{A}(X)\simeq H^{q,p}_\mathrm{A}(X)\simeq H^p(X,\mathscr{L}^q).
	\end{equation}
	
	However, it is easy to show that the sheaf  $\mathscr{L}^q$ is not coherent analytic, not to mention $\pi$-flat. Thus, it seems difficult to  apply Theorem \ref{uppersemi} directly to get the desired consequence. The readers probably observe that here we get (\ref{roundabout}) in a roundabout way.  In fact, one can also gain the similar result straightforwardly, i.e., find some sheaves $\mathscr{V}^q,$ such that $H^{p,q}_{\mathrm {BC}}(X)\simeq H^p (X,\mathscr{V}^q)$ by virtue of the corresponding resolutions. These two ways are actually equivalent.
	
	Based on the above discussions, here appears a basic but possibly difficult question:
	
	\begin{question}\label{question}
		Let $\pi:(\mathcal{X}, X) \rightarrow(B, 0)$ be a small holomorphic deformation  of a compact complex manifold $X$. %such that for each $t \in B$ the complex structure on $X_{t}$ is represented by Beltrami differential $\varphi(t).$
		Is the function  $B\ni t\mapsto h^{p,q}_{\mathrm {BC}}(X_t)$  upper semi-continuous with respect to the analytic Zariski topology?
	\end{question}

	\begin{remark}\label{key}
		One can give an affirmative answer to Question \ref{question}
		in a special case, that is any compact complex manifold $X$ satisfying $b_2=0$ and  $(p,q)=(1,1),$ where $b_k:=b_k(X)=\dim_{\mathbb{C}}H^{k}(X,\mb C)$. Indeed, under these assumptions, we have (cf. \cite[Theorem 6.2]{Mc19} for example)
		$$h^{1,1}_{\mathrm {BC}}=h^{n-1,n-1}_\mathrm A=2h^{0,1}_{\bp}-b_1.$$
		
		Recently,  Xia   \cite[Theorem 1.1, Remark 3.6]{Xi21a} confirmed the above question when the type is $(p,0)$ or $(0,q)$. One motivation to consider  Question \ref{question} is :
		
	\end{remark}

	\begin{theorem}\label{uncontable}
		Suppose that the answer to Question \ref{question}  with $\dim B=1$ is positive, and the fibers $X_t$ in an uncountable subset of $B$ are $p$-K\"{a}hler. Then any very general fiber is still $p$-K\"{a}hler. Here a very general fiber $X_b$ is a fiber over $b\in B$ that belongs to  $V$ which is a complement of countably many Zariski closed proper subsets $Z_i$ of $B$.
	\end{theorem}
	
	\begin{proof}
		The theorem follows as long as one notices that the $\p\bp$-property in Theorem \ref{pkahler introduction}  actually can be replaced by the deformation invariance of $(p,p)$-Bott--Chern numbers as shown in \cite[Remark 4.13]{RWZ21}.
	\end{proof}
	
	Noteworthy is the following question about the analytic openness property of the $\p\bp$-property.
	
	\begin{question}[{\cite[p. 2]{Xi21a}}]\label{xia's question}
		With the setting of Question \ref{question}, is the set
		$$T:=\left\{t\in B \,\, |\,\, X_t \text{ satisfies the } \p_t{\bp}_t\text{-property}\right\}$$ an analytic open set (i.e., complement of an analytic subset) of $B$?
	\end{question}
	
	If one can confirm  Question  \ref{xia's question} with $T\neq\varnothing$,  then  $T=B$ or $T=B\setminus\left\{0\right\}$ possibly after shrinking the open base $B\subset\mb C$.
	\begin{proposition}
		If the answer to Question \ref{question} is positive, then so is the answer to Question \ref{xia's question}.
	\end{proposition}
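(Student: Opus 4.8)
The plan is to express the failure of the $\partial_t\bar\partial_t$-lemma fibrewise as a single numerical inequality among Bott--Chern numbers, and then feed in the hypothesis. The key external input is the Frölicher-type characterization of Angella--Tomassini \cite{AT13}: for every compact complex $n$-manifold $X$ one has
\[
\Delta^k(X):=\sum_{p+q=k}\bigl(h^{p,q}_{BC}(X)+h^{p,q}_{A}(X)\bigr)-2b_k(X)\ \geq\ 0 ,
\]
and $X$ satisfies the $\partial\bar\partial$-lemma if and only if $\Delta^k(X)=0$ for every $k$. Using the duality $h^{p,q}_{A}=h^{n-p,n-q}_{BC}$ recorded in \eqref{dual Aeppli} and summing over $k$, the double sum of the $h^{p,q}_{A}$ rewrites as $\sum_{p,q}h^{p,q}_{BC}$, so that, with $E(X):=\sum_{p,q}h^{p,q}_{BC}(X)$,
\[
\sum_k\Delta^k(X)=2\Bigl(\textstyle\sum_{p,q}h^{p,q}_{BC}(X)-\sum_k b_k(X)\Bigr)\ \geq\ 0 .
\]
Hence $X$ satisfies the $\partial\bar\partial$-lemma exactly when $E(X)=\sum_k b_k(X)$; equivalently, since the displayed inequality gives $E(X)\geq\sum_k b_k(X)$ always, exactly when $E(X)\leq\sum_k b_k(X)$.

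Next I would place this in the family $\pi:(\mathcal X,X)\to(B,0)$. By Ehresmann's theorem $\pi$ is a differentiable fibre bundle, so the Betti numbers are constant along it; put $b:=\sum_k b_k(X_t)$, a fixed integer. Then
\[
B\setminus T=\bigl\{\,t\in B\ :\ X_t\ \text{does not satisfy the}\ \partial_t\bar\partial_t\text{-lemma}\,\bigr\}=\bigl\{\,t\in B\ :\ E(X_t)\geq b+1\,\bigr\},
\]
so it suffices to show the right-hand side is an analytic subset of $B$. Here the positive answer to Question \ref{question} enters: it asserts that each function $t\mapsto h^{p,q}_{BC}(X_t)$ is upper semi-continuous for the analytic Zariski topology, i.e. $\{t\in B:h^{p,q}_{BC}(X_t)\geq d\}$ is analytic for every $d\in\mathbb Z$. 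I would then prove the elementary lemma that a finite sum of $\mathbb Z$-valued functions with this property again has it. After shrinking $B$ to a relatively compact polydisc (harmless, as Question \ref{xia's question} concerns a germ), the classical upper semi-continuity of \cite{KS60} bounds each $t\mapsto h^{p,q}_{BC}(X_t)$ uniformly on $B$; consequently, for fixed $d$,
\[
\{t\in B:E(X_t)\geq d\}=\bigcup_{\sum_{p,q}d_{p,q}=d}\ \bigcap_{p,q}\ \{t\in B:h^{p,q}_{BC}(X_t)\geq d_{p,q}\},
\]
where each coordinate $d_{p,q}$ is forced into a bounded interval (bounded above by the uniform bound, and bounded below because the $d_{p,q}$ sum to $d$ with the others bounded above), so the union is in fact \emph{finite}; being a finite union of finite intersections of analytic sets, it is analytic. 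Taking $d=b+1$ shows $B\setminus T$ is analytic, hence $T$ is analytic open, which is the assertion.

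I expect the only genuinely delicate point to be the last lemma — specifically, ensuring that the relevant unions of analytic sets are finite rather than merely countable, which is exactly what forces the harmless shrinking of $B$ and the appeal to classical upper semi-continuity; everything else is formal bookkeeping with \eqref{dual Aeppli} and the cited characterization of the $\partial\bar\partial$-lemma. A cosmetically different but equivalent write-up keeps the numbers $\Delta^k$ rather than $E$, writing $B\setminus T=\bigcup_{k=0}^{2n}\{t:\Delta^k(X_t)\geq1\}$ with $\Delta^k(X_t)+2b_k=\sum_{p+q=k}h^{p,q}_{BC}(X_t)+\sum_{p+q=2n-k}h^{p,q}_{BC}(X_t)$, and checks each of these finitely many sets is analytic by the same lemma; I would adopt whichever version reads more smoothly in context.
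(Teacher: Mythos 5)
Your proposal is correct and takes essentially the same route as the paper: the Angella--Tomassini characterization $h^{k}_{BC}(X_t)+h^{k}_{A}(X_t)\ge 2b_k$ with equality for all $k$ iff the $\partial_t\bar\partial_t$-lemma holds, the duality $h^{p,q}_{A}=h^{n-p,n-q}_{BC}$ to reduce everything to Bott--Chern numbers, and the fact that analytic-Zariski upper semi-continuity passes to finite sums of these (nonnegative integer-valued) functions; indeed your closing ``per-$k$'' variant is exactly the paper's proof. Your extra care about finiteness of the unions is not needed (one may restrict to tuples $d_{p,q}\in\{0,\dots,d\}$ since the summands are nonnegative integers), but it does no harm.
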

	\begin{proof}
		\cite[Theorem]{AT13} tells us that for every $k\in\mb N$ and $t\in B$,
		\beq \label{angella}h_{\mathrm {BC}}^k(X_t)+h_\mathrm{A}^k(X_t)\ge2b_k(X_t)=2b_k,\eeq
		where $h_{\mathrm {BC}}^k(X_t):=\sum_{p+q=k}h^{p,q}_{\mathrm {BC}}(X_t)$ and $h_{\mathrm{A}}^k(X_t):=\sum_{p+q=k}h^{p,q}_{\mathrm{A}}(X_t).$
		Moreover, the equality in (\ref{angella}) holds if and only if $X_t$ satisfies the $\p_t{\bp}_t\text{-property}.$
		
		As the sum of finitely many upper semi-continuous functions (with respect to any topology) is still upper semi-continuous, one can combine it with (\ref{dual Aeppli}) to see that
		$$T=B\setminus\bigcup_{k\in \mb N }\left\{t\in B \,\, |\,\, h_{\mathrm {BC}}^k(X_t)+h_\mathrm{A}^k(X_t)\ge2b_k+1\right\}$$
		is an analytic open subset of $B$.
	\end{proof}

		\section{Local stabilities of transversely \texorpdfstring{$p$}{p}-K\"ahler foliations}\label{LOCAL}
	We first introduce the concept of transversely $p$-K\"ahler foliation,
	then extend the `weak' $\partial\bp$-properties to the foliated case.   We strengthen  the local stabilities theorems in \cite{EKAG97, Ra21} to the transversely $p$-K\"ahler cases, which can also be viewed as the foliated version of the main results in \cite{RWZ19}.  In a sense, our use of the power series approach has led to increased efficiency when contrasted with the techniques employed in \cite{EKAG97,Ra21}, as illustrated in Remark \ref{Non}.

	\subsection{Transversely $p$-K\"ahler structures}\label{subsection transversely }
	In this subsection, we first provide a rapid review of  transverse structures on foliations (see \cite{Mo88,Ni11} for example), and then give the definition of transversely $p$-K\"ahler foliation.  Furthermore, a class of examples of homologically orientable transversely Hermitian foliations on compact nilmanifolds that are transversely $p$-K\"ahler (with $p>1),$ but not transversely K\"ahler  are displayed.
	\begin{definition}
		A \textit{foliation} $\mathcal F$  of  codimension $s$ on an $m$-dimensional smooth manifold $M$ is a \textit{cocycle} $\mathscr U=(\{U_i\},\{\pi_i\},\{\gamma_{ij}\})$ modelled on an $s$-dimensional smooth manifold $N_0$ consisting of
		\begin{enumerate}[$(i)$]
			\item an open covering $\{U_i\}$ of $M$,
			\item submersions $\pi_i:U_i\to N_0$ with connected fibers defining $\mathcal{F}$,
			\item diffeomorphism transition functions $\gamma_{ij}:\pi_j(U_i\cap U_j)\to \pi_i(U_i\cap U_j)$ such that $\pi_i=\gamma_{ij}\circ \pi_j$.
		\end{enumerate}
		
	\end{definition}
	We call the disjoint union $N=\mathop\coprod_{U_{i}\in \mathscr U}
	\pi_{i}\left(U_{i}\right)$  the \textit{transverse manifold} of $\mathcal{F}$ associated to the cocycle $\mathscr U.$ The local diffeomorphisms $\gamma_{i j}$ generate a pseudogroup $\mathcal H$ of transformations on $N$. One can equate the space of leaves $M / \mathcal{F}$ of the foliation $\mathcal{F}$ with $N/ \mathcal H$.
	
	A \textit{transverse structure} to $\mathcal{F}$ is an $\mc H$-invariant geometric  structure on $N$. For instance:
	
	\begin{definition} \label{definitian transversely holom}The foliation $\mathcal{F}$ on $M$ is:
		\begin{enumerate}[$(i)$]
			\item \textit{Riemannian} if there exists an $\mathcal H$-invariant Riemannian metric on $N$, which is equivalent to the existence of a Riemannian metric $g$ with $L_Xg=0$ for any $X$ in the \textit{normal bundle} $\mathcal N\mathcal{F}:=TM/T\mathcal F$, where $T\mathcal{F}$ is the bundle tangent to the leaves;
			\item  \textit{transversely holomorphic} if on $N$ there exists a complex structure of which $\mc H$ is a pseudogroup of local holomorphic transformations, which is equivalent to the existence of an almost complex structure $J$ on $\mathcal N\mathcal{F}$ such that
			\begin{enumerate}[(a)]
				\item $L_X J=0$ for any vector field $X$ tangent to the leaves,
				
				\item 	for any two  sections $Z_i\,(i=1,2)$ of the normal bundle, we have $$N_J[Z_1,Z_2]:=[JZ_1,JZ_2]-J[Z_1,Z_2]-J[JZ_1,Z_2]+J^2[Z_1,Z_2]=0;$$
				
			\end{enumerate}
			\item  \textit{transversely Hermitian} if it is transversely holomorphic and there exists an $\mathcal H$-invariant Hermitian metric on $N$;
			\item \textit{transversely K\"ahler} if on $N$ there exists an $\mathcal H$-invariant K\"ahler structure.
		\end{enumerate}
	\end{definition}
	
	\begin{remark}\label{relationship foliation cpst}
		By Definition \ref{definitian transversely holom} (ii), one knows that a transversely holomorphic foliation of complex codimension $s$ with $m=2s$ is nothing but a \textit{complex structure} on $M$, likewise
		for the the last two cases.
	\end{remark}
	\begin{definition}
		A foliation $\mathcal{F}$ is said to be \textit{transversely orientable} if the normal bundle $\mc N \mc F$ is orientable. This is equivalent to the orientability of $N$ plus the orientation preservations of all  $\gamma_{i j}$.
	\end{definition}
	\begin{definition}
		A foliation is said to be \textit{transversely parallelizable} (abbreviated as TP) if there exist $s$ linearly independent $\mc H$-invariant vector fields.
	\end{definition}

	\begin{definition}
		A smooth form $\alpha$ on $M$ is called \textit{basic} if for any vector field $X$ tangent to the leaves of $\mc F$ the equalities
		$\iota_X\alpha=\iota_X{d\alpha}=0$ hold.
		Basic $0$-forms all also called \emph{basic functions}.
	\end{definition}
	
	Basic forms on the foliated manifold $(M,\mc F)$ are in one-to-one correspondence with $\mc H$-invariant forms on $N$. For any basic form $\alpha$, it is easy to see that $d\alpha$ is also basic. Hence, the set $A^\bullet (M/\mathcal{F})$ of basic forms is a subcomplex of $(A^\bullet(M),d).$ The  cohomology of $(A^\bullet (M/\mathcal{F}),d)$ is called the \textit{basic cohomology} of $(M,\mathcal{F})$ and denoted by $H^\bullet (M/\mathcal{F})$. %The dimension of $H^{k}(M,\mc F)$ is called the $k$-th basic Betti number of $(M,\mc F)$ and denoted by $b^{k}(M,\mc F)$.
	For a Riemannian foliation of codimension $s$ on a connected compact manifold $M$, $H^{s}(M/\mc F)=\mb  R$ or $H^{s}(M/\mc F)=0$ by a result of \cite{EKASH85}. One then has
	\begin{definition}\label{homolo}
		The foliated manifold $(M,\mc F)$ (or just $\mc F$) is called \textit{homologically orientable}  if $H^{s}(M/\mc F)=\mb  R$.
	\end{definition}
	\begin{remark}\label{taut}
		The above condition is equivalent to the existence of a (real) volume form on the leaves $\chi$ which is relatively closed, that is, $d\chi(X_1,\ldots,X_{m-s},Y)=0$ for $X_1,\ldots,X_{m-s}$ tangent to $\mc F$, cf. \cite{Ru79}. In that case, one can complete the transverse metric by  a Riemannian metric on the whole manifold for which the leaves are minimal and $\chi$ is associated to this metric and we then can also say that $\mc F$ is \textit{taut}.
	\end{remark}
	
	Furthermore,  suppose that  the foliation $\mathcal{F}$ is transversely holomorphic of complex codimension $r$ with $s=2r$. We say that a system of local coordinates  $$(x,z)=\left(x^{1}, \ldots, x^{m-s}, z^{1}, \ldots, z^{r}\right)$$  on $U_i$ is  \textit{adapted} to $\mc F$ (or more precisely to the submersion $\pi_{i}$) if for each point $w\in U_i$, one has $\pi_{i}(w)=\left(z^1(w),\ldots, z^{r}(w)\right)$. On $U_{i}$, and in terms of adapted coordinates, the leaves of $\mathcal{F}$ are defined by $z^{j}=$ constant.
	This notion surely can be defined similarly for general foliations. So in adapted coordinates $(x, z)$ the tangent bundle $T \mathcal{F}$ is generated by the vector fields $\frac{\partial}{\partial x^{1}}, \ldots, \frac{\partial}{\partial x^{m-s}} .$ That is, $T \mathcal{F}$ is the kernel of the differential $d \pi_{i}$. As in the case of a complex structure, the complexification $\mathcal N \mathcal{F}^{\mb C}$ of the normal bundle $\mathcal N \mathcal{F}$ of a transversely holomorphic foliation $\mathcal{F}$ decomposes as
	$$
	\mathcal N \mathcal{F}^{\mb C}=\mathcal N \mathcal{F}^{1,0} \oplus \mathcal N \mathcal{F}^{0,1}
	$$
	and the corresponding dual bundles fulfill the same relation $\mathcal N^{*} \mathcal{F}^{\mb C}=\mathcal N^{*}  \mathcal{F}^{1,0} \oplus$ $\mathcal N^{*}\mathcal{F}^{0,1}$. Notice that, locally,
	$$
	\mathcal N^{*} \mathcal{F}^{1,0}=\left\langle d z^{1}, \ldots, d z^{r}\right\rangle_{\mathbb{C}} \quad \text { and } \quad \mathcal N^{*} \mathcal{F}^{0,1}=\left\langle d \bar{z}^{1}, \ldots, d \bar{z}^{r}\right\rangle_{\mathbb{C}}.
	$$

Let $\tau:E\rightarrow M$ be a complex vector bundle of rank $N$ defined by a cocycle $\left\{V_i,g_{ij}\right\}$, where $\left\{V_i\right\}$ is an open cover of $M$ and ${g_{ij}}$ are the transition  functions $g_{ij}:V_i\cap V_j\rightarrow\text{GL}(N,\mathbb C)$ satisfying the cocycle condition:
	$$g_{ij}(w)=g_{ik}(w)\cdot g_{kj}(w) \,\,\text{ for }\, w\in V_i\cap V_j\cap V_k.$$
	We say that $E$ is an $\mathcal{F}$-\textit{bundle} if the functions $g_{ij}$ are basic on $V_i\cap V_j.$
	Then for a $\mathcal F$-bundle $E$ on $M$, a section $\alpha\in C^\infty(M,E)$ is said to be  \textit{basic} if the local representative functions of $\alpha$ with respect to an adapted system of local coordinates  which can trivialize $E$ are basic. It can be proved easily that $\bigwedge^{k} (\mathcal N^{*} \mathcal{F}^{\mb C})$, the $k$-th exterior product of the complexification of the conormal bundle of $\mathcal{F}$,   is an $\mathcal{F}$-bundle. Therefore, we can view \emph{basic $k$-forms}  as basic sections of $\bigwedge^{k} (\mathcal N^{*} \mathcal{F}^{\mb C})$. And a basic $k$-form $\alpha$ is
	of \emph{pure type $(p, q)$} if for any point of $M$ there exists an adapted system of local coordinates described as above  such that
	\begin{equation*} \label{foliation explicit}
		\alpha(x,z)= \sum_{1\leq i_1<\cdots<i_p\leq r\atop 1\leq j_1<\cdots<j_q\leq r}f_{i_1\cdots
			i_p {j_1}\cdots {j_q}}(z,\b z)d z^{i_{1}} \wedge \cdots \wedge d z^{i_{p}} \wedge d \bar{z}^{j_{1}} \wedge \cdots \wedge d \bar{z}^{j_{q}}.
	\end{equation*}

Let us denote by
	$A^k_\mathbb{C}(M/\mathcal F)\quad \text{(resp. $A ^{p,q}(M/\mathcal F)$)}$
	\textit{the space of complex valued basic $k$-forms} (resp. \textit{the space of complex valued basic forms of pure type $(p,q)$}) on the transversely holomorphic foliated manifold $(M,\mc F).$ Then,
	$$A_{\mathbb{C}}^{k}(M/\mathcal{F})=\bigoplus_{p+q=k} A^{p, q}(M/\mathcal{F}).$$
	Also note that the exterior derivative $d$  can be decomposed into the sum of operators $\partial$ and $\bp$ of order $(1,0)$ and $(0,1)$, respectively. In other words,
	\begin{equation*}\label{partial and barpartial}
		\partial:A^{p, q}(M/\mathcal F) \longrightarrow A^{p+1, q}(M/\mathcal F) \quad \text { and } \quad \bar{\partial}: A^{p, q}(M/\mathcal F)\longrightarrow A^{p, q+1}(M/\mathcal F).
	\end{equation*}
	The differential complex $(A^{p,\bullet}(M/\mc F),\bp)$ is called \textit{the $p$-th basic Dolbeault complex of $(M,\mc F)$}. Its cohomology is called the $p$-th  \emph{basic Dolbeault cohomology} of $(M,\mc F)$ which is denoted by $H^{p,\bullet}(M/\mc F)$. We denote by $h^{p,q}_{\bp}(M/\mc F)$ the dimension of $H^{p,q}(M/\mc F)$ over $\mb C$ and call it the \emph{$(p,q)$-basic Hodge number}.

	Next, we introduce the following new notion on the transverse structures of foliations, which can be seen as the transverse version of
	Definition \ref{pkf}. %For the concepts of transverse positivity on Sasakian manifolds, see \cite[p. 20]{FZ19}.
	
	\begin{definition}\label{TRAN P-Kahler}
		A transversely holomorphic  foliation $\mc F$ of complex codimension $r$ is said to be \textit{transversely $p$-K\"ahler} for the integer $p$ with $1\leq p\leq r$ if there exists an $\mathcal{H}$-invariant $p$-K\"ahler structure   on $N$, or equivalently, if $\mc F$ admits a \textit{transversely $p$-K\"ahler form}, that is a $d$-closed transverse positive $(p,p)$-basic form. Here we call a real $(p,p)$-basic form $\Omega$ \textit{transverse positive}  if at any given $w\in M$, for any independent $\tau_j\in(\mathcal N^{*} \mathcal{F}^{1,0})_w$, $1\leq j\leq {r-p}$,
		$$\Omega(w)\wedge\sqrt{-1}\tau_1\wedge\overline{\tau_1}\wedge\cdots\wedge\sqrt{-1}\tau_{r-p}\wedge\overline{\tau_{r-p}}$$
		is strictly positive, or equivalently, if for any point of $M$ there exists an adapted system of local coordinates $(U,x,z)$ containing it such that
		for any independent $\tau_j\in \mathcal N^{*} \mathcal{F}^{1,0}|_U, 1\leq j\leq{r-p},$
		$$\Omega|_U\wedge\sqrt{-1}\tau_1\wedge\overline{\tau_1}\wedge\cdots\wedge\sqrt{-1}\tau_{r-p}\wedge\overline{\tau_{r-p}}$$
		is strictly positive.
	\end{definition}
	
	\begin{remark}
		\begin{enumerate}
			\item Notice that  in Definition \ref{TRAN P-Kahler},  $\tau_j$  is not  required to come from a  global basic form. Indeed, different from the classical case, we can't always extend a nonzero local basic form to be a global one due to the lack of `transverse version' of unity of partition. See for example \cite[Definition of the degree of a transversely Hermitian vector bundle (1.3)]{BH22} for the same reason.
			\item Our `transverse positive' definition for a basic form is equivalent to the one  in \cite[p. 8]{ZZ21} (although they are more concerned with homologically orientable transversely holomorphic foliation).
		\end{enumerate}
	\end{remark}

	\begin{remark}\label{nvwa}
		Analogously to Lemma \ref{pkb}, one can show that a foliation $\mc F$ is transversely 1-K\"ahler (resp. transversely $(r-1)$-K\"ahler) if and only if $\mc F$ is transversely K\"ahler (resp. transversely balanced). Here, we say that $\mathcal{F}$ is \emph{transversely balanced} if it admits a real transverse positive $(1,1)$-basic form $\omega$ such that $\omega^{r-1}$ is $d$-closed, or equivalently, if it admits a $d$-closed strictly positive $(r-1,r-1)$-basic form, cf. \cite[(4.8)]{Mich}. Similarly, we say that $\mathcal{F}$ is \emph{transversely Gauduchon}  if it admits a real transverse positive $(1,1)$-basic form $\omega$ such that $\omega^{r-1}$ is $\p\bp$-closed mimicking the definition as in the complex manifold case, cf. \cite{Ga77}. Clearly, every transversely balanced foliation is
		also transversely Gauduchon.
	\end{remark}
	
	Next, we shall present a class of examples of homologically orientable transversely Hermitian foliations on compact nilmanifolds which are transversely $p$-K\"ahler (with $p>1),$ but not transversely K\"ahler, motivated by \cite{CW90} and \cite{AB91}.
	
	\begin{example}\label{nilmanifold}
		
		Let $N_{2r+1}$ be the following Heisenberg type group (which is a simply connected nilpotent  Lie group):
		\begin{equation*}
			N_{2r+1}:=\left\{A\in \text{GL}(r+2,\mb C)\,\, |\,\,A=\begin{bmatrix}
				1 &  X & z \\
				0 & I_r & Y \\
				0  & 0& 1 \\
			\end{bmatrix} \text{ where } z\in\mb C\text { and } {X,Y\in \mb C^r}\right\},
		\end{equation*}
		and let $G_{2r+1}$ be the discrete subgroup of $N_{2r+1}$ all of whose entries are Gaussian integers $\alpha_1+\sqrt{-1}\alpha_2$ with $\alpha_i\in \mb Z$. Then the homogeneous manifold $N_{2r+1}/G_{2r+1}$ becomes  a holomorphically parallelizable compact connected complex manifold, denoted by $\eta\beta_{2r+1}$. For $r=1$, $\eta\beta_3$ is the \textit{Iwasawa manifold}, the classical example of compact non-K\"ahler threefold, cf. \cite[p. 205]{AB91}.
		
		Now following the ideas presented in the construction of examples for foliations with specific structures (as discussed in \cite[Section 2]{CW90}), we intend to create a class of examples in question by use of  the manifold $\eta\beta_{2r+1}$. Let $\Gamma_{2r+1}$ be the following finitely generated subgroup of $N_{2r+1}$ of matrices of the form (which contains the uniform group $G_{2r+1}$ of $N_{2r+1}$):
		\begin{equation*}
			\begin{bmatrix}
				1 &  X+\boldsymbol{s}X^\prime & z+\boldsymbol{s} z^\prime \\
				0 & I_r & Y \\
				0  & 0& 1 \\
			\end{bmatrix}
		\end{equation*}
		where $X, X^\prime,Y$ are matrices with Gaussian integer entries, $z,z^\prime$ also are Gaussian integers and $\boldsymbol{s}$  is a fixed irrational number.
		
		Then $\Gamma_{2r+1}$ can be identified with the Gaussian integer lattice of the group $U_{3r+2}$ of matrices of the form:
		\[
		\begin{bmatrix}
			1 &x_1 &\cdots& x_r&x_1^\prime &\cdots&x^\prime_r &z&z^\prime \\
			0&1&\cdots &0&0&\cdots&0&y_1&0\\
			0&0&\cdots&\vdots&\vdots&\vdots&\vdots&\vdots&\vdots \\
			0&0&0&1&0&\cdots&0&y_r&0\\
			0&0&0&0&1&\cdots&0&0&y_1\\
			0& 	0&	0&	0&	0&\ddots&\vdots&\vdots&\vdots\\
			0&	0&	0&	0&	0&	0&1&	0&y_r\\
			0&	0&	0&	0&	0&	0&	0&1&	0\\
			0&	0&	0&	0&	0&	0&	0&	0& 1\\
		\end{bmatrix}
		\]
		where $x_i, x_i^\prime, y_i, y_i^\prime, z, z^\prime \in\mb C$ for any $i\in\left\{1,\ldots,r\right\}$. Furthermore, there exists a surjective submersion $u_r:U_{3r+2}\rightarrow N_{2r+1}$ (also with connected fibers) given by the correspondence
		$$\left(x_i,x_i^\prime,z,z^\prime, y_i,y_i^\prime\right)\longmapsto\left(x_i+\boldsymbol{s}x_i^\prime,z+\boldsymbol{s}z^\prime,y_i\right).$$ The foliation defined by the submersion $u_r$ is $\Gamma_{2r+1}$-invariant and therefore it projects to a complex $(2r+1)$-codimensional foliation,
		denoted by  $\mathcal{F}_{2r+1}$ (which
		depends on $U_{3r+2}, \Gamma_{2r+1}$ and $u_r$),  on a complex $(3r+2)$-dimensional compact manifold $M_{3r+2}:=U_{3r+2}/\Gamma_{2r+1}$. 
        
Noteworthy to mention that any foliated geometric structures on the foliated manifold $(M_{3r+2}, \mc F_{2r+1})$ correspond bijectively to $\Gamma_{2r+1}$-invariant ones on $N_{2r+1}$. In particular, the basic forms on $(M_{3r+2}, \mc F_{2r+1})$ are in one-to-one correspond with the $\Gamma_{2r+1}$-invariant forms on $N_{2r+1}$.  So the same reasoning  as in \cite[Section 4]{AB91} will enable one to show that for arbitrary $r$, $\mc F_{2r+1}$ is not transversely $p$-K\"ahler for $1\leq p\leq r$ and it is transversely $p$-K\"ahler for $r+1\leq p\leq {2r+1}$. The homologically orientability property of $\mc F_{2r+1}$ can be proved utilizing the method provided in \cite[p. 182]{CW91} similarly.
	\end{example}

	\subsection{Transversely elliptic operators}
	In this subsection, we recall the notion of a transversely elliptic basic differential operator and the construction of the scalar product both  focusing on the space of  basic forms, cf. \cite{EKA90}. For the rest of this paper, unless otherwise stated, we always assume that the foliation $\mc F$ is transversely Hermitian of complex codimension $r$ with $s=2r$ and that the manifold $M$ is compact.
	\begin{definition}\label{basic differential operator}
		A \textit{basic differential operator of order} $l$ \textit{on the space of complex valued basic forms} is a linear map $D: A^{\bullet}_\mb C(M /\mathcal{F}) \rightarrow A^{\bullet}_\mb C(M /\mathcal{F})$ such that in adapted coordinates $\left(x^{1}, \ldots, x^{m-s}, y^{1}, \ldots, y^{s}\right)$  it has the expression:
		$$
		D=\sum_{|\mu| \leq l} a_{\mu}(y) \frac{\partial^{|\mu|}}{\partial ^{\mu _1} y^{1} \cdots \partial^{\mu_s} y^{s}}
		$$
		where $\mu=\left(\mu_1,\ldots,\mu_s\right)\in\mb N^s$,\,$|\mu|=\mu_ 1+\cdots+\mu_s$ and $a_{\mu}$ are matrices of appropriate size with basic functions as coefficients.
		The \textit{principal symbol} of $D$ at the point $w=(x, y)$ and the basic
		covector $\xi \in ({\mathcal N^{*} \mathcal{F}})_w$, $\xi=\left(\xi_1,\ldots,\xi_{s}\right)$ is the linear map $$\sigma(D)(w, \xi): \bigwedge^\bullet({\mathcal N^{*} \mathcal{F}}^\mb C)_w \longrightarrow \bigwedge^\bullet({\mathcal N^{*} \mathcal{F}}^\mb C)_w$$ defined by
		$$\sigma(D)(w, \xi)(\eta)=\sum_{|\mu|=l} \xi_{1}^{\mu_{1}} \cdots \xi_{s}^{\mu_{s}} a_{\mu}(y)(\eta).$$
		We say that $D$ is \textit{transversely elliptic} if $\sigma(D)(z, \xi)$ is an isomorphism for every $w \in M$ and every basic covector $\xi$ different from $0$.
	\end{definition}
	
	\begin{Construction}\label{CONSTRUCTION}
		Now we come to briefly introduce the construction of the scalar product on the space  $A^{p,q}(M/\mc F)$ of $(p,q)$-basic forms, viewed as basic sections of $$N_{p,q}:=\bigwedge^{p,q}(\mc N^*\mc F^\mb C).$$  In this paper,  we will always follow this construction.   One starts with the principal $SO(s)$-bundle $$p: {M}^{\#} \longrightarrow M$$ of orthonormal frames transversal to $\mc F$, lifting the foliation ${\mc F}$ to a TP transversely Hermitian foliation ${\mc F}^{\#}$ on ${M}^{\#}$ with $\dim {\mathcal F}^{\#}=\dim \mc F.$
		In addition, the foliation is  $SO(s)$-invariant, i.e., for any element $a\in SO(s)$ and any leaf of $\mc F^{\#}$, $a(L)$ is also a leaf of $\mc F^\#$. We can further choose some transverse metric such that it is invariant with respect to the $SO(s)$-action and the fibers of  $p: {M}^{\#} \rightarrow M$  are of measure $1$.
		
		Let $N_{p,q}^{\#}=p^* N_{p,q}$. Then $N_{p,q}^{\#}$ is an $SO(s)$-bundle and in fact a Hermitian $\mc F^{\#}$-bundle (cf. \cite[Definition 2.5.2]{EKA90}). Also, one has the following canonical isomorphism:
		\begin{equation}\label{canonical isomophism 1}
			A^{p,q}(M/\mc F)\longrightarrow C^\infty _{SO(s)}(N_{p,q}^{\#}/\mc F^{\#}),
		\end{equation}
		where $C^\infty _{SO(s)}(N_{p,q}^{\#}/\mc F^{\#})$ is the space of  basic sections of $N_{p,q}^{\#}$ which are invariant under the action of $SO(s)$.
		By invoking the results in  \cite{Mo82}, one can get a compact manifold $W$ and a fiber bundle $$\pi:{M}^{\#}\longrightarrow W$$ such that the fibers amount to the closures of leaves of ${\mc{F}}^{\#}$, and then extend the above transverse metric to a Hermitian metric on ${M}^{\#}$ such that the fibers of $\pi:{M}^{\#}\rightarrow W$ also possess measure 1. 
        
        We then call the manifold $W$ the \textit{basic manifold} of $\mc F$. The $SO(s)$-action on ${M}^{\#}$ descends to an $SO(s)$-action on $W$.
		The bundle $N_{p,q}^{\#}$ on $M^{\#}$ is associated with a Hermitian bundle denoted by $\widetilde{N_{p,q}}$ on $W$, which will be called the \textit{useful bundle} corresponding to $N_{p,q}$. We will denote the Hermitian scalar product on $\widetilde{N_{p,q}}$ by $\widetilde{h_{p,q}}$, which is indeed induced by the transversely Hermitian metric on $(M,\mc F)$. One can show that  $\widetilde{N_{p,q}}$  is also an $SO(s)$-bundle and  there exists a canonical isomorphism of $A(W)$-module (here $A(W)$, as the ring of smooth functions on $W$, is identified with the ring of basic functions on $M$) by utilizing (\ref{canonical isomophism 1}):
		\begin{equation}\label{!canonical isomophism 2}
			\mathscr{B}: A^{p,q}(M/\mc F)\longrightarrow C^{\infty}_{SO(s)}(\widetilde{N_{p,q}}).
		\end{equation}
		where $C^{\infty}_{SO(s)}(\widetilde{N_{p,q}})$ denotes the space of smooth sections of $\widetilde{N_{p,q}}$ which are invariant under the action of $SO(s)$.
		
		Now, for arbitrary $\alpha,\beta\in A^{p,q}(M/\mc F)$, we define their scalar product by:
		$$\label{scalar}
		\langle\alpha,\beta\rangle:=\int_W \widetilde{h_{p,q}}\left(\mathscr{B}(\alpha),\mathscr B(\beta)\right)(\textrm{w})\,d\mu(\textrm{w}),
		$$
		where $\mu$ is the volume form associated with the Hermitian  metric induced by that of $M^{\#}$.
		The transverse $\star$-operator can be defined fiberwise on the orthogonal complements of the spaces tangent to the leaves standardly. Furthermore, one can define the H\"older norms or Sobolev norms in the space $A^{p,q}(M/\mc F)$ thanks to the isomorphism (\ref{!canonical isomophism 2}). For example, for  $\v\in A^{p,q}(M/\mc F)$ and $k\geq 0$ ($k\in \mb Z$), $0<\alpha<1$ ($\alpha\in\mb R$), we set
		\begin{equation*}\label{Holder foliation}
			||\varphi||_{k+\alpha}:=||\mathscr{B}(\varphi)||_{k+\alpha}.
		\end{equation*}
		The readers can refer to \cite[Section 3.5.6]{EKA90} for more details.
	\end{Construction}
	
	\begin{remark}\label{ADJOINT}
		The scalar product constructed as above can be used to define $\delta$ (resp. $\p^*$, resp. ${\bp}^*$) as the operators adjoint to $d$ (resp. $\p$, resp. $\bp$). We then can define the \textit{basic Dolbeault Laplacian} operator (likewise for $\square^\mathrm b_ d$ and $\square^\mathrm b_{\p}$):
		$$\square_{\bp}^\mathrm b:=\bp{\bp}^*+{\bp}^*\bp.$$
		Noteworthy is the following formula showed in
		\cite[Proposition 2.2]{ER96}, cf. also \cite{AL92}:
		\begin{equation*}\label{adjoint-d}
			\delta\beta=(-1)^{s(p+1)+1} {\star} (d +(P \zeta)\wedge)\star \beta,
		\end{equation*}
		where $\beta$ is a complex valued $p$-basic form and $P\zeta$ is a $1$-basic form dependent on $\mc F$ (a marginally modified mean curvature). 
        
        One  can also refer to \cite[line 10 in p. 1262]{ER96} for the explicit definition for the operator $$\star: A^p_\mb C(M/\mc F)\rightarrow A^{s-p}_\mb C(M/\mc F).$$
		And we have
		\begin{equation*}\label{2id}
			\star^2={(-1)}^{p(s-p)}\text{id}.
		\end{equation*}
		We split $P\zeta$ into forms $\zeta_1$ and $\zeta_2$ of types $(1,0)$ and $(0,1)$, respectively, to get
		\begin{equation}\label{adjoint-pb}
			{\p}^*\beta=(-1)^{s(p+1)+1} {\star} (\p+\zeta_1\wedge)\star \beta,\,\,\,\,\text{and}\,\,\,\,\,\,{\bp}^*\beta=(-1)^{s(p+1)+1} {\star} (\bp+\zeta_2\wedge)\star\beta
		\end{equation}
		with respect to the induced operator \begin{equation}\label{star}
			\star: A^{p_1,p_2}(M/\mc F)\rightarrow A^{r-p_1,r-p_2}(M/\mc F)
		\end{equation}
		with $p_1+p_2=p$.
		
		If $\mc F$ is further required to be  transversely K\"ahler with a transverse K\"ahler form $\omega$. We then can define the \textit{transverse Lefschetz} operator:
		$$L:A^{p_1,p_2}(M/\mc F)\rightarrow A^{p_1+1,p_2+1}(M/\mc F),\,\,L\alpha:=\alpha\wedge\omega$$
		and its joint \textit{transverse dual Lefschetz operator}
		$$\Lambda: A^{p_1,p_2}(M/\mc F)\rightarrow A^{p_1-1,p_2-1}(M/\mc F),\,\,
		\Lambda:=\star^{-1}L\star$$
		with respect to the operator $\star$ (\ref{star}).
		One may observe that the equalities (\ref{adjoint-d}) and (\ref{adjoint-pb})  are not so concise as those in the Hermitian manifold case. Generally, the forms $\zeta_i\,  (i=1,2)$ are exactly nonzero, see \cite{Car84} and \cite[Example 2.3 in Appendix B]{Mo88} for example. Sometimes to make things simpler, one may need the  homologically orientability condition, see Remark \ref{homologic1} for more details.
		
	\end{remark}
	
	\begin{remark}\label{homologic1}
		The homologically orientability or taut (cf. Definition \ref{homolo} and Remark \ref{taut}) hypothesis will enable one to  define an inner  product on $A^p(M/\mc F)$ without using the basic manifold $W$ introduced in Construction \ref{CONSTRUCTION}.  As in the classical case, one can define the Hodge star operator
		$$\star_\mathrm b:A^*(M/\mc F)\longrightarrow A^{s-*}(M/\mc F)$$
		in the following way. Let $U$ be an open set on which the foliation is trivial. Let $\theta_1,\ldots,\theta_s$ be real $1$-forms such that $\left(\theta_1,\ldots,\theta_s\right) $ is an orthonormal basis of the free module $\Omega^1(U/\mc F)$. Then define $\star_\mathrm b$ by
		$$\star_\mathrm b\left(\theta_{i_1}\wedge\cdots\wedge\theta_{i_p}\right)=\epsilon\theta_{j_1}\wedge\cdots\wedge\theta_{j_{s-p}},$$
		where $\left\{j_1,\ldots,j_{s-p}\right\}$ is the increasing complementary sequence of $\left\{i_1,\ldots,i_p\right\}$ in the set $\left\{1,\ldots,s\right\}$ and $\epsilon$ is the signature of the permutation $\left\{i_1,\ldots,i_p,j_1,\ldots,j_{s-p}\right\}.$ 
        
        A straightforward calculation shows that $\star_\mathrm b$ satisfies the identity $\star_\mathrm b\star_\mathrm b=(-1)^{p(s-p)}\text{id}.$
		For $\alpha,\beta\in A^p(M/\mc F)$, we define a Hermitian product by
		\begin{equation}\label{homologically inner product }
			\langle\alpha,\beta\rangle=\int_M\alpha\wedge{\star_\mathrm b\beta}\wedge\chi.
		\end{equation}
		Then it is easy to see that the adjoint operator $\delta$ of $d$ with respect to (\ref{homologically inner product })
		satisfies
		$$\delta={(-1)}^{s(p-1)-1}\star_\mathrm b d\star_\mathrm b.$$
		The construction can be repeated for complex valued basic forms on transversely Hermitian foliations, and similarly, one can obtain the 	analogous clean inequalities for $\partial$, $\bp$ and their adjoint operators.
	\end{remark}

	\begin{remark} \label{transformation} Definition \ref{basic differential operator} and Construction \ref{CONSTRUCTION} are the special cases of the results in \cite{EKA90}. Actually,  for a   compact manifold  endowed with a Riemannian foliation $\mc F$,  El Kacimi Alaoui considered a differential operator $D$ of order $l$ on $C^{\infty}(E / \mc F)$ and can define the scalar product on it, where $E$ is a Hermitian $\mc F$-bundle and $C^{\infty}(E / \mc F)$ denotes the space of basic sections of $E$ (see these concepts in Subsection \ref{subsection transversely }).
		
		El Kacimi Alaoui showed that  if a basic differential operator $D$ is (strongly) transversely elliptic, then so is its adjoint operator. Furthermore, a useful fact says that  the study of a (strongly) transversely elliptic basic differential operator is equivalent to that of  a  (strongly) elliptic differential operator in the usual sense of the same order on the basic manifold $W$ acting on the useful bundle of the Hermitian $\mc F$-bundle $E$ under the action of a compact Lie group, cf. \cite[Propositions 2.7.7, 2.7.8]{EKA90}.
	\end{remark}

	\subsection{Basic Bott--Chern cohomology and mild $\p\bp$-property for foliations}\label{subsection mild}
	We start this subsection by stating some results mainly from \cite{EKA90,EKAG97,Ra17} to be used later, and then extend several notions, lemmata  as aforementioned to the foliated case.
	
	One can define the  \textit{basic Bott--Chern cohomology} of $\mc F$:
	$$H^{\bullet,\bullet}_{\mathrm {BC}}(M/\mc F):=\frac{\ker \p\cap \ker\db}{\im\ \p\db}$$
	with the help of the basic Dolbeault double complex, see (\ref{partial and barpartial}). Also, by virtue of Construction \ref{CONSTRUCTION} and Remark \ref{ADJOINT} we can define the \textit{basic Bott--Chern Laplacian}\beq\label{basic-bc-Lap}
	\square^\mathrm b_{{\mathrm {BC}}}:=\p\db\db^*\p^*+\db^*\p^*\p\db+\db^*\p\p^*\db+\p^*\db\db^*\p+\db^*\db+\p^*\p,
	\eeq and $\G^\mathrm b_{\mathrm {BC}}$ is the associated \textit{basic Green's operator}.\footnote{The basic Aeppli case can be defined similar.}  We then have the \textit{basic Hodge decomposition} of $\square^\mathrm b_{\mathrm {BC}}$ on $(M,\mc F)$:
	\begin{equation}\label{bc-hod-foliation} 	A^{p,q}(M/\mc F)=\ker\square^\mathrm b_{\mathrm {BC}}\oplus \textrm{im}~(\p\db)
		\oplus(\textrm{im}~\p^*+\textrm{im}~\db^*),
	\end{equation}
	whose three parts are orthogonal to each other with respect to the
	$L^2$-scalar product (existing on the completion of $A^{p,q}(M/\mc F)$) defined by transversely Hermitian metric, combined with the equality
	$$\label{identity foliation}
	\1=\mathbb{H}^\mathrm b_{\mathrm {BC}}+\square_{\mathrm {BC}}^\mathrm b\G_{\mathrm {BC}}^\mathrm b=\mathbb{H}^\mathrm b_{\mathrm {BC}}+\G^\mathrm b_{\mathrm {BC}}\square^\mathrm b_{\mathrm {BC}},
	$$
	where $\mathbb{H}^\mathrm b_{\mathrm {BC}}$ is the \textit{basic harmonic projection operator}.
	
	\begin{remark}\label{decompppppp}
		In the case when $\mathcal F$ is transversely Hermitian, El Kacimi Alaoui established a Hodge--Kodaira decomposition for the  basic Dolbeault complex, so our (\ref{bc-hod-foliation}) is an application of \cite[Theorem 3.3.3]{EKA90}.
	\end{remark}

	We have the following important properties.
	\begin{proposition}[{\cite[Proposition  6.2]{EKAG97}, \cite[Proposition 3.1]{Ra17}}]
		
		$\square ^\mathrm b_{\bp}$, $\square^\mathrm b_{\mathrm{A}}$ and $\square^\mathrm b_{\mathrm {BC}}$ are all self-adjoint and transversely elliptic.
	\end{proposition}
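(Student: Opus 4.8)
The plan is to treat the two assertions — self-adjointness and transverse ellipticity — separately, the former being formal and the latter reducing to a pointwise symbol computation that does not see the foliation.

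\emph{Self-adjointness.} First I would recall from Remark~\ref{ADJOINT} that $\p^*$ and $\bp^*$ are by definition the formal adjoints of $\p$ and $\bp$ for the scalar product built in Construction~\ref{CONSTRUCTION}; hence on $A^{\bullet,\bullet}(M/\mc F)$ one has $(\p)^*=\p^*$, $(\p^*)^*=\p$, and likewise for $\bp$. Since the adjoint of a composition $ABCD$ is $D^*C^*B^*A^*$, each of the six summands in $\square^b_{BC}$ of \eqref{basic-bc-Lap} is individually self-adjoint — for instance $(\p\db\db^*\p^*)^*=(\p^*)^*(\db^*)^*(\db)^*(\p)^*=\p\db\db^*\p^*$ — so $\square^b_{BC}$ is self-adjoint; the identical bookkeeping handles $\square^b_A$, and $\square^b_{\bp}=\bp\bp^*+\bp^*\bp$ is manifestly self-adjoint. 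This step is routine.

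\emph{Transverse ellipticity.} Here I would work with principal symbols in the sense of Definition~\ref{basic differential operator}: it suffices to check that $\sigma(D)(z,\xi)$ is an isomorphism of $\bigwedge^\bullet(\mathcal{N}^*\mathcal{F}^{\mathbb{C}})_z$ for every $z\in M$ and every nonzero basic covector $\xi$. In an adapted chart $\p$ and $\bp$ act only in the transverse holomorphic, respectively antiholomorphic, directions and have exactly the local form of the usual Dolbeault operators on $\mathbb{C}^r$, while the transversely Hermitian metric induces the fibrewise Hermitian metric entering the scalar product. Decomposing $\xi=\xi'+\xi''$ into its $(1,0)$- and $(0,1)$-parts, this gives $\sigma(\p)(z,\xi)=\sqrt{-1}\,\xi'\w\,\cdot$, $\sigma(\bp)(z,\xi)=\sqrt{-1}\,\xi''\w\,\cdot$, while $\sigma(\p^*)(z,\xi)$ and $\sigma(\bp^*)(z,\xi)$ are the fibrewise Hermitian adjoints of these — up to $\pm\sqrt{-1}$, contraction with the metric duals of $\xi''$ and $\xi'$. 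Symbols of compositions multiply, so from the identity $\eta\w\iota_{\eta^\sharp}+\iota_{\eta^\sharp}(\eta\w\,\cdot)=\|\eta\|^2\,\mathrm{id}$ one gets $\sigma(\square^b_{\bp})(z,\xi)=c_0\|\xi\|^2\,\mathrm{id}$ with $c_0\neq 0$, so $\square^b_{\bp}$ is transversely elliptic. For the fourth-order operator $\square^b_{BC}$ only its four fourth-order terms contribute to the top-order symbol, and their product symbols — assembled from $\xi'\w$, $\xi''\w$ and the two conjugate contractions — add up, by the same exterior-algebra manipulations as in the classical Hermitian case, to $c\|\xi\|^4\,\mathrm{id}$ with $c\neq 0$; equivalently, this is verbatim the principal-symbol computation for the classical fourth-order Bott--Chern Laplacian on a Hermitian manifold of complex dimension $r$, which is well known to be elliptic. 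The same computation with $\p$ and $\bp$ interchanged, or conjugation duality between $\square^b_{BC}$ and $\square^b_A$, disposes of $\square^b_A$.

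\emph{Main obstacle.} The only point carrying genuine content is the fourth-order symbol identity for $\square^b_{BC}$ (and hence $\square^b_A$): that the sum of the four symbol products is a nonzero scalar multiple of $\|\xi\|^4\,\mathrm{id}$ on each $\bigwedge^{p,q}$. I expect this to be the place one has to be careful; but since it is a pointwise statement in exterior linear algebra that is wholly insensitive to the ambient foliation, the cleanest route is to quote it from the complex-manifold case, observing that at the level of principal symbols $\square^b_{BC}$ is literally the transverse lift of the classical Bott--Chern Laplacian, so no new computation is required. No analytic input beyond Construction~\ref{CONSTRUCTION} and Definition~\ref{basic differential operator} enters.
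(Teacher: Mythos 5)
Your proposal is correct, and in fact the paper contains no proof of this proposition at all: it is quoted from \cite[Proposition 6.2]{EKAG97} and \cite[Proposition 3.1]{Ra17}, and the argument in those references is essentially the one you give — formal self-adjointness directly from the definition of $\p^*,\bp^*$ as adjoints for the scalar product of Construction \ref{CONSTRUCTION}, and transverse ellipticity by a principal-symbol computation which, since the mean-curvature correction terms in $\p^*,\bp^*$ from Remark \ref{ADJOINT} are of order zero, is verbatim the classical Hermitian computation carried out in the transverse directions. The one step you flag as the ``main obstacle'' can be closed in one line rather than quoted: with $e'=\xi^{1,0}\wedge\,\cdot$, $e''=\xi^{0,1}\wedge\,\cdot$ and $i'=(e')^{*}$, $i''=(e'')^{*}$, the relations $e'i'+i'e'=\|\xi^{1,0}\|^{2}\,\mathrm{id}$, $e''i''+i''e''=\|\xi^{0,1}\|^{2}\,\mathrm{id}$, together with the fact that all mixed pairs anticommute (forms of types $(1,0)$ and $(0,1)$ are orthogonal), give for the four fourth-order terms of $\square^b_{BC}$
\begin{equation*}
e'e''i''i'+i''i'e'e''+i''e'i'e''+i'e''i''e'
=(e''i''+i''e'')(e'i'+i'e')
=\|\xi^{1,0}\|^{2}\,\|\xi^{0,1}\|^{2}\,\mathrm{id},
\end{equation*}
which equals $\tfrac14\|\xi\|^{4}\,\mathrm{id}$ at every real nonzero basic covector $\xi$ (the only covectors Definition \ref{basic differential operator} requires); the case of $\square^b_A$ follows by the same identity with the roles of $\p$ and $\bp$ exchanged. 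Two minor caveats: ``self-adjoint'' here means formally self-adjoint on $A^{\bullet,\bullet}(M/\mc F)$ with respect to that scalar product, which is exactly what your adjoint bookkeeping shows; and the symbol is \emph{not} a scalar multiple of $\|\xi\|^{4}$ at complex covectors (it vanishes when $\xi^{0,1}=0$), so the restriction to real $\xi$ in your statement should be kept explicit.
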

	\begin{example}
		According to (\ref{!canonical isomophism 2}) and Remark \ref{transformation}, we have the following commutative diagram:
		
		\begin{tikzcd}
			&&&&A^{p,q}(M/\mc F) \ar[r,"\square^\mathrm b_{\mathrm {BC}}"]  \ar[d,"\mathscr {B}"] &A^{p,q}(M/\mc F)\ar[d,"\mathscr {B}"] \\
			&&&&C^{\infty}_{SO(s)}(\widetilde{N_{p,q}})\ar[r,"\widetilde{\square^\mathrm b_{\mathrm {BC}}}"]&C^{\infty}_{SO(s)}(\widetilde{N_{p,q}}),
		\end{tikzcd}
		
		where $\widetilde{\square^\mathrm b_{\mathrm {BC}}}$ is an ordinary elliptic operator induced by the basic Bott--Chern Laplacian.
	\end{example}

	Recall that a foliation satisfies the \emph{$\p\bp$-property} if  for pure-type basic forms,
	\begin{equation}\label{foliation ddbar}
		\ker \partial \cap\textrm{im}\ \bp=\ker \bp \cap\textrm{im}\  \p =\textrm{im}\ \p\bp.
	\end{equation}
	This property is thoroughly studied in the foliated case in \cite{Ra17,Ra21}. Hence, we can  extend the `weak' $\partial\bp$-properties as mentioned before to the  foliation version  without any difficulties.  For example:
	
	\begin{definition}\label{MILD}
		The foliation satisfies the \textit{$(p,q)$-th mild $\p\b{\p}$-property} or \textit{$\mathbb B^{p,q}$} if for any  $(p-1,q)$-basic form $\xi$ with $\p\b{\p}\xi=0$ on $(M,\mc F)$, there exists a $(p-1,q-1)$-basic form $\theta$ such that $\p\b{\p} \theta=\p\xi$.
	\end{definition}

By analogy with Lemma \ref{ddbar-eq}, the $\partial \bar{\partial}$-equation on a transversely Hermitian foliation admits a minimal $L^2$-norm basic solution given explicitly via the basic Green operators. Using this, and inspired by \cite[Observation 2.11]{RWZ21}, we have:

\begin{lemma}\label{ss}
Suppose $\zeta$ and $\xi$ are $(p+1,q-1)$- and $(q+1,p-1)$-basic forms on a transversely Hermitian foliation, respectively, both $\partial \bar{\partial}$-closed.
	Then the system
	\begin{equation}\label{conjugatequ-1}
		\begin{cases}
			\partial x = \bar{\partial} \zeta, \\
			\bar{\partial} x = \partial \overline{\xi},
		\end{cases}
	\end{equation}
	admits a canonical $(p,q)$-basic form solution given by
	\[
	x = \bar{\partial}(\partial \bar{\partial})^* \mathbb{G}^\mathrm{b}_{\mathrm{BC}} \bar{\partial} \zeta - \partial(\partial \bar{\partial})^* \mathbb{G}^\mathrm{b}_{\mathrm{BC}} \partial \overline{\xi}.
	\]
\end{lemma}

	\subsection{$\partial\bp$-property for foliations without homologically orientability}\label{nonHO}
	L. A. Cordero--R. A. Wolak \cite[Lemma 1 in Section 4]{CW91} showed that  a homologically orientable transversely K\"ahler foliation on a compact manifold  satisfies the $\p\bp$-property \eqref{foliation ddbar},  cf. also \cite[Theorem 7.1]{Ra17}. In this subsection, we aim to show that the homologically orientability assumption is indeed unnecessary. This result may be well-known to experts; nevertheless, we provide a proof here for the convenience of the readers.   This observation will help us  remove the $(1,2)$-th mild $\p\bp$-property assumption  in Theorem \ref{FOLiation Pkahler} when $p=1$ (which is Corollary \ref{1FOLiation 1kahler}), see also Remark \ref{Non} (\ref{BBB}).

	We first establish the following transverse K\"ahler identities, which eliminate the need for the homologically orientability assumption in \cite[Lemma 3.4.4, Proposition 3.4.5]{EKA90}.
	\begin{lemma}\label{TranKahiden}
		Let  $\mc F$ be a transversely K\"ahler foliation (which is not necessarily homologically orientable) with a transverse K\"ahler form $\omega$ on a compact manifold. Then following the notations in Remark \ref{ADJOINT}, the  identities below hold true:
		\begin{enumerate}
			\item \label{1}$[\bp,L]=[\partial, L]=0\text{\,\, and \,\,}[\bp^*,\Lambda]=[\p^*,\Lambda]=0.$
			\item \label{2}$[\bp^*,L]=\sqrt{-1}\p,\,\, [\p^*,L]=-\sqrt{-1}\,\bp\,\,\text{ and }\,\,
			[\Lambda,\bp]=-\sqrt{-1}\p^*,\,\,[\Lambda,\partial]=\sqrt{-1}\,\bp^*.$
			\item \label{3}$\partial\bp^*+\bp^*\p=0.$
			\item \label{4} $\square^\mathrm{b}_\p=\square^\mathrm{b}_{\bp}=\frac{1}{2}\square^{\mathrm b}_d \,\,$ and $\,\,\square_d^\mathrm{b}$ commutes with $\star,\p,\bp,\p^*,{\bp}^*, L,$ and $\Lambda$.
		\end{enumerate}
	\end{lemma}
	\begin{proof}
		Keeping with the same spirit as in the classical setting (cf. e.g. \cite[p. 111]{GH78} or \cite[Proposition 3.1.12]{Hb05}), we will only give  a sketch here.
		
		Let us first prove (\ref{1}). The first assertion in (\ref{1}) holds thanks to the $d$-closedness of the transverse K\"ahler form $\omega$.  The second assertion in (\ref{1}) follows from the first one and (\ref{2id}): for a complex valued $p$-basic form $\beta$ we have
		$$	\begin{aligned}
			[{\bp}^*,\Lambda](\beta) &= (-1)^{s(p+1)+1} {\star} (\bp+\zeta_2\wedge)\star\star^{-1}L\star\beta - \star^{-1}L\star(-1)^{s(p+1)+1} {\star} (\bp+\zeta_2\wedge)\star\beta \\
			&= (-1)^{s(p+1)+1}\star([\bp,L]+[\zeta_2\wedge,L])\star\beta= 0\\
		\end{aligned}$$
	and	similarly for $[\p^*,\Lambda]=0$.
		
		(\ref{2}) comes from (\ref{1}) plus the transverse Lefschetz decomposition theorem for basic forms (the proof is almost the same as the one on an Hermitian manifold). That is, let $\mc F$ be a transversely Hermitian foliation, then the following decomposition holds:
		$$A_\mb C^k(M/\mc F)=\bigoplus_{i\geq 0} L^i(P^{k-2i}(M/\mc F)) $$
		where $P^{k-2i}(M/\mc F)=\ker\, (\Lambda: A^{k-2i}_\mb C(M/\mc F)\rightarrow A^{k-2i-2}_\mb C(M/\mc F))$ and an element $\alpha\in P^{k-2i}(M/\mc F)$ is called \textit{primitive}.
		
		One can derive (\ref{3}) and (\ref{4}) via (\ref{1}) and (\ref{2}) and the proofs are trivial.
		For instance,
		$$\begin{aligned}
			\sqrt{-1}(\partial\bp^*+\bp^*\p)&=\p[\Lambda,\p]+[\Lambda,\p]\p\\
			&=\p\Lambda\p-\p\Lambda\p=0.
		\end{aligned}
		$$
		
		The proof is thus completed.
	\end{proof}
	
	With Lemma \ref{TranKahiden} and the decomposition of the basic Dolbeault cohomology, which also doesn't require the homologically orientability assumption (see \cite[Theorem 3.3.3]{EKA90} and Remark \ref{decompppppp}), in hand, we now can achieve our goal in this subsection. The idea of the proof is the same as that of  \cite[Corollary 3.2.10]{Hb05} for example so we will omit the proof.
	\begin{theorem}\label{ddbarnonho}
		With the same settings as in Lemma \ref{TranKahiden}, $\mc F$ satisfies the $\p\bp$-property.
	\end{theorem}

	\begin{remark}
		Similarly, one can show that transverse K\"ahler foliations on compact manifolds, even without the homologically orientability assumption, can possess some  properties as K\"ahler structures on compact manifolds, including the Hodge decomposition, Hard Lefschetz decomposition, etc. However, it's important to note that the assumption in question is still required for certain duality-type theorems due to the existence of the forms $\zeta_i,(i=1,2)$ (\ref{adjoint-pb}). Examples of such theorems include the transverse Serre duality and the duality theorem for basic Bott--Chern and Aeppli cohomology (cf. \cite[Corollary 3.1]{Ra17}).
	\end{remark}
	
	\begin{remark}
		Noteworthy to mention that the converse of Theorem \ref{ddbarnonho} isn't always true. Cordero--Wolak \cite[Section 2.3]{CW90} constructed a transversely symplectic but not transversely K\"ahler foliation on a compact non-complex nilmanifold. Recently, Ra\'zny \cite[6.3]{Ra17} has shown that this foliation satisfies the $\p\bp$-property.
	\end{remark}
	
	\subsection{Deformation theory of foliations}\label{subsection foliation defor}
	In this subsection, we will give some preliminaries on the deformation theory of foliations following \cite[Section 4]{EKAG97}.

	We are going to recall the definition of a deformation of a foliation in a general way. Let $M$ be a smooth manifold of dimension $m$. For each $x \in M$, let $G_x(M, s)$ be the Grassmannian manifold of $s$-planes in $T_{x} M$. Then
	$$
	\mathcal{G}(M,s):=\bigcup_{x \in M} G_x(M,s)
	$$
	can be given a structure of a differentiable manifold such that the canonical projection $(x, \tau) \in \mathcal{G}(M,s) \rightarrow x \in M$ is a locally trivial fibration, whose fiber is the Grassmannian
	$G(m,s)$ of $s$-planes in the space $\mathbb{R}^{m}$. Then a subbundle of rank $s$ of $T M$ is just a section of the bundle $\mathcal{G}(M,s) \rightarrow M$. Denote by $C^{\infty}(\mathcal{G}(M,s))$ the space of sections of this bundle.
	
Let $\tau \in C^{\infty}(\mathcal{G}(M,s))$. By Frobenius theorem, $\tau$ is tangent to the foliation if and only if, for any pair $(U, V)$ of (global) sections of $\tau$, the Lie bracket $[U, V]$ is also a section of $\tau$. Let $\left(X_{1}, \ldots, X_{s}\right)$ be a local basis of $\tau$. Then
	$$
	U=\sum_{i=1}^{s} a^{i} X_{i} \quad \text { and } \quad V=\sum_{j=1}^{s} b^{j} X_{j}.
	$$
	So the bracket $[U, V]$ can be expressed as
	$$
	[U, V]=\sum_{i, j=1}^{s}\left\{a^{i} b^{j}\left[X_{i}, X_{j}\right]+\left(a^{i} X_{i}\left(b^{j}\right) X_{j}-b^{j} X_{j}\left(a^{i}\right) X_{i}\right)\right\}.
	$$
	So the value of $[U, V]$ in $v \tau=T M / \tau$ at a point $x \in M$ depends only on the value of $U$ and $V$ at $x$. Hence, $Q_{\tau}(U, V)=\pi \left([U,V]\right)$ is a skew-symmetric bilinear map $Q_{\tau}: \tau \times \tau \rightarrow \nu \tau$ where $\pi: T M \rightarrow \nu \tau$ is the canonical projection. In other words, $Q_{\tau}$ is a global section of the vector bundle $\Lambda^{2}(\tau, v \tau)$ of skew-symmetric bilinear forms on the bundle $\tau$. 
    
The integrability condition of $\tau$ is equivalent to $Q_{\tau}$ identically equals to $0$. So we get a map $$Q: C^{\infty}(\mathcal{G}(M,s)) \longrightarrow \mathscr Z,$$ where $\mathscr Z$ is a fiber bundle over $\mathcal{G}(M,s)$ whose fiber over a point $\sigma \in \mathcal{G}(M,s)$ is the infinite-dimensional space $\Omega^{2}(\sigma, v \sigma)$ of global sections of the bundle $\Lambda^{2}(\tau, v \tau)$. The space $$\mathrm{Fol}(M, s)$$ of codimension $s$ foliations on $M$ is exactly the set $\{Q=0\} .$ It will be equipped with the $C^{\infty}$-topology induced by the topology of the Fr\'echet manifold $C^{\infty}(\mathcal{G}(M,s)).$  Let $\mathcal{D}(M)$ be the diffeomorphism group of $M$. Then $\mathcal{D}(M)$ acts on $C^{\infty}(\mathcal{G}(M,s))$ and the action preserves $\mathrm{Fol}(M, s)$. We denote by $\mathcal O_\mathcal F$ the orbit of $\mathcal F\in\mathrm{Fol}(M,s)$.
	
	\begin{definition}
		A \textit{deformation} of $\mathcal F$ parametrized  by an open neighborhood $U$ of 0 in $\mathbb{R}^{d}$ is a continuous map $\rho: U \rightarrow \mathrm{Fol}(M,s), t\mapsto \mathcal F_t$
		such that $\rho(0)=\mc F_0=\mc F$.
	\end{definition}
	\begin{definition}
		A deformation $\rho: U \rightarrow \mathrm{Fol}(M,s), t\mapsto \mathcal F_t$ of $\mc F$ parametrized  by an open neighborhood $U$ of 0 in $\mathbb{R}^{d}$ is called \textit{with fixed differentiable type} if for all $t\in U,\,\rho(t)\in\mathcal O_\mc F$, i.e., there exists a differentiable  family $\left\{h_t\right\}_{t\in U}$ of diffeomorphisms on $M$ such that $h_t^*\mathcal F_t=\mc F$.
	\end{definition}
	
	One has the following nice properties when the initial foliation is transversely holomorphic.
	\begin{proposition}[{\cite[Proposition 4.3]{EKAG97}}]
		Let $\mathcal F_t$ be the deformation of an  $r$-complex codimensional $(s=2r)$ transversely holomorphic foliation $\mathcal F$ with fixed differentiable type.
		\begin{enumerate}[$(i)$]
			\item Suppose that $\mc F$ is Riemannian. Then for any $t\in U$, $\mc F_t$ is also Riemannian.
			\item The family $\left\{\mc F_t\right\}_{t\in U}$ is  lifted to $M^\#$ in a  new family of TP foliations $\left\{\mc F_t^\#\right\}_{t\in U}$ invariant under $SO(s)$ with fixed differentiable type.
			\item Assume that the foliations $\mc F_t$ are transversely holomorphic and $\mc F_0=\mc F$ possesses a transversely Hermitian metric $\Omega_{0}.$ Then $\left\{\mc F_t\right\}_{t\in U}$  are provided with transversely Hermitian metrics $\Omega_t$ varying	differentiably depending on t.
		\end{enumerate}
	\end{proposition}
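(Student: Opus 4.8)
The plan is to establish the three assertions in order, with essentially all of the work concentrated in (ii). Throughout, fix a transverse (bundle-like) metric $g$ witnessing that $\mathcal{F}=\mathcal{F}_0$ is Riemannian, and let $\{h_t\}_{t\in U}$, $h_0=\mathrm{id}$, be the differentiable family of diffeomorphisms of $M$ supplied by the fixed-differentiable-type hypothesis, so that $h_t^{*}\mathcal{F}_t=\mathcal{F}$, i.e. $\mathcal{F}_t=(h_t)_{*}\mathcal{F}$. For \textbf{(i)}: since $h_t$ maps leaves of $\mathcal{F}$ onto leaves of $\mathcal{F}_t$, it induces a correspondence of transverse manifolds conjugating the holonomy pseudogroup $\mathcal{H}$ of $\mathcal{F}$ to that of $\mathcal{F}_t$; pushing $g$ forward by $dh_t$ gives a transverse metric $g_t:=(h_t)_{*}g$ on $\mathcal{N}\mathcal{F}_t$ whose induced metric on the transverse manifold is invariant under the holonomy pseudogroup of $\mathcal{F}_t$ (because the corresponding statement holds for $g$ and $\mathcal{H}$). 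Hence each $\mathcal{F}_t$ is Riemannian, with $g_t$ depending differentiably on $t$ and $g_0=g$.

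For \textbf{(ii)}, use (i) to form, for each $t$, the principal $SO(s)$-bundle $p_t\colon M_t^{\#}\to M$ of $g_t$-orthonormal transverse frames of $\mathcal{F}_t$, carrying the lifted foliation $\mathcal{F}_t^{\#}$ obtained from the transverse Levi-Civita connection; by Molino's structure theory (as recalled in Construction \ref{CONSTRUCTION}) $\mathcal{F}_t^{\#}$ is transversely parallelizable and $SO(s)$-invariant, with $\dim\mathcal{F}_t^{\#}=\dim\mathcal{F}$. Because $T\mathcal{F}_t$ varies differentiably as a subbundle of $TM$ and the Grassmann bundle $\mathcal{G}(M,s)$ is locally trivial, the Euclidean bundles $(\mathcal{N}\mathcal{F}_t,g_t)$ form a differentiable family; hence the $M_t^{\#}$ are the fibres of a locally trivial fibration over $U$ and may be identified with a single manifold $M^{\#}=M_0^{\#}$. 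This turns $\{\mathcal{F}_t^{\#}\}$ into a differentiable family of $SO(s)$-invariant TP foliations on $M^{\#}$; moreover $dh_t$, being a transverse isometry $(\mathcal{N}\mathcal{F},g)\to(\mathcal{N}\mathcal{F}_t,g_t)$, lifts to an $SO(s)$-equivariant diffeomorphism conjugating $\mathcal{F}^{\#}$ to $\mathcal{F}_t^{\#}$, so the family has fixed differentiable type.

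For \textbf{(iii)}, assume in addition that each $\mathcal{F}_t$ is transversely holomorphic, with holonomy-invariant complex structure $J_t$ on $\mathcal{N}\mathcal{F}_t$ varying differentiably, and choose $g$ above to be the Riemannian metric underlying the given transversely Hermitian metric $\Omega_0$ on $\mathcal{F}_0$, so that $J_0$ is $g$-compatible. Put
$$\widetilde g_t(X,Y):=\tfrac{1}{2}\bigl(g_t(X,Y)+g_t(J_tX,J_tY)\bigr),\qquad \Omega_t(X,Y):=\widetilde g_t(J_tX,Y).$$
Then $\widetilde g_t$ is $J_t$-compatible, $\Omega_t$ is a positive basic $(1,1)$-form, and both are invariant under the holonomy pseudogroup of $\mathcal{F}_t$ since $g_t$ and $J_t$ are; hence $\Omega_t$ is a transversely Hermitian metric for $\mathcal{F}_t$, it depends differentiably on $t$, and the $J_0$-compatibility of $g$ gives $\widetilde g_0=g$, so $\Omega_t|_{t=0}=\Omega_0$.

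The routine parts are the pseudogroup bookkeeping in (i) and the pointwise linear algebra in (iii); the real obstacle is the family version of Molino's frame-bundle construction in (ii) — one must perform the passage to the transverse orthonormal frame bundle and the horizontal lift of the foliation simultaneously for all $t\in U$, so as to obtain a genuinely differentiable family of foliations on one fixed ambient manifold $M^{\#}$ together with a compatible lift of the $h_t$. I expect this to go through because every ingredient (the bundle $\mathcal{N}\mathcal{F}_t$, the metric $g_t$, the orthonormal frame bundle, the transverse Levi-Civita connection, and its horizontal distribution) is natural in the parameter $t$; the only delicate point is the differentiable trivialization over $U$, which is furnished by the local triviality of $\mathcal{G}(M,s)$ together with the differentiable dependence of $T\mathcal{F}_t$ on $t$.
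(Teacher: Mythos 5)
Your proof is correct, and it follows the natural route: transport the transverse metric by the diffeomorphisms $h_t$ furnished by the fixed-differentiable-type hypothesis to get $g_t=(h_t)_*g$, perform Molino's transverse orthonormal frame-bundle lift with the identification $M_t^{\#}\cong M_0^{\#}$ given by the $SO(s)$-equivariant lift of $dh_t$ (which also conjugates $\mathcal{F}^{\#}$ to $\mathcal{F}_t^{\#}$, giving fixed differentiable type upstairs), and average $g_t$ over $J_t$ to produce the transversely Hermitian forms $\Omega_t$ with $\Omega_0$ recovered at $t=0$. Note that the paper itself gives no proof of this statement—it is quoted directly from \cite{EKAG97}—so there is no in-paper argument to compare with; your write-up is the standard one, the only implicit point being (as you flagged) that the differentiable dependence of the transverse complex structures $J_t$ on $t$, needed in part $(iii)$, is part of the deformation data in this setting rather than a consequence of mere continuity of $t\mapsto\mathcal{F}_t$.
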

	\subsection{Proof of Theorem \ref{FOLiation Pkahler}}\label{Foliation proofproof}
	We split the proof of  Theorem \ref{FOLiation Pkahler} in four steps.
	\setcounter{step}{0}
	\begin{step}\label{foliation step 1}
		{\rm\textbf{Integrable Beltrami differentials, Kuranishi family: foliated version.}}\end{step}
	
	We first briefly introduce the main results obtained in \cite{EKAN89}, which is of great significance in our proof.
	
	With the same setting as in  Theorem \ref{FOLiation Pkahler}, El Kacimi Alaoui--M. Nicolau proved the existence of \emph{versal space} (also called \emph{Kuranishi space}) for deformations mimicking the construction of the Kuranishi space presented in \cite{Ku64}, and the universal property of the versal space was strengthened in \cite{Gb92}. As a result, one can get a family, denoted by $\phi(t)\in A^{0,1}(M/\mc F,\mc N\mc F^{1,0})$ for the Kuranishi family, satisfying the equation
	$$
	\bp\phi(t)=\frac{1}{2}\left[\phi(t),\phi(t)\right]
	$$
	on some analytic set and determining the transversely holomorphic structure on $(M,\mc F_t)$.
	Hence, this construction and $\phi(t)$ will play the same roles as the Kuranishi's completeness theorem (introduced in Section \ref{subsection-Kuranishi family})  and the  integrable Beltrami differentials $\v(t)$, respectively.  
    
Let $z^1,\cdots,z^r$ and $\zeta^1,\cdots,\zeta^r$ be the transverse holomorphic coordinates on $\mc F_0$ and $\mc F_t$, respectively. Then  $\phi(t)$ can be defined by
	
	\begin{equation}\label{kappa local}
		\phi(t):=\lk \lk \frac{\p \zeta}{\p z} \rk^{\!\!\!-1} \lk \frac{\p \zeta}{\p \bar{z}} \rk
		\rk^i_{\overline{j}}d\bar z^j\otimes\frac{\p}{\p z^i}.
	\end{equation}
	It will be often written as $\phi$ briefly.
	Also, we can define the extension map $$\label{RZ foliation}
	e^{\iota_{\phi}|\iota_{\overline{\phi}}}:
	A^{p,q}(M/\mc F_0)\longrightarrow A^{p,q}(M/\mc F_t),
	$$ analogous to (\ref{explicit}) which can preserve all $(p,q)$-basic forms.

	\begin{step}\label{foliation step 2}
		{\rm\textbf{Transverse  positive $(p,p)$-basic forms: deformation openness.}}\end{step}
	
	To prove Theorem \ref{FOLiation Pkahler}, we  need the following key observation, which will be the  deformation openness property of the transverse  positivity for $(p,p)$-basic forms. Here we follow the idea as that of \cite[the second proof of Theorem 4.9]{RWZ21}. %We  will use the notation $(M, \mc F_t)$ instead of $(M,\mc F,J_t)$ for convenience. %For simplicity, set $d=1$.
	Note that to make the proposition holds, the new extension map $e^{\iota_{\phi}|\iota_{\overline{\phi}}}$ is a key player. We will use its two properties in the following  proof:  one is that it depends smoothly on $t$; the other one is that  it can send each kind of  (strictly) positive  $(p,p)$-basic forms on $(M,\mathcal F_0)$ bijectively onto the corresponding one on $(M,\mc F_t)$, which can be showed by an argument similar to the one in \cite[Proposition 4.11]{RWZ19}.
	\begin{proposition}
		Let $\left\{\mc F_t\right\}_{t\in U}$ be a smooth family of transversely Hermitian structures on a compact  foliated manifold $(M,\mc F)$ with fixed differentiable type, and $\Omega_t$ a family of real $(p,p)$-basic forms, depending smoothly on $t$.  Assume that $\Omega_0$ is a  transverse positive $(p,p)$-basic form on $(M,\mc F_0)$. Then $\Omega_t$ is also  transverse positive on $(M,\mc F_t)$ for sufficiently small $t$.
	\end{proposition}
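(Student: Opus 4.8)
The plan is to reduce the transverse positivity of $\Omega_t$ on $(M,\mc F_t)$ to the transverse positivity of an auxiliary $(p,p)$-basic form on $(M,\mc F_0)$ that varies continuously in $t$, and then invoke a compactness argument. First I would recall from Step \ref{foliation step 1} that the new extension map $e^{\iota_{\phi}|\iota_{\overline{\phi}}}\colon A^{p,p}(M/\mc F_0)\longrightarrow A^{p,p}(M/\mc F_t)$ is a bijection which depends smoothly on $t$ (with $e^{\iota_0|\iota_0}=\mathrm{id}$), and which carries each class of (strictly) positive $(p,p)$-basic forms on $(M,\mc F_0)$ onto the corresponding class on $(M,\mc F_t)$. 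Its inverse therefore pulls $\Omega_t$ back to a $(p,p)$-basic form on $(M,\mc F_0)$; set $\widehat{\Omega}_t:=\bigl(e^{\iota_{\phi}|\iota_{\overline{\phi}}}\bigr)^{-1}(\Omega_t)$. Because $\phi(t)$ depends smoothly on $t$ with $\phi(0)=0$, and because $\Omega_t$ depends smoothly on $t$ with $\Omega_0$ the given transverse positive form, the family $\widehat{\Omega}_t$ depends continuously (indeed smoothly) on $t$ and $\widehat{\Omega}_0=\Omega_0$.

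Next I would reduce transverse positivity to a pointwise condition on a compact set. By the definition of transverse positivity (Definition \ref{TRAN P-Kahler}), $\Omega_t$ is transverse positive on $(M,\mc F_t)$ precisely when, for every point of $M$ and every tuple of nonzero $(1,0)$-basic forms $\tau_1,\dots,\tau_{r-p}$ for $\mc F_t$, the $(r,r)$-basic volume form $\Omega_t\wedge\sqrt{-1}\tau_1\wedge\overline{\tau_1}\wedge\cdots\wedge\sqrt{-1}\tau_{r-p}\wedge\overline{\tau_{r-p}}$ is strictly positive. Since the extension map $e^{\iota_{\phi}|\iota_{\overline{\phi}}}$ intertwines the transversely holomorphic structures on $\mc F_0$ and $\mc F_t$ and preserves the cone of strongly positive decomposable pieces, testing transverse positivity of $\Omega_t$ against decomposable $(r-p,r-p)$-forms for $\mc F_t$ is equivalent to testing transverse positivity of $\widehat{\Omega}_t$ against the corresponding decomposable forms for $\mc F_0$. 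Thus it suffices to show that $\widehat{\Omega}_t$ is transverse positive on $(M,\mc F_0)$ for small $t$, and here $\mc F_0$ is a \emph{fixed} transversely holomorphic foliation, so the test set of decomposable forms is a fixed bundle over $M$.

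The heart of the argument is then a standard compactness/continuity step. Consider the unit sphere bundle over $M$ (in the fixed transverse Hermitian metric on $(M,\mc F_0)$) of normalized tuples $(\tau_1,\dots,\tau_{r-p})$ of decomposable data; this is a compact space because $M$ is compact. The function $(x,\tau)\mapsto$ (the coefficient of $\widehat{\Omega}_0\wedge\sqrt{-1}\tau_1\wedge\overline{\tau_1}\wedge\cdots$ against the transverse volume form) is continuous and, by the hypothesis that $\Omega_0=\widehat{\Omega}_0$ is transverse positive, strictly positive everywhere on this compact set, hence bounded below by some $\varepsilon>0$. Since $\widehat{\Omega}_t\to\widehat{\Omega}_0$ in $C^0$ as $t\to 0$ (uniformly on $M$), the analogous function for $\widehat{\Omega}_t$ stays $\geq\varepsilon/2>0$ on the same compact set for $t$ in a small neighborhood of $0$; therefore $\widehat{\Omega}_t$ is transverse positive on $(M,\mc F_0)$, and pushing forward by $e^{\iota_{\phi}|\iota_{\overline{\phi}}}$ shows $\Omega_t$ is transverse positive on $(M,\mc F_t)$.

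The main obstacle I anticipate is not the compactness step, which is routine once set up, but justifying cleanly that the pullback $\widehat{\Omega}_t$ remains a genuine $(p,p)$-basic form for $\mc F_0$ with $C^0$-dependence on $t$, and that $e^{\iota_{\phi}|\iota_{\overline{\phi}}}$ really matches the two notions of decomposable test forms — i.e., that it sends decomposable $(1,0)$-basic forms for $\mc F_0$ to decomposable $(1,0)$-basic forms for $\mc F_t$ and preserves the associated transverse volume forms up to a positive factor. This is the content alluded to by the reference to the argument of \cite[Proposition 4.11]{RWZ21}; making the foliated version precise requires checking that the extension map acts on the $(1,0)$-basic conormal bundle $\mc N^*\mc F_0^{1,0}$ by the explicit local formula analogous to \eqref{explicit} and is a fiberwise linear isomorphism onto $\mc N^*\mc F_t^{1,0}$, together with tracking how the transverse volume element transforms under this isomorphism. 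Once that bookkeeping is in place, the rest follows as sketched.
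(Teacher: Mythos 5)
Your proposal is correct and takes essentially the same approach as the paper: both arguments reduce the statement to a continuity-plus-compactness argument over the compact family of decomposable transverse test data, relying on exactly the two properties of the extension map you isolate (smooth dependence on $t$ and preservation of the positivity cones, as in the foliated analogue of \cite[Proposition 4.11]{RWZ21}). The only difference is bookkeeping: you pull $\Omega_t$ back to $(M,\mc F_0)$ via $\bigl(e^{\iota_{\phi}|\iota_{\overline{\phi}}}\bigr)^{-1}$ and test against fixed decomposables for $\mc F_0$, whereas the paper keeps $\Omega_t$ on $(M,\mc F_t)$, pushes the decomposable $(q,0)$-basic test forms forward by $e^{\iota_{\phi}}$, and runs the same compactness argument pointwise over $Y_x$ together with a finite cover of $M$.
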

	\begin{proof}
		Let $\Omega_0$ be a transverse positive $(p,p)$-basic form on $(M,\mc F_0)$ and $\Omega_t$ its real extension on $(M,\mc F_t)$.
		It is sufficient to show that: at any given point $x\in M$, there exists a uniform small constant $\epsilon$ such that for any $t\in U_\epsilon$, where $U_{\epsilon}=\{t \in \mathbb{R}^d\ \big |\ |t|<\epsilon \}$,  and any nonzero decomposable  $\tau\in\bigwedge^{q,0}(\mc N^*\mc F_0)_x$ with $p+q=r$, we have
		\begin{equation}\label{pkahler foliation transverse}
			\Omega_{t}(x) \wedge \sigma_{q}e^{{\iota_{\phi}}}(\tau) \wedge e^{\iota_{\bar\phi}} (\bar\tau)>0.
		\end{equation}

		Indeed, let $\omega_0$ be a transversely Hermitian metric on $(M,\mc F_0)$. For any fixed point $x \in M$, we define a continuous function $f_{x}(t,[\tau])$ on $U_{\epsilon} \times Y_{x}$ by
		\begin{equation}\label{definition f}
			f_{x}(t,[\tau]):=\frac{	\Omega_{t}(x) \wedge \sigma_{q} e^{\iota_\phi}(\tau) \wedge e^{\iota_{\bar\phi}} (\bar\tau)}{|\tau|_{\omega_0(x)}^{2} \cdot \omega_0(x)^{r}},
		\end{equation}
		where $Y_{x}=\left.\rho(G(q, r))\right|_{x} \subset \mathbb P\left(\bigwedge^{q,0}(\mc N^*\mc F_0)_x\right)$ is compact.  By assumption, $$f_{x}(0,[\tau])=\frac{	\Omega_{0}(x) \wedge \sigma_{q} \tau\wedge \overline\tau }{|\tau|_{\omega_0(x)}^{2} \cdot \omega_0(x)^{r}}>0.
		$$
		Hence, by the continuity of $f_{x}(0,[\tau])$ on $t$ and $[\tau]$ plus the compactness of $Y_x$, there exists a constant $\epsilon_x>0$ depending only on $x$, such that
		$$\label{12}
		f(x,\overline{U}_{\epsilon_x/2},Y_x):=f_{x}(\overline{U}_{\epsilon_x/2},Y_x)>0.
		$$
		
		Let   $\{T_j\}_{j\in J}$ be trivializing adapted covering of $M$,
		%$(\bigwedge^{q, 0}\mathcal N\mathcal F ^{*})_0$
		and choose any $x\in T_j$ for some $j$. So we can identify $Y_x$ and $Y_y$ for any point $y\subset T_j$, and $f_y$ is defined on $Y_x$.
		Similarly,   by the continuity of $f$ on $\overline{U}_{\epsilon_x/2}\times Y_x$, there exists an open neighborhood $R_x\subset T_j$ of $x$ such that
		$$
		f(R_x, \overline{U}_{\epsilon_x/2},Y_x)>0.
		$$

		Since $M$ is compact, one gets a finite open covering $R_{x_i}$, $i=1,\cdots, m$, $M=\mathop\bigcup_{i=1}^m R_{x_i}$. Set
		$$\varepsilon:=\min_{1\leq i\leq m}\epsilon_{x_i}/2>0.$$
		Then
		$$
		f(x,\overline{U}_{\varepsilon},Y_x)=f_x(\overline{U}_{\varepsilon},Y_x)>0
		$$
		for any $x\in M$.

		Therefore,  by virtue of  the definition \eqref{definition f},
		for any $|t|\leq \varepsilon$, we have
		$$
		\Omega_{t}(x) \wedge \sigma_{q} e^{\iota_\phi}(\tau) \wedge e^{\iota_{\bar\phi}} (\bar\tau)=f_{x}(t,[\tau])|\tau|^2_{\omega_0(x)}\cdot \omega_0(x)^r>0
		$$
		for any nonzero decomposable $\tau\in \bigwedge^{q,0}(\mc N^*\mc F_0)_x$. This is the desired inequality (\ref{pkahler foliation transverse}).
		
	\end{proof}
	
	With these preparations, to prove Theorem \ref{FOLiation Pkahler}, it suffices to prove the special case $p=q$ of the following theorem.
	\begin{theorem}\label{FOliation Mild}
		Let $\left\{\mc F_t\right\}_{t\in \Delta_{\epsilon}}$ be a smooth family of transversely Hermitian structures on a compact  foliated manifold $(M,\mc F)$ with fixed differentiable type, parametrized by a small complex disc. If   $\mathcal{F}_{0}$  satisfies the $(p,q+1)$-th and $(q,p+1)$-th mild $\p\b{\p}$-properties, then there is a $d$-closed $(p,q)$-basic form $\Omega(t)$ on $(M,\mc F_t)$ depending smoothly on $t$ with $\Omega(0)=\Omega_0$ for any  $d$-closed $\Omega_0\in A^{p,q}(M/\mc F)$.
	\end{theorem}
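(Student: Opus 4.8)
The plan is to adapt, almost verbatim, the four-step power series argument used in the proof of Theorem \ref{main p,q deformation thm} to the foliated setting, replacing the exponential operator $e^{\iota_\varphi}$ by the foliated extension map $e^{\iota_\phi|\iota_{\overline\phi}}$ introduced in Step \ref{foliation step 1}, and replacing the Bott--Chern Green's operator $\G_{BC}$ by its basic counterpart $\G^b_{BC}$ from Lemma \ref{lemma-ddb-eq-fol}. First I would transfer the problem to a system of differential equations. Writing $\Omega(t)\in A^{p,q}(M/\mc F)$ and using the analogue of Proposition \ref{main1} for the foliated operator $e^{\iota_\phi|\iota_{\overline\phi}}$ together with the relation \eqref{aaaaaa} (valid in the basic setting by the same local computation), one finds that $e^{\iota_\phi|\iota_{\overline\phi}}(\Omega(t))$ is $d$-closed on $(M,\mc F_t)$ if and only if $\Omega(t)$ solves a coupled system of the shape
\begin{equation*}
\begin{cases}
\p\Omega = \bp(\phi\l\text{-correction terms}),\\
\bp\Omega = \p(\overline\phi\l\text{-correction terms}),
\end{cases}
\end{equation*}
coming from separating \eqref{aaaaaa} into $(p,q)$- and conjugate pieces; here both a $(p,q+1)$- and a $(q,p+1)$-type obstruction appear, which is exactly why Theorem \ref{FOliation Mild} asks for \emph{both} the $(p,q+1)$-th and the $(q,p+1)$-th mild $\p\bp$-lemmata (contrast this with Theorem \ref{main p,q deformation thm}, where $e^{\iota_\varphi}$ maps into a filtration and only one type of obstruction survives). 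This is where Lemma \ref{slvdb-1} is the natural tool.

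Next I would set up and solve the integral equation. Mimicking Step \ref{step 2}, I would consider the basic integral equation
\begin{equation*}
\Omega + \p(\p\bp)^*\G^b_{BC}\p(\phi(t)\l\Omega) + (\text{conjugate term with }\overline\phi) = \Omega_0,
\end{equation*}
write the right-hand side as $(\mathrm I - \mathcal Q_{\phi(t)})\Omega = \Omega_0$ for a bounded linear operator $\mathcal Q_{\phi(t)}$ on the completion $\mathscr E$ of $(A^{p,q}(M/\mc F),\|\cdot\|_{k+\alpha})$ — norms defined via \eqref{Holder foliation} — and invoke the standard elliptic estimates for the \emph{transversely elliptic} basic Bott--Chern Laplacian (Construction \ref{CONSTRUCTION}, Remark \ref{transformation}, and the transfer to an honest elliptic operator $\widetilde{\square^b_{BC}}$ on the basic manifold $W$) to conclude $\|\mathcal Q_{\phi(t)}\|<1$ for $|t|\le\epsilon$. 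Then Neumann series gives the unique solution $\Omega(t)=\sum_{k\ge 0}\mathcal Q_{\phi(t)}^k\Omega_0$, and since $\phi(t)$ from \eqref{kappa local} is holomorphic in $t$, so is $\Omega(t)$; elliptic regularity for transversely elliptic operators (again via $W$) gives smoothness in the transverse variables, hence in all of $(z,t)$.

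The third step is the power-series induction showing that the solution $\Omega(t)$ actually satisfies the differential system, not merely the integral equation. Expanding $\phi=\sum_{i\ge1}\phi_it^i$, $\Omega=\sum_{j\ge0}\Omega_jt^j$, using the foliated integrability $\bp\phi_k=\tfrac12\sum_{a+b=k}[\phi_a,\phi_b]$ and the foliated Tian--Todorov identity (Lemma \ref{aaaa}, which holds verbatim for basic forms), one shows by induction on $k$ that $\H^b_{BC}$ annihilates the relevant $\p$-exact error term at each order — the computation is the exact basic-form analogue of \eqref{dbarmu1=0}--\eqref{harmonic-partial}, using \eqref{bc-ker}-type characterization $\ker\square^b_{BC}=\ker\p\cap\ker\bp\cap\ker(\p\bp)^*$ and the mild $\p\bp$-lemma hypotheses to kill the harmonic projections. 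Finally, Step \ref{foliation step 4}: once $d(e^{\iota_\phi|\iota_{\overline\phi}}(\Omega(t)))=0$ and $e^{\iota_\phi|\iota_{\overline\phi}}$ preserves bidegree $(p,q)$ on $(M,\mc F_t)$, the form $\Omega(t):=e^{\iota_\phi|\iota_{\overline\phi}}(\Omega(t))$ (reusing the name, now on $\mc F_t$) is the desired $d$-closed $(p,q)$-basic form with $\Omega(0)=\Omega_0$, and Theorem \ref{FOLiation Pkahler} then follows by taking $p=q$, applying the preceding proposition on deformation openness of transverse positivity to $\Omega(t)$, and noting that $e^{\iota_\phi|\iota_{\overline\phi}}$ sends the transverse positive representative to a transverse positive form. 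The main obstacle I anticipate is not any single estimate but the bookkeeping needed to confirm that all the Hermitian-manifold machinery — Hodge decomposition, commutation of $\G_{BC}$ with $(\p\bp)(\p\bp)^*$, the explicit $L^2$-minimal solution of Lemma \ref{ddbar-eq}, the regularity and norm estimates — genuinely transfers to the transversely elliptic basic complex; this is legitimate because of El Kacimi Alaoui's reduction of (strongly) transversely elliptic basic operators to genuinely elliptic operators on the compact basic manifold $W$ (Remark \ref{transformation}), but one must check that the operators $\square^b_{BC}$, $\square^b_A$ are \emph{strongly} transversely elliptic so that this reduction applies, and that the homological orientability used implicitly for duality and for the adjoints in Remark \ref{ADJOINT}/Remark \ref{homologic1} is available or harmless here.
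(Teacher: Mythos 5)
Your outline follows essentially the same route as the paper's proof: work with the foliated extension map $e^{\iota_{\phi}|\iota_{\overline{\phi}}}$, reduce the $d$-closedness of $e^{\iota_{\phi}|\iota_{\overline{\phi}}}(\Omega)$ to a coupled $\p$/$\bp$ system whose solvability is exactly Lemma \ref{slvdb-1} (this is indeed why both the $(p,q+1)$-th and $(q,p+1)$-th mild $\p\bp$-lemmata are assumed), solve it order by order with the canonical solutions built from $\G^b_{BC}$ (Lemma \ref{lemma-ddb-eq-fol}, Proposition \ref{formal-s}), kill the obstructions inductively, and obtain convergence and regularity through the reduction of transversely elliptic basic operators to genuinely elliptic operators on the basic manifold $W$; your concern about homological orientability is settled in Remark \ref{Non}: it is not needed, since the scalar product of Construction \ref{CONSTRUCTION} together with the mean-curvature correction of Remark \ref{ADJOINT} suffices. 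Two details of your sketch are off, though neither derails the strategy. First, $\Omega(t)$ is \emph{not} holomorphic in $t$: the obstruction equations involve $\overline{\phi}$, and the family $\{\mc F_t\}$ is merely smooth of fixed differentiable type, so the correct ansatz is a power series in $t$ \emph{and} $\overline{t}$ (the paper expands $\alpha_k=\sum_{i+j=k}\alpha_{i,j}t^i\overline{t}^j$), and only smooth dependence is claimed or obtained; your expansion $\Omega=\sum_j\Omega_j t^j$ would miss the $\overline{t}$-terms generated by $\overline{\phi}$. Second, your integral equation is too naive: the $d$-closedness condition is derived not from \eqref{aaaaaa} (which only computes $\bp_t$ of the extension) but from Proposition \ref{foliation main1} applied to the factorization \eqref{2.1.1}, yielding the full system \eqref{FOL obstruction}, which is then reduced to the two equations \eqref{1.6} for $\widetilde{\Omega}=(\1-\overline{\phi}\phi)\Finv\Omega$; the canonical solution of Proposition \ref{formal-s} contains both a $\db(\p\db)^*\G^b_{BC}\db$-piece and a $\p(\p\db)^*\G^b_{BC}\p$-piece applied to the full contraction towers in $\phi$ and $(\1-\overline{\phi}\phi)^{-1}\overline{\phi}$, so the correct fixed-point formulation is $(\1-F)\widetilde{\Omega}=\Omega_0$ as in \eqref{global-op}, from which the H\"older estimate \eqref{estimate-omeg} and the Douglis--Nirenberg interior estimates (with the cut-off functions in $t$) give convergence and joint smoothness, in place of your Neumann-series-plus-holomorphy argument.
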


	Next we will sketch the proof of Theorem \ref{FOliation Mild} by adopting the power series idea used in \cite[Section 4]{RWZ19}. Here we just indicate the main procedures emphasizing how we adapt the method to our new setting.
	
	\begin{step}\label{foliation step 3}
		{\rm\textbf{Obstruction equation and construction of power series.}}\end{step}
	
	As both $e^{ {\iota}_{(\1-\b{ \phi} \phi)^{-1}\b{ \phi}}}$ and
	$e^{ {\iota}_{ \phi}}$ are invertible operators when $t$ is
	sufficiently small by (\ref{kappa local}), it follows that for any $\Omega \in
	A^{p,q}(M/\mc F)$,
	\begin{align}\label{2.1.1}
		e^{ {\iota}_{ \phi}| {\iota}_{\b{ \phi}}}(\Omega)=e^{ {\iota}_{ \phi}}\circ e^{ {\iota}_{(\1-\b{ \phi} \phi)^{-1}\b{ \phi}}}\circ e^{- {\iota}_{(\1-\b{ \phi} \phi)^{-1}\b{ \phi}}}\circ e^{- {\iota}_{ \phi}}\circ e^{ {\iota}_{ \phi}| {\iota}_{\b{ \phi}}}(\Omega).
	\end{align}
	Here we follow the notations:
	$\overline{ \phi}  \phi =  \phi \lc \overline{ \phi}$, $\1$ is the identity operator defined as:
	\[
	\mathbf{1} = \frac{1}{p+q} \Bigl( \sum_1^r dz^i \otimes \frac{\partial}{\partial z^i} + \sum_1^r d\bar{z}^i \otimes \frac{\partial}{\partial \bar{z}^i} \Bigr)
	\]
	when it acts on $(p,q)$-basic forms, and likewise for others.
	Set
	\begin{align}\label{2.1.2}
		\widetilde{\Omega}=e^{- {\iota}_{(\1-\b{ \phi} \phi)^{-1}\b{ \phi}}}\circ e^{- {\iota}_{ \phi}}\circ e^{ {\iota}_{ \phi}| {\iota}_{\b{ \phi}}}(\Omega)=(\1-\b{ \phi} \phi)\Finv\Omega,
	\end{align}
	where $\Omega$ and $\widetilde{\Omega}$ are apparently one-to-one
	correspondence. Here the notation $\Finv$  denotes the
	simultaneous contraction. By observing  the local expression  for any element in $ A^{0,1}(M/\mc F,\mc N\mc F^{1,0})$ (see (\ref{foliation explicit})), we have:
	
	\begin{proposition}\label{foliation main1}
		Let $\phi\in A^{0,1}(M/\mc F,\mc N\mc F^{1,0})$ on $(M,\mc F)$. Then on the space $A^{\bullet,\bullet}(M/\mc F)$,
		$$ e^{- \iota_{\phi}}\circ d\circ e^{ \iota_\phi}=d-\sL_\phi^{1,0}+\iota_{\bp\phi-\frac{1}{2}[\phi,\phi]},$$
		where $\sL_\phi^{1,0}:=\iota_\phi\p-\p \iota_\phi$ is the Lie derivative. \end{proposition}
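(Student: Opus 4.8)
The statement is the exact foliated counterpart of Proposition \ref{main1}, so the plan is to argue that the identity (\ref{ext-old}) is local in the transverse directions and that the basic complex of $\mc F$ is, in an adapted chart, nothing but the transverse holomorphic model equipped with the same contraction and differential operators; hence the proof of Proposition \ref{main1} transfers word for word. First I would recall how (\ref{ext-old}) is obtained in \cite{LRY15,RZ18}: since $\iota_\phi$ is an even derivation of the exterior algebra and strictly lowers the holomorphic degree, the conjugation $e^{-\iota_\phi}\circ d\circ e^{\iota_\phi}$ is a \emph{finite} sum $\sum_{k\ge 0}\frac{(-1)^k}{k!}(\mathrm{ad}_{\iota_\phi})^k(d)$, and one evaluates the graded commutators $[d,\iota_\phi]$ and $[[d,\iota_\phi],\iota_\phi]$ by a direct computation with local coordinate expressions, the linear term producing $-\sL_\phi^{1,0}+\iota_{\bp\phi}$ and the quadratic term producing $\iota_{-\frac{1}{2}[\phi,\phi]}$ via the Tian--Todorov-type identity of Lemma \ref{aaaa}.

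Next I would set up the local picture on $(M,\mc F)$. In an adapted chart a basic form of pure type $(p,q)$ is given by (\ref{foliation explicit}), i.e.\ a $(p,q)$-form in the transverse holomorphic coordinates $z^1,\dots,z^r$ with basic-function coefficients, while $\phi\in A^{0,1}(M/\mc F,\mc N\mc F^{1,0})$ has the local form (\ref{kappa local}), with basic coefficients and contracting only the transverse directions $\p/\p z^i$. The operators $d=\p+\bp$, $\iota_\phi$, $\sL_\phi^{1,0}=\iota_\phi\p-\p\iota_\phi$, and $\iota_{\bp\phi-\frac{1}{2}[\phi,\phi]}$ then all preserve basicness and act only on the transverse part, and $\bp\phi$, $[\phi,\phi]$ are well-defined basic sections of $\bigwedge^{0,2}(\mc N^*\mc F^{\mb C})\otimes\mc N\mc F^{1,0}$ since $\mc N\mc F^{1,0}$ is a transversely holomorphic, holonomy-invariant bundle (as recalled from \cite{EKAN89}), their local components being literally $\p_{\bar k}\phi^i_{\bar j}$ and the transverse Schouten bracket, i.e.\ the same formulae as in the complex case. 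Therefore, testing the asserted identity on an arbitrary basic $(p,q)$-form in such a chart reduces exactly to the model computation on $\mb C^r$, which is Proposition \ref{main1}; since both sides are globally defined operators on $A^{\bullet,\bullet}(M/\mc F)$, chart-independence then gives the identity.

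The only real point to watch is that every operator occurring along the way sends basic forms to basic forms, so that the passage to the local transverse model is legitimate; this is precisely what the observations above on $\mc N\mc F^{1,0}$ and on $\p,\bp,\iota_\phi$ acting transversally provide, and it is the foliated feature that has no analogue in \cite{LRY15,RZ18}. Once this is in place there is nothing further to compute, and I would present the proof compactly: reduce to an adapted chart, note that $(A^{\bullet,\bullet}(M/\mc F),\p,\bp,\iota_\phi)$ is locally the transverse holomorphic model, and invoke Proposition \ref{main1} together with the basic Tian--Todorov identity.
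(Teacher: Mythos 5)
Your proposal is correct and follows essentially the same route as the paper: the paper proves Proposition \ref{foliation main1} simply by observing that, in adapted coordinates, basic $(p,q)$-forms and $\phi\in A^{0,1}(M/\mc F,\mc N\mc F^{1,0})$ have exactly the local expressions \eqref{foliation explicit} and \eqref{kappa local} of the complex-manifold case, so the identity of Proposition \ref{main1} carries over verbatim. You merely spell out this reduction (locality in the transverse directions, preservation of basicness by $\p$, $\bp$, $\iota_\phi$, $\bp\phi$, $[\phi,\phi]$) in more detail, which is consistent with the paper's argument.
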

	Together with
	\eqref{2.1.1} and \eqref{2.1.2}, Proposition \ref{foliation main1} implies that
\begin{align*}
	\begin{split}
		d\bigl(e^{\iota_{\phi} | \iota_{\bar{\phi}}}(\Omega)\bigr)
		&= d \circ e^{\iota_{\phi}} \circ e^{\iota_{(\mathbf{1} - \bar{\phi} \phi)^{-1} \bar{\phi}}}(\widetilde{\Omega}) \\
		&= e^{\iota_{\phi}} \Bigl( \bar{\partial}_{\phi} A_0 + \sum_{k=0}^{+\infty} \bigl( \partial A_k + \bar{\partial}_{\phi} A_{k+1} \bigr) \Bigr).
	\end{split}
\end{align*}
	where $$A_k:=\frac{{\iota}^k_{(\1-\b{ \phi} \phi)^{-1}\b{ \phi}}}{k!}(\widetilde{\Omega})$$  is a $(p+k,q-k)$-basic form and $$\b{\p}_{ \phi}:=\b{\p}+[\p, {\iota}_ \phi].$$
	Thus, $d(e^{ {\iota}_{ \phi}| {\iota}_{\b{ \phi}}}(\Omega))=0$ amounts to
	\beq\label{FOL obstruction}
	\b{\p}_{ \phi}A_0=0,\quad \p A_k+\b{\p}_{ \phi}A_{k+1}=0,\quad k=0,1,2,\ldots.
	\eeq
	Following  the proof given in \cite[Propositions 4.2, 4.5]{RWZ21}, one sees that (\ref{FOL obstruction})  can be reduced to the following one with only two equations:
	\begin{equation}\label{1.6}
		\begin{cases}
			\sum_{k=0}^{\infty}\Bigl(\b{\p}\circ\frac{ {\iota}^{k-1}_{ \phi}}{(k-1)!}+\p\circ\frac{ {\iota}^{k}_{ \phi}}{k!}\Bigr)\circ\frac{ {\iota}^{k}_{(\1-\b{ \phi} \phi)^{-1}\b{ \phi}}}{k!}(\widetilde{\Omega})=0,\\
			\sum_{k=0}^{\infty}\Bigl(\b{\p}\circ\frac{ {\iota}^{k-1}_{ \phi}}{(k-1)!}+\p\circ\frac{ {\iota}^{k}_{ \phi}}{k!}\Bigr)\circ\frac{ {\iota}^{k-1}_{(\1-\b{ \phi} \phi)^{-1}\b{ \phi}}}{(k-1)!}(\widetilde{\Omega})=0.
		\end{cases}
	\end{equation}
	
	Rewrite (\ref{1.6}) as
	$$
	\begin{cases}
		\p\widetilde{\Omega}'=-\b{\p}\sum_{k=1}^{\infty}\frac{ {\iota}^{k-1}_{ \phi}}{(k-1)!}\circ\frac{ {\iota}^{k}_{(\1-\b{ \phi} \phi)^{-1}\b{ \phi}}}{k!}(\widetilde{\Omega}),\\
		\b{\p}\widetilde{\Omega}'=-\p \sum_{k=0}^{\infty} \frac{ {\iota}^{k+1}_{ \phi}}{(k+1)!}\circ\frac{ {\iota}^{k}_{(\1-\b{ \phi} \phi)^{-1}\b{ \phi}}}{k!}(\widetilde{\Omega}),
	\end{cases}
	$$
	where
	$$\widetilde{\Omega}':=\widetilde{\Omega}+\sum_{k=1}^{\infty}\frac{ {\iota}^{k}_{ \phi}}{k!}\circ\frac{ {\iota}^{k}_{(\1-\b{ \phi} \phi)^{-1}\b{ \phi}}}{k!}(\widetilde{\Omega}).$$
	By Lemma \ref{ss}, we need to prove
	\begin{align}\label{2.1}
		\begin{cases}
			\p\b{\p}\sum_{k=1}^{\infty}\frac{ {\iota}^{k-1}_{ \phi}}{(k-1)!}\circ\frac{ {\iota}^{k}_{(\1-\b{ \phi} \phi)^{-1}\b{ \phi}}}{k!}(\widetilde{\Omega})
			=0,\\
			\p\b{\p}\sum_{k=0}^{\infty} \frac{ {\iota}^{k+1}_{ \phi}}{(k+1)!}\circ\frac{ {\iota}^{k}_{(\1-\b{ \phi} \phi)^{-1}\b{ \phi}}}{k!}(\widetilde{\Omega})=0,
		\end{cases}
	\end{align}
	at the $(N+1)$-th order  if it has not been exceeding for the orders $N$.  Analogous to \cite[p. 471]{RWZ19}, we write the power series $\alpha(t)$
	$$\alpha(t)=\sum_{k=0}^{\infty}\sum_{i+j=k}\alpha_{i,j}t^i\b{t}^j,$$
	of (bundle-valued)  $(p,q)$-basic forms as
	\[ \begin{cases}
		\alpha(t) = \sum^{\infty}_{k=0} \alpha_k, \\[4pt]
		\alpha_k = \sum_{i+j=k} \alpha_{i,j}t^i \overline{t}^j,
	\end{cases} \]
	where $\alpha_k= \sum_{i+j=k} \alpha_{i,j}t^i \overline{t}^j$ is the $k$-order homogeneous part in the expansion
	of $\alpha(t)$ and all $\alpha_{i,j}$ are smooth (bundle-valued)
	$(p,q)$-basic forms on $\mc F_0$ with $\alpha(0)=\alpha_{0,0}$.
	
Indeed, one can prove \eqref{2.1} by repeating the arguments in \cite[p. 476]{RWZ19}. Using Lemma \ref{ss}, we obtain a formal solution of \eqref{1.6} by induction,
	\begin{equation}\label{express-tO}
		\begin{aligned}
			\widetilde{\Omega}_l
			&=-\Big(\sum_{k=1}^{\min\{q,r-p\}}\frac{ {\iota}^{k}_{ \phi}}{k!}\circ\frac{ {\iota}^{k}_{(\1-\b{ \phi} \phi)^{-1}\b{ \phi}}}{k!}(\widetilde{\Omega})\Big)_l
			-\db(\p\db)^*\G^\mathrm b_{\mathrm {BC}}\db\Big(\sum_{k=1}^{\min\{q,r-p\}}\frac{ {\iota}^{k-1}_{ \phi}}{(k-1)!}\circ\frac{ {\iota}^{k}_{(\1-\b{ \phi} \phi)^{-1}\b{ \phi}}}{k!}(\widetilde{\Omega})\Big)_l\\
			&\quad +\p(\p\db)^*\G^\mathrm b_{\mathrm {BC}}\p\Big(\sum_{k=0}^{\min\{q,r-p\}} \frac{ {\iota}^{k+1}_{ \phi}}{(k+1)!}\circ\frac{ {\iota}^{k}_{(\1-\b{ \phi} \phi)^{-1}\b{ \phi}}}{k!}(\widetilde{\Omega})\Big)_l.
		\end{aligned}
	\end{equation}

	\begin{step}\label{foliation step 4}
		{\rm\textbf{Regularity argument.}}\end{step}
	From the induction expression \eqref{express-tO}, one obtains the formal expression of $\widetilde{\Omega}:$
	\begin{equation}\label{omexp}
		\begin{aligned}
			\widetilde{\Omega}&=-\sum_{k=1}^{\min\{q,r-p\}}\frac{ {\iota}^{k}_{ \phi}}{k!}\circ\frac{ {\iota}^{k}_{(\1-\b{ \phi} \phi)^{-1}\b{ \phi}}}{k!}(\widetilde{\Omega})
			-\db(\p\db)^*\G_{\mathrm {BC}}^\mathrm b\db\sum_{k=1}^{\min\{q,r-p\}}\frac{ {\iota}^{k-1}_{ \phi}}{(k-1)!}\circ\frac{ {\iota}^{k}_{(\1-\b{ \phi} \phi)^{-1}\b{ \phi}}}{k!}(\widetilde{\Omega})\\
			&\quad +\p(\p\db)^*\G_{\mathrm {BC}}^\mathrm b\p\sum_{k=0}^{\min\{q,r-p\}} \frac{ {\iota}^{k+1}_{ \phi}}{(k+1)!}\circ\frac{ {\iota}^{k}_{(\1-\b{ \phi} \phi)^{-1}\b{ \phi}}}{k!}(\widetilde{\Omega})
			+\Om_0.
		\end{aligned}
	\end{equation}
	Set
\[
\begin{aligned}
	F =\ & \partial(\partial\bar{\partial})^* \mathbb{G}_{\mathrm{BC}}^{\mathrm{b}} \partial \sum_{k=0}^{\min\{q,r-p\}} \frac{\iota_{\phi}^{k+1}}{(k+1)!} \circ \frac{\iota_{(\mathbf{1} - \bar{\phi} \phi)^{-1} \bar{\phi}}^k}{k!}
	- \sum_{k=1}^{\min\{q,r-p\}} \frac{\iota_{\phi}^k}{k!} \circ \frac{\iota_{(\mathbf{1} - \bar{\phi} \phi)^{-1} \bar{\phi}}^k}{k!} \\
	& - \bar{\partial}(\partial\bar{\partial})^* \mathbb{G}_{\mathrm{BC}}^{\mathrm{b}} \bar{\partial} \sum_{k=1}^{\min\{q,r-p\}} \frac{\iota_{\phi}^{k-1}}{(k-1)!} \circ \frac{\iota_{(\mathbf{1} - \bar{\phi} \phi)^{-1} \bar{\phi}}^k}{k!}.
\end{aligned}
\]
	and write
	\begin{equation}\label{global-op}
		\Om_0=(\1-F)\widetilde{\Omega}.
	\end{equation}
	We claim that $\widetilde{\Omega}(t)$ converges in H\"older norm \eqref{Holder foliation} as $t\rightarrow 0$. Here, we resort to  Construction \ref{CONSTRUCTION} and Remark \ref{transformation} to get two transversely elliptic estimates for a basic form $\psi$,
	$$
	\|\overline{\partial}^*\psi\|_{k-1, \alpha}\leq C_1\|\psi\|_{k,
		\alpha}\quad\text{and}\quad
	\|\G_{\mathrm {BC}}^\mathrm b\psi\|_{k, \alpha}\leq C_{k,\alpha}\|\psi\|_{k-4, \alpha},
	$$
	where $k>3$ and $C_{k,\alpha}$ depends on only on $k$ and $\alpha$,
	not on $\psi$. As $ \phi(t)$ converges smoothly to zero as $t\rightarrow 0$,
	one estimates  by \eqref{global-op},
	\begin{equation*}\label{estimate-omeg}
		\| \Om_0\|_{k, \alpha}\geq
		(1-\epsilon_{k,\alpha})
		\|\widetilde{\Omega} \|_{k, \alpha},
	\end{equation*}
	where $0<\epsilon_{k,\alpha}\ll 1$ is some constant depending on $k,\alpha$.

	Finally, we proceed to the regularity of $\widetilde{\Omega}(t)$ since there is possibly no uniform
	lower bound for the convergence radius obtained as above in the $C^{k,\alpha}$-norm when $k$ converges to $+\infty$. Similarly, we can transfer the world of transversely elliptic theory to the one of elliptic theory in the ordinary sense as aforementioned. So our argument also lies heavily in the elliptic estimates \cite[Appendix  8]{Ko86}, \cite{DN55} and also \cite[Subsection 3.2]{RWZ21}.
	Without loss of generality, we just consider the equation:
	$$\label{reg-eqn-1}
	\begin{aligned}
		\square_{\bp}^\mathrm b\widetilde{\Omega}&=-\square^\mathrm b_{\bp}\sum_{k=1}^{\min\{q,r-p\}}\frac{ {\iota}^{k}_{ \phi}}{k!}\circ\frac{ {\iota}^{k}_{(\1-\b{ \phi} \phi)^{-1}\b{ \phi}}}{k!}(\widetilde{\Omega})
		-\db \db^*\db(\p\db)^*\G_{\mathrm {BC}}^\mathrm b\db\sum_{k=1}^{\min\{q,r-p\}}\frac{ {\iota}^{k-1}_{ \phi}}{(k-1)!}\circ\frac{ {\iota}^{k}_{(\1-\b{ \phi} \phi)^{-1}\b{ \phi}}}{k!}(\widetilde{\Omega})\\
		&\quad +\square_{\bp}^\mathrm b\p(\p\db)^*\G_{\mathrm {BC}}^\mathrm b\p\sum_{k=0}^{\min\{q,r-p\}} \frac{ {\iota}^{k+1}_{ \phi}}{(k+1)!}\circ\frac{ {\iota}^{k}_{(\1-\b{ \phi} \phi)^{-1}\b{ \phi}}}{k!}(\widetilde{\Omega})
	\end{aligned}
	$$
	by applying the basic Dolbeault Laplacian
	$\square^\mathrm b_{\bp}=\db^*\db+\db\db^*$ to the expression formula \eqref{omexp} and omitting the lower-order term $\square^\mathrm b_{\bp} \Om_0$ in this expression.
	
	For each $l=1,2,\cdots$, choose a
	smooth function $\eta^l(t)$ with values in $[0,1]$:
	$$\label{eta-l}
	\eta^l(t)\equiv
	\begin{cases}
		1,\ \text{for $|t|\leq (\frac{1}{2}+\frac{1}{2^{l+1}})\gamma$};\\
		0,\ \text{for $|t|\geq (\frac{1}{2}+\frac{1}{2^{l}})\gamma$},
	\end{cases}
	$$
	where $\gamma$ is a positive constant to be determined.
	Inductively, by Douglis--Nirenberg's
	interior estimates \cite[Theorem 2.3 in the Appendix]{Ko86}, \cite{DN55}, for any $l=1,2,\cdots$,
	$\eta^{2l+1}\widetilde{\Omega}(t)$ is $C^{k+l, \alpha}$, where
	$\gamma$ can be chosen independent of $l$. Since $\eta^{2l+1}(t)$ is
	identically equal to $1$ on $|t|<\frac{\gamma}{2}$ which is independent
	of $l$, $\widetilde{\Omega}(t)$ is $C^{\infty}$ on $(M,\mc F_0)$ with
	$|t|<\frac{\gamma}{2}$.  Then $\widetilde{\Omega}(t)$ can be considered as a real analytic family of $(p,q)$-basic   forms in $t$
	and thus is smooth on $t$.
	
	The proof of  Theorem \ref{FOliation Mild} (and thus  Theorem \ref{FOLiation Pkahler}) is completed.

	\begin{remark}\label{Non} On the assumptions of Theorem \ref{FOLiation Pkahler}, we should notice:
		\begin{enumerate}[(a)]
			\item 	\label{AAA}	 One can conclude from our proof that the homologically orientability assumption used in \cite{EKAG97,Ra21} is not necessary in our approach. In the previous works,  they  follow the original cohomological
			argument used by Kodaira--Spencer, and  it seems that the assumption  in question on $\mc F$ is necessary for the upper semi-continuity theorem for  transversely  elliptic operators  to work (see \cite[Remark 4.6]{Ra21} for example). In our proof,   we directly construct the  desired transversely $p$-K\"ahler form.  We more faithfully follow the scalar product or H\"older norm on the space of basic forms introduced in Construction \ref{CONSTRUCTION} and allow the existence of a correction term involving the mean curvature of the foliation when defining the adjoint operators, see Remarks \ref{ADJOINT}, \ref{homologic1}.
			
			\item  \label{BBB}As shown in Subsection \ref{nonHO}, any transversely K\"ahler foliation, even if not necessarily homologically orientable, satisfies the $\partial\bar{\partial}$-property \eqref{foliation ddbar} and, therefore, obviously fulfills the mild $(1,2)$-th $\partial\bar{\partial}$-property. Thus, compared to the works of \cite{EKAG97,Ra21}, Theorem \ref{FOLiation Pkahler} requires weaker assumptions when $p=1$, specifically, it drops the homologically orientability assumption.
		\end{enumerate}
	\end{remark}
	
	\subsection{Deformation invariance of basic Hodge/Bott--Chern numbers}\label{defor}Recently, Ra\'zny  showed the rigid property of basic Hodge numbers under deformations of homologically orientable transversely Hermitian foliations on a compact manifold when $\mc F_0$ satisfies $\p\bp$-property and the foliations are fixed \cite[Corollary 5.3]{Ra21}. Motivated by this, we study the deformation invariance of basic Hodge/Bott--Chern numbers with `weak' $\p\bp$-properties in this subsection.
	
	The following theorem can be seen as the foliated version of Kodaira--Spencer's results. It seems that  we cannot drop the homologically orientability assumption as already mentioned in Remark
	\ref{Non} (\ref{AAA}).
	\begin{theorem}
		[{\cite[Theorem 5.4]{EKAG97}, \cite[Theorem 4.5]{Ra21}}]\label{ks foliation}
		Let $\left\{\mc F_t\right\}_{t\in U}$ be a smooth family of homologically orientable transversely Hermitian structures on a compact  manifold $M$ with fixed differentiable type, parametrized by an open neighborhood $U$ of $0$ in $\mathbb{R}^{d}$.  Fix any two non-negative integers $p,q$.   Let $D_t: A^{p,q}(M/\mc F_t)\rightarrow A^{p,q}(M/\mc F_t)$ be a family of transversely elliptic operators of even order. Denote $h(t):=\dim \ker(D_t)$. Then $h(t)$ is upper semi-continuous. Furthermore, if $h(t)$ is independent of $t\in U$, then the operators $\mb G_t$ and $\mb H_t$ depend differentiably on $t\in U$. Here, $\mb G_t$ and $\mb H_t$ are the associated Green's operator and harmonic projection operator, respectively.
	\end{theorem}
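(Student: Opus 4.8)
The plan is to reduce Theorem \ref{ks foliation} to the classical Kodaira--Spencer semi-continuity and differentiability results on an \emph{ordinary} compact manifold, by pushing the whole problem onto the basic manifold $W$ via the dictionary set up in Construction \ref{CONSTRUCTION} and Remark \ref{transformation}. Concretely, first I would invoke Proposition \ref{homologic2}-type facts together with the isomorphism $\mathscr B\colon A^{p,q}(M/\mc F_t)\xrightarrow{\sim}C^\infty_{SO(s)}(\widetilde{N_{p,q}})$ of \eqref{!canonical isomophism 2}: under this isomorphism the transversely elliptic operator $D_t$ becomes an honest elliptic operator $\widetilde{D_t}$ of the same (even) order acting on sections of the useful bundle $\widetilde{N_{p,q}}$ over $W$, and $\widetilde{D_t}$ commutes with the compact Lie group action ($SO(s)$ here, or more generally the structure group appearing in the construction). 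Since the homological orientability hypothesis is exactly what makes the scalar product of Construction \ref{CONSTRUCTION} well defined and makes $D_t$ (together with its formal adjoint, hence any even-order operator built from $\p,\bp,\p^*,\bp^*$) genuinely transversely elliptic and formally self-adjoint, the passage to $W$ is legitimate for every $t\in U$ simultaneously, and the family $t\mapsto\widetilde{D_t}$ depends differentiably on $t$ because $\{\mc F_t\}$ has fixed differentiable type and the transversely Hermitian metrics $\Omega_t$ vary differentiably (Proposition \cite[Proposition 4.3]{EKAG97}).

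The key steps, in order, are then: (1) check that $\ker D_t$ on $A^{p,q}(M/\mc F_t)$ corresponds under $\mathscr B$ to $\bigl(\ker\widetilde{D_t}\bigr)^{SO(s)}$, the $SO(s)$-invariant part of the finite-dimensional kernel of the elliptic operator $\widetilde{D_t}$ on the compact manifold $W$; in particular $h(t)=\dim_{\mathbb C}\bigl(\ker\widetilde{D_t}\bigr)^{SO(s)}$. (2) Apply Kodaira--Spencer's upper semi-continuity theorem \cite[Theorem 4]{KS60} (or its standard equivariant refinement) to the differentiable family $\widetilde{D_t}$ of elliptic operators on $W$: $\dim\ker\widetilde{D_t}$ is upper semi-continuous, and taking $SO(s)$-invariants preserves this, so $h(t)$ is upper semi-continuous. (3) Assume $h(t)\equiv h$ is constant on $U$. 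Then by the equivariant version of Kodaira--Spencer's differentiability result (the eigenspace for eigenvalue $0$ varies smoothly precisely when its dimension is locally constant), the harmonic projection $\widetilde{\mathbb H_t}$ onto $\ker\widetilde{D_t}$ and the Green's operator $\widetilde{\mathbb G_t}$ of $\widetilde{D_t}$ depend differentiably on $t$; these commute with the $SO(s)$-action, hence descend to differentiable families of operators on $C^\infty_{SO(s)}(\widetilde{N_{p,q}})$, and transporting back by $\mathscr B^{-1}$ gives that $\mb H_t$ and $\mb G_t$ on $A^{p,q}(M/\mc F_t)$ depend differentiably on $t$. A small point to verify is that the constancy of $h(t)=\dim(\ker\widetilde{D_t})^{SO(s)}$, rather than of $\dim\ker\widetilde{D_t}$ itself, still suffices for smoothness of the \emph{invariant} parts of $\widetilde{\mathbb H_t},\widetilde{\mathbb G_t}$; this follows by decomposing $\ker\widetilde{D_t}$ into isotypic components and noting that the projection $\mb G_t$, $\mb H_t$ we care about only see the trivial isotype.

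The main obstacle I expect is step (3): making the equivariant differentiability argument airtight. The classical statement \cite[Theorem 7]{KS60} is proved for a single elliptic operator without a group action, and one needs its extension to a compact-group-equivariant family where only the dimension of the \emph{invariant} subspace of the kernel is assumed locally constant. The clean way around this is to observe that $D_t$ is transversely elliptic and formally self-adjoint, hence has discrete spectrum on the relevant Sobolev completion, and to run the standard contour-integral (Riesz projection) argument: the spectral projection $\frac{1}{2\pi i}\oint_{|z|=\delta}(z-\widetilde{D_t})^{-1}\,dz$ depends holomorphically/differentiably on $t$ as long as no nonzero eigenvalue crosses the circle $|z|=\delta$, which is guaranteed on a neighbourhood of any given $t_0$ once $h(t)$ is constant; this projection is automatically $SO(s)$-equivariant, and its restriction to the invariant sections gives $\mb H_t$, whence $\mb G_t=(\widetilde{D_t}|_{\text{range}})^{-1}(\1-\mb H_t)$ is differentiable as well. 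Alternatively, one may cite \cite[Proposition 4.3]{EKAG97} and \cite[Theorem 4.5]{Ra21} directly for exactly this transversely elliptic differentiability, since the present theorem is quoted from those sources; but I would at least sketch the Riesz-projection proof so the argument is self-contained, as all the required transverse elliptic regularity estimates are already available through Construction \ref{CONSTRUCTION} and Remark \ref{transformation}.
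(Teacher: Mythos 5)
The paper does not actually prove Theorem \ref{ks foliation}: it is quoted from \cite[Theorem 5.4]{EKAG97} and \cite[Theorem 4.5]{Ra21}, so there is no internal proof to compare yours with. Your outline follows the dictionary the paper itself advertises (Remark \ref{transformation}): transport $D_t$ through $\mathscr{B}$ to an $SO(s)$-equivariant elliptic operator $\widetilde{D_t}$ on the useful bundle over the basic manifold $W$, invoke Kodaira--Spencer's semicontinuity there, and, when $h(t)$ is constant, recover $\mathbb{H}_t$ and $\mathbb{G}_t$ from a Riesz spectral projection restricted to invariant sections. The equivariant refinement you worry about (isotypic decomposition, contour-integral projection) is indeed handled correctly by your argument.

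The genuine gap is the step you dispatch in one sentence, namely that ``the passage to $W$ is legitimate for every $t\in U$ simultaneously'' and that $t\mapsto\widetilde{D_t}$ is a differentiable family of elliptic operators on a \emph{fixed} bundle over a \emph{fixed} $W$. The whole package --- the orthonormal transverse frame bundle $M^{\#}$, the lifted foliation $\mathcal F_t^{\#}$, its leaf closures, the basic manifold $W$, the useful bundle $\widetilde{N_{p,q}}$ and the isomorphism $\mathscr{B}$ of \eqref{!canonical isomophism 2} --- is built from the transverse metric and transverse complex structure, both of which vary with $t$; one must either fix a single auxiliary construction for all $t$ and re-verify ellipticity and self-adjointness with respect to $t$-dependent inner products on a fixed bundle (also identifying the varying spaces $A^{p,q}(M/\mathcal F_t)$ smoothly in $t$), or prove smooth dependence of the entire construction on $t$. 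That is the real technical content of the cited theorem, and it is also where your account of the hypotheses goes astray: the scalar product of Construction \ref{CONSTRUCTION} requires no homological orientability (that hypothesis instead furnishes the alternative inner product on $M$ of Remark \ref{homologic1}, which avoids $W$ altogether), so as written your argument never uses homological orientability --- yet the paper's Remark \ref{Non}(a), following \cite[Remark 4.6]{Ra21}, indicates that precisely this assumption is what makes the known proofs of the semicontinuity/differentiability statement work. Until you either locate where homological orientability enters your argument or justify the smooth-in-$t$ transfer to $W$ without it, the proposal remains an appeal to the cited results rather than a self-contained proof.
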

	
	Consequently, one can repeat the same procedures in Section \ref{section 4}, \cite[Section 3]{RZ18} and \cite[Section 5]{RWZ19} to get:
	\begin{theorem}\label{basic hodge}
		Let $\left\{\mc F_t\right\}_{t\in U}$, $M$, $U$ be as in Theorem \ref{ks foliation} and $\mb S^{p,q},\mathcal B^{p,q}$ be defined for $\mc F_0$ similarly to Definition \ref{MILD}.
		\begin{enumerate}[$(a)$]
			\item If $\mc F_0$ satisfies both
			$\mathbb{B}^{p,q+1}$ and $\mathcal{B}^{p+1,q}$ with
			the deformation invariance of $h^{p,q-1}_{\db_t}(M/\mc F_t)$ established,
			then $h^{p,q}_{\db_t}(M/\mc F_t)$ are independent of $t$.
			\item If $\mc F_0$ satisfies both $\mb S^{p,q+1}$ and  $\mb B^{p+1,q}$ with
			the deformation invariance of $h^{p,q-1}_{\db_t}(M/\mc F_t)$ established,
			then $h^{p,q}_{\db_t}(M/\mc F_t)$ are independent of $t$.
			\item   If $\mc F_0$ satisfies both
			$\mathbb{B}^{p,q+1}$ and $\mathbb{B}^{q,p+1}$ with the deformation invariance of the $(p-1,q-1)$-basic Aeppli number $h^{p-1,q-1}_{\mathrm{A}}(M/\mc F_t)$ established, then the $(p,q)$-basic Bott--Chern number are  independent of $t$.
		\end{enumerate}
	\end{theorem}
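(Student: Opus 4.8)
The plan is to mimic, in the foliated world, the arguments already established for compact complex manifolds in Section~\ref{section 4}, using the transversely elliptic operator theory of Construction~\ref{CONSTRUCTION} and Remark~\ref{transformation} as the substitute for classical elliptic theory, and Theorem~\ref{ks foliation} as the substitute for Kodaira--Spencer's \cite[Theorems 4, 7]{KS60}. The three parts (a), (b), (c) are the basic analogues of Theorem~\ref{inv pq'''''''}, of \cite[Theorem 1.3]{RZ18}, and of a Bott--Chern invariance statement, respectively; I would prove them in that order since each reuses the machinery of the previous one.

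For part (a): first I would reprove the basic version of Theorem~\ref{main p,q deformation thm}. Using Proposition~\ref{foliation main1} one has $e^{-\iota_\phi}\circ d\circ e^{\iota_\phi}=d-\sL^{1,0}_\phi$ for the integrable $\phi(t)\in A^{0,1}(M/\mc F,\mc N\mc F^{1,0})$ coming from the Kuranishi family of \cite{EKAN89} (see Step~\ref{foliation step 1}); comparing bidegrees, $d(e^{\iota_\phi}(\mu))=0$ is equivalent to $\p\mu=0$ and $\bp\mu=-\p(\phi\l\mu)$, exactly as in \eqref{!!}. Then, given a $d$-closed $\mu_0\in A^{p,q}(M/\mc F_0)$ with $\mc F_0$ satisfying $\mathbb B^{p,q+1}$, I would study the integral equation $\mu+\p(\p\bp)^*\G^b_{BC}\p(\phi(t)\l\mu)=\mu_0$ on the completion of $A^{p,q}(M/\mc F_0)$ under the H\"older norm \eqref{Holder foliation}; here $\G^b_{BC}$ is the basic Green's operator of Lemma~\ref{lemma-ddb-eq-fol}, and the transversely elliptic estimates of Construction~\ref{CONSTRUCTION}/Remark~\ref{transformation} give $\|\mathcal Q_{\phi(t)}\|<1$ for small $t$, so a unique solution $\mu(z,t)$ exists, is holomorphic in $t$ and (by transversely elliptic regularity transferred to the basic manifold $W$) smooth. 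Verifying that $\mu(z,t)$ satisfies $\bp\mu=-\p(\phi\l\mu)$ is the induction of Step~\ref{step 3}, which goes through verbatim using the foliated integrability \eqref{int}, Lemma~\ref{aaaa} (valid for $\mc N\mc F^{1,0}$-valued forms by the local expression \eqref{foliation explicit}), and \eqref{bc-ker} for $\square^b_{BC}$. With $\mc F_0$ also satisfying $\mathcal B^{p+1,q}$, Lemma~\ref{lemma-ddb-eq-fol} (replacing Lemma~\ref{lemma1}) gives a canonical $d$-closed representative $\mu_0=-(\p\bp)^*\G^b_{BC}\p\sigma+\sigma$ of each basic Dolbeault class, and the extension map $[\sigma]_{\bp}\mapsto[\rho_\phi(\mu(z,t))]_{\bp_t}$ (with $\rho_\phi$ the foliated analogue of \eqref{ccccc}) is well-defined. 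The injectivity argument of Proposition~\ref{pq-pq-1}---letting $t\to0$ in $\rho_\phi(\mu(z,t))=\bp_t\G_t(\bp_t^*\rho_\phi(\mu(z,t)))$---then applies once Theorem~\ref{ks foliation} guarantees the differentiable dependence of $\G_t$ on $A^{p,q-1}(M/\mc F_t)$, which is exactly the hypothesis that $h^{p,q-1}_{\bp_t}(M/\mc F_t)$ is deformation invariant. Combined with the upper semi-continuity in Theorem~\ref{ks foliation}, this gives (a).

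For part (b): the only change is that $\mathbb B^{p,q+1}$ is weakened to $\mathbb S^{p,q+1}$, but $\mathbb B^{p+1,q}$ is required instead of $\mathcal B^{p+1,q}$. This is the foliated analogue of \cite[Theorem 1.3]{RZ18}; I would follow \cite[Section 3.2]{RZ18}, using the $\bp$-extension map $e^{\iota_\phi|\iota_{\bar\phi}}$ of Step~\ref{foliation step 1} and its obstruction formula (the foliated version of \eqref{aaaaaa}) together with the basic Bott--Chern solvability of $\p\bp$-equations (Lemma~\ref{lemma-ddb-eq-fol}). For part (c): this is the basic Bott--Chern invariance statement; I would first reprove a ``conjugate'' $d$-closed extension of basic Bott--Chern classes, using that $\mathbb B^{p,q+1}$ and $\mathbb B^{q,p+1}$ together with Lemma~\ref{slvdb-1} and Proposition~\ref{formal-s} solve the system \eqref{conjugatequ-1}, and then run the injectivity argument for $H^{p,q}_{BC}(M/\mc F_0)\to H^{p,q}_{BC}(M/\mc F_t)$, where the differentiability of the basic Bott--Chern Green's operator $\G^b_{BC,t}$ on $A^{p-1,q-1}(M/\mc F_t)$ is supplied by Theorem~\ref{ks foliation} precisely under the stated deformation invariance of $h^{p-1,q-1}_A(M/\mc F_t)$ (using the $BC$-$A$ duality of Remark~\ref{homologic2}).

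The main obstacle, as in the $p$-K\"ahler case of Theorem~\ref{FOLiation Pkahler}, is foundational rather than formal: one must check that every analytic ingredient of the complex-manifold proofs---the Hodge decomposition \eqref{bc-hod-foliation}, the commutation relations (1), (2) before Lemma~\ref{ddbar-eq}, the interior $C^{k,\alpha}$ elliptic estimates, and especially the differentiable dependence of Green's and harmonic projection operators on $t$---survives the passage through the basic manifold $W$ and the $SO(s)$-equivariant reduction of Construction~\ref{CONSTRUCTION}, and that the correction terms involving the mean curvature of $\mc F$ (Remarks~\ref{ADJOINT}, \ref{homologic1}) do not spoil the algebraic identities used in Step~\ref{step 3}. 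I expect that, exactly as noted in Remark~\ref{Non}(a), the homological orientability hypothesis used in \cite{EKAG97,Ra21} is \emph{not} needed here because we construct the representatives explicitly rather than invoking a Serre-duality-type argument; but verifying this carefully for the Bott--Chern case (c), where $BC$-$A$ duality genuinely requires homological orientability, is the most delicate point and may force that hypothesis back in for (c) alone.
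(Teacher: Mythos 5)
Your proposal is correct and follows essentially the paper's own route: the paper's proof consists precisely of the observation that the arguments of Section~\ref{section 4}, \cite[Section 3]{RZ18} and \cite[Section 5]{RWZ19} carry over once Theorem~\ref{ks foliation} replaces Kodaira--Spencer's theorems, which is exactly what you spell out in the foliated setting. Your closing worry about homological orientability in part (c) is already resolved by the hypotheses, since the statement of Theorem~\ref{basic hodge} takes the setting of Theorem~\ref{ks foliation}, which assumes homological orientability for all three parts.
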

	
	Notice that  in this subsection we always assume that the foliations are fixed. We should then mention an interesting question proposed by Ra\'zny if the foliations are allowed to vary.  For the case of a  family of Sasakian manifolds which is in particular a family of homologically orientable transversely K\"ahler foliations, Ra\'zny \cite[Theorem 1]{Ra21} affirmed this question (see also an alternative proof based on Vaisman geometry, \cite[Theorem 31.25]{OV22})  which completes earlier results of C. P. Boyer--K. Galicki \cite{BG08} and of O. Goertsches--H. Nozawa--D. T\"oben \cite{GNT16} who proved the invariance for special types of deformations. Note that Ra\'zny's result boosts the study of basic Hodge numbers by showing they can help distinguish between different deformation classes of Sasakian structures, without relying on contact topology, see \cite{KP22} and the references therein for some recent progress. Very recently, Ra\'zny also gave a positive answer to this question for a new class of transversely K\"ahler foliations, cf. \cite{Ra22}.
	
	\begin{question}[{\cite[Question 1.2]{Ra21}}] \label{Razny question}
		Are the basic Hodge numbers rigid under deformations of (homologically orientable)
		transversely K\"ahler foliations on compact manifolds?
	\end{question}
	
	\appendix
	\section{The \texorpdfstring{$p$}{p}-K\"{a}hler structures} \label{p}
	
	In this appendix, we state some basic knowledge about the $p$-K\"{a}hler structures. Let $V$ be a complex $n$-dimensional vector space and  $V^{*}$ its
	dual space, i.e., the space of complex linear functionals
	over $V$. Denote the complexification of the space of the exterior $m$-vectors
	of $V^{*}$ by $\bigwedge^{m}_{\mathbb{C}} V^{*}$, which admits a
	natural direct sum decomposition
	\[ \bigwedge^{m}_{\mathbb{C}} V^{*} = \sum_{r+s=m} \bigwedge^{r,s} V^*, \]
	where $\bigwedge^{r,s} V^*$ is the complex vector space
	of $(r,s)$-forms on $V^*$.
	The case $m=1$ exactly reads \[ \bigwedge^{1}_{\mathbb{C}} V^{*} =
	V^* \bigoplus \overline{V^{*}},\] where the natural isomorphism $V^*
	\cong \bigwedge^{1,0}V^*$ is used. 
    
Let $q\in \{1, \cdots, n\}$ and $p=n-q$. Clearly, the complex dimension
	$N$ of $\bigwedge^{q,0}V^*$ equals to the combination number $C^{q}_n$. After fixing a basis $\{
	\beta_i \}_{i=1}^N$ of the complex vector space $\bigwedge^{q,0}V^*$, we can give
	the canonical Pl\"ucker embedding as in \cite[p.   209]{GH78}
	by
\[
\rho: G(q,n) \hookrightarrow \mathbb{P}(\wedge^{q,0} V^*), \quad s \mapsto [\cdots, s_i, \cdots].
\]
	Here $G(q,n)$ denotes the Grassmannian of $q$-planes in the vector
	space $V^*$ and $\mathbb{P}(\bigwedge^{q,0}V^*)$ is the
	projectivization of $\bigwedge^{q,0}V^*$. %The homogeneous coordinates of $\rho$ are just the determinants $|s_I|$ of all the $q\times q$ minors $s_I$ of a matrix representative of $s$.
	A $q$-plane in $V^*$ can
	be represented by a decomposable $(q,0)$-form $s \in
	\bigwedge^{q,0}V^*$ up to a nonzero complex number, and
	$\{s_i\}_{i=1}^N$ are exactly the coordinates of $s$
	under the fixed basis $\{ \beta_i \}_{i=1}^N$.  \textit{Decomposable
		$(q,0)$-forms} are those forms in $ \bigwedge^{q,0}V^*$ that can be
	expressed as $\gamma_1 \bigwedge \cdots \bigwedge \gamma_q$ with
	$\gamma_i \in V^* \cong \bigwedge^{1,0}V^*$ for $1 \leq i \leq q$.
	The $pq$-dimensional locus $\rho(G(q,n))$ in
	$\mathbb{P}(\bigwedge^{q,0}V^*)$ characterizes the
	decomposable $(q,0)$-forms in $\mathbb{P}(\bigwedge^{q,0}V^*)$.
	
	Now we list several positivity notions (cf. \cite{HK74,Hv77,Dm12} for more details). A $(q,q)$-form $\Theta$ in
	$\bigwedge^{q,q}V^*$ is defined to be  \textit{strictly positive (resp., positive)} if
	\[ \Theta =\sigma_{q}\sum_{i,j=1}^N \Theta_{i\overline{ j}} \beta_i \wedge \b\beta_j,\]
	where $\Theta_{i\overline j}$ is a positive (resp. semi-positive) Hermitian matrix of the size $N \times N$ with $N=C_{n}^q$ under the
	basis $\{\beta_i \}_{i=1}^N$ of the complex vector space $\bigwedge^{q,0}V^*$ and $\sigma_{q}$ is defined to be the constant
	$2^{-q}(\sqrt{-1})^{q^2}$.
	According to this definition, the fundamental form of a Hermitian metric on a complex manifold is actually a strictly positive $(1,1)$-form everywhere. A $(p,p)$-form $\Gamma\in \bigwedge^{p,p}V^*$ is said to be  \textit{strictly weakly positive} (resp.\textit{ weakly positive}) if
	the volume form $$\Gamma\wedge\sigma_{q}\tau\wedge\bar{\tau}$$ is
	strictly positive (resp.  positive)  for every nonzero decomposable $(q,0)$-form $\tau$ of $V^*$, while
	a $(q,q)$-form $\Upsilon\in \bigwedge^{q,q}V^*$ is called  \textit {strongly positive} if
	$\Upsilon$ is a convex combination
	$$\Upsilon=\sum_s\gamma_s \sqrt{-1}\alpha_{s,1}\wedge\bar\alpha_{s,1}\wedge\cdots\wedge\sqrt{-1}\alpha_{s,q}\wedge\bar\alpha_{s,q},$$
	where $\alpha_{s,i}\in V^*$ and $\gamma_s\geq 0$.

As shown in \cite[Chapter III, \S\,1.A]{Dm12}, the sets of weakly positive and strongly positive forms are closed convex cones,
	%denoted by $WP^*$ and $SP^*$, respectively,
	and by definition, the weakly positive cone is dual to the strongly positive cone via the pairing
	$$\bigwedge^{p,p}V^*\times \bigwedge^{q,q}V^*\longrightarrow \mathbb{C}.$$
	Then all weakly positive forms are real.
	
An element $\Xi$ in $\bigwedge^{p,p}V^*$ is called  \textit{transverse}, if
	it is strictly weakly positive. There exist many various names for this
	terminology and we refer to \cite[Appendix]{AB91} for a list.
	
	These positivity notions on complex vector spaces can be extended pointwise to
	complex differential forms on a complex manifold.
	\begin{definition}[{\cite[Definition $1.11$]{AA87}}, for example]\label{pkf}\rm
		For an $n$-dimensional complex manifold $M$ and a positive integer $p\leq n$, $M$ is called a  \textit{$p$-K\"ahler manifold}
		if there exists a  \textit{$p$-K\"ahler form}, that is a $d$-closed transverse $(p,p)$-form on $M$.
	\end{definition}
	
	The readers can refer to \cite{Su76} for more related concepts (such as differential form transversal to the cone structure on a real differentiable manifold) to $p$-K\"ahler structures.

\end{document}